\numberwithin{equation}{section}
\newcommand{\bibhack}[1]{}
\theoremstyle{plain}
\newtheorem*{theorem*}{Theorem}
\newtheorem*{cor*}{Corollary}
\newtheorem*{lem*}{Lemma}
\newtheorem{Th}{Theorem}[section]
\newtheorem{Lemma}[Th]{Lemma}
\newtheorem{Cor}[Th]{Corollary}
\newtheorem{cor}[Th]{Corollary}
 \theoremstyle{definition}
\newtheorem{Def}[Th]{Definition}
\newtheorem{Rem}[Th]{Remark}
\newtheorem{Rmk}[Th]{Remark}
\newtheorem{DefLem}[Th]{Definition and Lemma}
\newtheorem{?}[Th]{Problem}
\newtheorem{Ex}[Th]{Example}
\newtheorem{maintheorem}{Theorem}[]
\newtheorem{maincor}[maintheorem]{Corollary}
\theoremstyle{plain}
\providecommand{\customgenericname}{}
\newcommand{\newcustomtheorem}[2]{%
  \newenvironment{#1}[1]
  {%
   \renewcommand\customgenericname{#2}%
   \renewcommand\theinnercustomgeneric{##1}%
   \innercustomgeneric
  }
  {\endinnercustomgeneric}
}
\newcommand{\Fix}{\operatorname{Fix}}
\newcommand{\codim}{\operatorname{codim}}
\newcommand{\Nu}{\mathcal{V}}
\newcommand{\Ztwo}{\mathbb{Z}_2}
\newcommand{\Zp}{\mathbb{Z}_p}
\newcommand{ \floor }[1]{ \lfloor #1 \rfloor }
\newcommand{\Q}{\mathbb{Q}}
\newcommand{\Z}{\mathbb{Z}}
\newcommand{\R}{\mathbb{R}}
\newcommand{\C}{\mathbb{C}}
\newcommand{\s}[1]{\mathbb{S}^{#1}}
\newcommand{\cp}{\mathbb{CP}}
\newcommand{\CP}[1]{\cp^{\frac{#1}{2}}}
\newcommand{\hp}{\mathbb{HP}}
\newcommand{\HP}[1]{\hp^{\frac{#1}{4}}}
\newcommand{\into}{\xhookrightarrow{}}
\newcommand{\acton}{\curvearrowright}
\newcommand{\fk}{\Fix\ker}
\newcommand{\cayley}{Ca\mathbb{P}^2}
\newcommand{\Hp}[2][*]{H^{#1}(#2, \Zp)}
\newcommand{\Htwo}[2][*]{H^{#1}(#2, \Ztwo)}
\newcommand{\Hz}[2][*]{H^{#1}(#2, \Z)}
\newcommand{\Hq}[2][*]{H^{#1}(#2, \Q)}
\newcommand{\tensor}{\otimes}
\begin{document}

\title[Improved four-periodicity and a conjecture of Hopf]{An improved four-periodicity Theorem and a conjecture of Hopf with symmetry}

\author[J. Nienhaus]{Jan Nienhaus}

 \subjclass[2020]{53C20, 55R40, 55S05, 57S15}

\begin{abstract}
	In the 1930s, H. Hopf conjectured that a closed, even-dimensional manifold of positive sectional curvature has positive Euler characteristic.\\
	We show this under the additional assumption of an isometric $T^4$-action on the manifold, improving from previous theorems of Kennard, Wiemeler and Wilking assuming a $T^5$-action. More specifically, this is achieved by giving a rational cohomology classification of possible fixed point components.\\
	The main new tool is an improvement on the four-periodicity theorem originally developed by Kennard through the use of characteristic class theory.\\
	As a second application we give a rational cohomology classification of closed positively curved even-dimensional manifolds without odd rational cohomology that admit an isometric $T^6$-action.
\end{abstract}
\maketitle

\section{Introduction}
The study of manifolds admitting Riemannian metrics of positive or nonnegative sectional curvature has been of interest since the early days of the field of Riemannian geometry. Despite these many years of study, the only known way to differentiate the class of closed manifolds admitting positive sectional curvature within the ostensibly much larger class of those admitting nonnegative sectional curvature is a restriction on the possible fundamental groups; in particular, there is no known obstruction for nonnegatively curved simply connected closed manifolds for admitting positive sectional curvature.

This lack of obstruction is in stark contrast to the expectation one might have from considering the available examples. While there is an abundance of nonnegatively curved examples, like all compact Lie groups with their bi-invariant metrics and their quotients by closed subgroups, all exotic $7$-spheres and even the connected sums of two compact rank one symmetric spaces, the situation in positive curvature is remarkably different:

In even dimensions, the only known simply connected examples are spheres, projective spaces, the flag spaces of projective planes (due to Wallach \cite{wallach}) and the inhomogeneous biquotient $E^6=SU(3)//T^2$ (found by Eschenburg \cite{Eschenburg1982}). In particular, all known examples above dimension 24 are rank one symmetric spaces. The situation in odd dimension is not much better. While there are examples of low-dimensional families in dimensions 7 (Aloff-Wallach \cite{AloffWallach}, Eschenburg \cite{Eschenburg1982}) and 13 (Berger \cite{Berger}, Bazaikin \cite{Bazaikin}), starting from dimension 15 the only known examples are covered by spheres.\\

There are several conjectures about manifolds admitting positive curvature that are all difficult and open problems. Here, we will focus on the following, which remains unsolved after almost a century:

Back in the early 1930s, H. Hopf conjectured that any positively curved closed even-dimensional Riemannian manifold $M$ has positive Euler characteristic. 

In an attempt to bring new motion into the field of positive curvature, in the early 1990s Karsten Grove started what is now known as the `Grove symmetry program', whose aim is to study manifolds of positive curvature under additional assumptions of symmetry, the hope being that, after a good understanding has been achieved in some symmetric setting, new structure, obstructions, or new examples could be produced by successively weakening the symmetry assumptions.

One such symmetry assumption is requiring a lower bound on the \textit{symmetry rank} of $(M^n, g)$, i.e. on the rank of its isometry group.

The starting point here is a result of Grove and Searle \cite{grovesearle}: The symmetry rank $r$ of $M$ is at most $\floor{\frac{n+1}{2}}$, with equality only if $M$ is diffeomorphic to a sphere, real or complex projective space or lens space.\\

This linear bound could later be improved: Using his new Connectedness Lemma and an induction argument, Wilking \cite{w03} showed that if $M$ is simply connected and $r\ge \frac{n}{4}+1$ (and $n\ne 7$), $M$ is still homotopy equivalent to a sphere or complex or quaternionic projective space.\\

Moving from linear to sublinear bounds took another decade: In 2013, Kennard proved that $\chi(M)$ was positive if $n$ was a multiple of $4$ even if one only assumed $r\ge 2\log_2(n)$. This improvement was made using his four-periodicity theorem to obtain structure on relevant cohomology rings.\\

Moving to the present, in 2021 Kennard, Wiemeler and Wilking together showed that, in fact, one can improve even logarithmic bounds, when they proved $\chi(M)>0$ if $r\ge 5$ in all even dimensions. In particular, they showed

\begin{customthm}{}[\cite{kww}, Theorem A]
Let $(M^n, g)$ be a closed, orientable Riemannian manifold of positive sectional curvature equipped with an effective isometric $T^5$-action, and $F^f$ be a component of the fixed point set $Fix(T^5)$. \\
Then $H^*(F, \Q)$ is, as a ring, that of $\s{f}, \CP{f}$, or $\HP{f}$.
\end{customthm}

This implies the Hopf conjecture for these manifolds since, up to possibly passing to the oriented double cover, $M$ has the same Euler characteristic as the fixed point set $Fix(T^5)$ by a result of Lefschetz, and this fixed point set is non-empty due to a result of Berger.

We prove the following, improving the result from $T^5$ to $T^4$:

\begin{maintheorem}\label{mainfix}
Let $(M^n, g)$ be a closed, orientable Riemannian manifold of positive sectional curvature equipped with an effective isometric $T^4$-action, and $F^f$ be a component of the fixed point set $Fix(T^4)$. \\
Then $H^*(F, \Q)$ is, as a ring, that of $\s{f}, \CP{f}$, or $\HP{f}$.
\end{maintheorem}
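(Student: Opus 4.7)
The plan is to show that $H^*(F, \Q)$ is $4$-periodic on a sufficient range of degrees, and then to invoke a classification of such rings. Since $F$ is a component of the fixed point set of a torus action on a positively curved manifold, by Kleiner--Frankel it is a closed, totally geodesic submanifold of $M$ inheriting positive sectional curvature, while the residual isotropy structure of the $T^4$-action continues to control its normal geometry.

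I would begin by examining the isotropy representation of $T^4$ on the normal space $\nu_p F$ at a point $p \in F$. Its decomposition into weight spaces singles out a collection of circle subgroups $\sigma_i \subset T^4$ whose fixed-point components $N_i$ through $p$ are proper totally geodesic submanifolds of $M$ containing $F$. For each $\sigma_i$, Wilking's Connectedness Lemma gives that the inclusion $N_i \hookrightarrow M$ is highly connected in a range depending on $\codim N_i$; via the Periodicity Theorem this forces rational cohomological periodicity of $N_i$ with period bounded by $\codim N_i$. Moreover, the quotient torus $T^4/\sigma_i$ acts effectively (after a finite cover if necessary) on $N_i$ with $F$ as a fixed-point component, and iterating produces a nested chain of totally geodesic, positively curved submanifolds between $F$ and $M$, each acted on by a torus of decreasing rank and each with periodic rational cohomology.

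The main obstacle is to combine this hierarchy into $4$-periodicity of $H^*(F, \Q)$ itself. In the $T^5$-setting of Kennard--Wiemeler--Wilking, one additional circle subgroup supplies exactly the final piece needed to apply Kennard's original four-periodicity theorem; with only $T^4$ that last circle is absent, and we must instead invoke the improved four-periodicity theorem developed in the present paper, which relaxes one of the numerical connectedness hypotheses by substituting a characteristic-class condition on the normal bundles involved. The delicate step is a case analysis over the possible weight patterns of the isotropy representation on $\nu_p F$ and over the configurations of the $N_i$, verifying in each case that the characteristic-class hypotheses of the improved theorem are satisfied; this is where I expect the real difficulty to lie, particularly when some $\codim N_i$ sits near the threshold of Wilking's lemma.

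Once $4$-periodicity of $H^*(F, \Q)$ has been established on a sufficient range, a classification of such rings for closed orientable manifolds pins the ring structure down to be that of $\s{f}$, $\CP{f}$, or $\HP{f}$, according to whether a chosen periodicity generator $e \in H^4(F, \Q)$ vanishes, is the square of a class in $H^2(F, \Q)$, or is itself a ring generator in degree $4$.
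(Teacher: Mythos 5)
Your overall architecture (totally geodesic fixed point components of circle subgroups, the Connectedness Lemma, the improved four-periodicity theorem applied via the complex structure on the normal bundles coming from the isotropy action) matches the paper's, but the final step contains a genuine gap that is not a technicality. Four-periodicity of the rational cohomology of a closed orientable even-dimensional manifold does \emph{not} pin the ring down to $\s{f}$, $\CP{f}$, or $\HP{f}$: by \cite[Thm 3.1]{kww} together with Poincar\'e duality, the possible outcomes also include the rational cohomology of $S^h\times \hp^k$ with $h\in\{2,3\}$, and these rings are four-periodic. (Your proposed trichotomy on the generator $e\in H^4$ simply misses them.) Excluding these exceptional types is where most of the actual work in the proof lies: one shows that if $F$ were of type $S^h\times\cp$ or $S^h\times\hp$, the same would propagate to the ambient circle fixed point components, placing one in the situation of \cite[Thm 4.1]{kww}; one branch of that alternative is killed by the Grove--Searle classification of fixed-point homogeneous positively curved manifolds, and the surviving branch produces a second fixed point component $F'$ of type $S^h\times\cp^{k\ge 1}$, which is then ruled out by constructing a second $T^2$-action at $F'$ and observing that none of the admissible cohomology types of a circle fixed point component of an $S^h\times\hp$ can contain a maximally connected $S^h\times\cp$ of strictly smaller dimension. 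Without this argument the theorem is not proved.

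A second, smaller gap is the reduction you defer to a ``case analysis over weight patterns.'' To apply the improved periodicity theorem one needs a circle subgroup whose fixed point component has codimension at most half the dimension of the ambient piece; the paper secures this by first passing to the fixed point set of a maximal finite isotropy group, then using the classification of effective $T^4$-isotropy representations without finite isotropy (Table \ref{tab:t4list}) to extract a quotient $T^3$ whose isotropy splits as $S^1\times T^2$ with the $S^1$-weight of \emph{minimal multiplicity}; the minimality is exactly what yields $\dim\fk e_1\ge \tfrac{1}{2}\dim N$. This is not a routine verification, and your sketch gives no mechanism for producing such a splitting circle, nor for handling the finite isotropy groups that obstruct it. Finally, note that the paper does not establish four-periodicity of $H^*(F,\Q)$ itself (which may be very low-dimensional and is treated separately for $f\le 4$); periodicity is established for the intermediate fixed point components $N$ and transferred to $F$ through highly connected inclusions.
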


This implies our contribution towards the Hopf conjecture, 

\begin{customcor}{A$'$}[Hopf-Conjecture with $T^4$-symmetry]\label{hopf}\,\\
Let $(M^n, g)$ be a closed, even-dimensional Riemannian manifold of positive sectional curvature.\\
If $M$ admits an effective isometric $T^4$-action, then $\chi(M)>0$.
\end{customcor}

In order to obtain these improvements, our main new tool will be improvements in the machinery of periodicity originally used by Kennard. In particular, we prove 

\begin{maintheorem}\label{mainper}
Let $(M^n, g)$ be a closed, orientable Riemannian manifold and $N^{n-k}\subset M$ an orientable submanifold with $dim(N)$-connected inclusion map. If
\begin{itemize}
\item $k\le \frac{n}{2}$ and the normal bundle $\Nu(N)$ admits a complex structure, or
\item $k\le \frac{n}{3}$,
\end{itemize}
then $\Hq{M}$ is 4-periodic.
\end{maintheorem}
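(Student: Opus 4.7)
My plan is to first invoke the standard Wilking periodicity machinery to obtain $k$-periodicity of $H^*(M,\mathbb{Q})$, then to exploit characteristic class theory of the normal bundle $\mathcal{V}=\mathcal{V}(N)$ to refine this to $4$-periodicity. Let $i\colon N\hookrightarrow M$ denote the inclusion.

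First, I would use the $\dim(N)$-connectedness of $i$ together with Poincar\'e duality in $M$ and $N$ to produce, via the now-classical Wilking argument, a class $e\in H^k(M,\mathbb{Q})$ whose restriction $i^*(e)\in H^k(N,\mathbb{Q})$ is the Euler class $e(\mathcal{V})$ and for which multiplication $\cdot e\colon H^j(M,\mathbb{Q})\to H^{j+k}(M,\mathbb{Q})$ is surjective for $0\le j<n-2k$ and injective for $0<j\le n-2k$. This is the starting $k$-periodicity on which the refinement builds.

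Next I would lift characteristic classes of $\mathcal{V}$ to $M$. The $(n-k)$-connectedness of $i$ makes $i^*\colon H^j(M,\mathbb{Q})\to H^j(N,\mathbb{Q})$ an isomorphism for $j\le n-k-1$ and injective for $j=n-k$, so any characteristic class of $\mathcal{V}$ of degree at most $n-k$ admits a unique lift to $H^*(M,\mathbb{Q})$. In the complex case with $k\le n/2$ this yields lifts $\tilde c_j\in H^{2j}(M,\mathbb{Q})$ of the Chern classes for $1\le j\le k/2$, with $\tilde c_{k/2}=e$. In the real case with $k\le n/3$ (where we may assume $k$ even, since otherwise $e=0$ rationally) this yields lifts $\tilde p_j\in H^{4j}(M,\mathbb{Q})$ of the relevant Pontryagin classes, together with the relation $e^2=\pm\tilde p_{k/2}$. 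In either case, natural candidates for a degree-$2$ or degree-$4$ periodicity operator have appeared, namely $\tilde c_1,\tilde c_2$ or $\tilde p_1$, and the numerical bounds $k\le n/2$ and $k\le n/3$ are exactly what is needed for the second-highest characteristic class to lie in the range of unique liftability.

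The hard part, and final step, is to combine the $k$-periodicity with the relations $e=\tilde c_{k/2}$ or $e^2=\pm\tilde p_{k/2}$ to conclude $4$-periodicity. The $k$-periodicity only guarantees that $\cdot e$ is an isomorphism; factoring $e$ (or $e^2$) through lifted characteristic classes does not automatically make the factors act periodically, since individual lifts could in principle be nilpotent. My approach would be a careful Poincar\'e-duality-based analysis of the subring of $H^*(M,\mathbb{Q})$ generated by $e$ together with the $\tilde c_j$ or $\tilde p_j$, aiming to show that once $e$ is realized as a polynomial in these lower-degree classes, the periodicity of $e$ must propagate back to a degree-$4$ element $x\in H^4(M,\mathbb{Q})$. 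This is where the improvement beyond Kennard's earlier threshold of roughly $k\le n/4$ must originate, and it is the step whose delicacy I expect to determine the full strength of the theorem.
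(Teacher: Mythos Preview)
Your setup in the first two steps is fine and matches the paper, but the third step has a genuine gap: you are trying to work entirely over $\mathbb{Q}$, and this cannot succeed. Rationally the Chern classes (respectively Pontryagin classes) are algebraically independent in $H^*(BU,\mathbb{Q})$, so there is \emph{no} universal relation expressing $e=\tilde c_{k/2}$ as a polynomial in lower $\tilde c_j$, and no reason whatsoever for $\tilde c_1$, $\tilde c_2$ or $\tilde p_1$ to be nonzero. A complex bundle can perfectly well have $c_1=c_2=0$ and $c_{k/2}\ne 0$. Poincar\'e duality alone gives you no handle on this; your ``careful Poincar\'e-duality-based analysis of the subring'' is not a strategy but a hope.

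What the paper actually does is pass to $\mathbb{Z}_p$-coefficients for $p=2,3$ and use Steenrod operations. The key point (Lemma~\ref{appalemma}) is that in $H^*(BU,\mathbb{Z}_p)$ the top Chern class $c_{k/2}$ \emph{does} satisfy a relation of the form
\[
c_{k/2}\;\equiv\;(\text{nonzero scalar})\cdot P^{p^i}(c_{\text{lower}})\pmod{\text{products of lower }c_j},
\]
and a delicate ``going down'' argument (Lemmas~\ref{oldgoingdown}--\ref{goingdown}) shows that if $P^{p^i}(c_{\text{lower}})$ induces periodicity then so does $c_{\text{lower}}$. Iterating gives periodicity by a class of degree $2\lambda p^a$ with $\lambda\le p-1$; combining the information from $p=2$ and $p=3$ via Lemma~\ref{ptoq} and a $\gcd$ argument yields $4$-periodicity over $\mathbb{Q}$. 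The numerical hypotheses $k\le n/2$ (complex) and $k\le n/3$ (real, via $e^2=c_k(\mathcal V\otimes\mathbb{C})$) are precisely what make these Steenrod-operation lemmas applicable, not what makes the second-highest class liftable as you suggest. (Also, Kennard's original threshold was $k\le n/3$, not $n/4$.)
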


This is an improvement of the original four-periodicity theorem of Lee Kennard, who showed this in the case $k\le \frac{n}{3}$ with $M$ simply connected.

\begin{customtheorem}{}[Four-periodicity Theorem, \cite{k13}]
Let $(M^n, g)$ be a closed, orientable, simply connected Riemannian manifold such that $\Hz{M}$ is $k$-periodic for $k\le \frac{n}{3}$.

Then $\Hq{M}$ is $4$-periodic.
\end{customtheorem}
(See Section 2 for the definition of periodicity; in particular, $\Hz{M}$ is $k$-periodic under the assumptions of theorem \ref{mainper})

Kennard's theorem appears to be sharp if one looks at the available examples: The Cayley projective plane $Ca\mathbb{P}^2$ is a 16-dimensional closed manifold with cohomology ring $\Z[x]/{x^3}$, where the generator $x$ is in degree 8, and contains a maximally connected $S^8$, which hits this generator. Since the Cayley plane is clearly not $4$-periodic rationally, improving the assumptions to only require $k\le \frac{n}{2}$ does not seem plausible in general. We are able to achieve this improvement anyways by noting that we mainly care about submanifolds given as the fixed point components of a circle action. In this case the normal bundle of $N$ comes naturally with the isotropy action of this circle, giving it the structure of a complex vector bundle, which turns out to be just enough additional structure for improvement.

Finally, we prove a variation of
\begin{theorem*}\cite[Theorem C, even-dim]{kww}
Let $M^n$ be a connected, closed, oriented even-dimensional manifold with $b_{odd}(M, \Q)=0$, admitting a smooth and effective $T^d$-action with $d\ge 4$ and at least one of the following:
\begin{itemize}
\item Every fixed point component of every codimension 3 subtorus is a rational cohomology $\s{m}, \cp^m$ or $\hp^m$.
\item Every fixed point component of every codimension 2 subtorus is a rational cohomology $\s{m}, \cp^m$ or $\hp^m$ and there is a n-dimensional rank one symmetric space $C$ with $\chi(M)=\chi(C)>0$.
\end{itemize}
Then $M$ has the rational cohomology of $\s{n}, \CP{n}, \HP{n}$ or $Ca\mathbb{P}^2$.
\end{theorem*}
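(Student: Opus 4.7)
My proof plan follows the strategy of Kennard, Wiemeler and Wilking in their original proof of Theorem C from \cite{kww}, with Theorem \ref{mainper} substituted at the critical four-periodicity step. Since Theorem \ref{mainper} permits codimension $k \leq n/2$ (rather than Kennard's $k \leq n/3$) when the normal bundle admits a complex structure, this substitution is what allows the subtorus assumption to be weakened in the present variation.

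Since $b_{odd}(M, \Q) = 0$, the $T^d$-action is equivariantly formal, giving $H^*_{T^d}(M, \Q) \cong H^*(M, \Q) \otimes H^*(BT^d, \Q)$ as $H^*(BT^d, \Q)$-modules. By Borel localization, this module is determined (after inverting suitable elements) from the equivariant cohomology of the fixed sets $\Fix(T')$ of subtori $T' \subset T^d$. Under the first alternative of the hypothesis, the components of $\Fix(T')$ for $T'$ of codimension $3$ are rational $\s{m}, \cp^m$, or $\hp^m$, and in particular are $4$-periodic rationally; the second alternative gives the analogous statement for codimension-$2$ subtori together with the Euler-characteristic rigidity $\chi(M) = \chi(C) > 0$. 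The task is then to lift this periodicity up to $M$ itself. Picking a circle $S^1 \subset T^d$ inside such a subtorus and a component $F$ of $\Fix(S^1)$ of small codimension in $M$, the normal bundle $\Nu(F)$ inherits a complex structure from the $S^1$-action, and a Borel-formula dimension count using $d \geq 4$ arranges $\codim_M F \leq n/2$. Theorem \ref{mainper} then yields rational $4$-periodicity of $H^*(M, \Q)$, provided the inclusion $F \hookrightarrow M$ is $\dim F$-connected; under the second alternative, the Euler-characteristic constraint is what supplies the extra rigidity needed to run this argument with the weaker codimension-$2$ hypothesis.

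Once rational $4$-periodicity of $M$ is established, Poincaré duality together with $b_{odd}(M, \Q) = 0$ severely restricts the ring structure of $H^*(M, \Q)$, yielding $\s{n}, \CP{n}, \HP{n}$, or (in the exceptional $n = 16$, degree-$8$-generator case) $\cayley$. The main obstacle will be establishing the required $\dim F$-connectedness of $F \hookrightarrow M$: without positive curvature, Wilking's Connectedness Lemma is unavailable, so the connectedness must be extracted purely from the equivariant Serre spectral sequence together with the recursive structure on smaller subtori. It is at this step that the sharpness of the complex-normal-bundle variant of Theorem \ref{mainper} is most essential, since under the weaker codimension-$2$ or $3$ subtorus assumption one would otherwise fail to produce a fixed component $F$ of sufficiently small codimension for the original four-periodicity theorem to apply.
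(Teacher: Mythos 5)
Your proposal has a genuine gap at the step you yourself flag as ``the main obstacle'': without positive sectional curvature there is no Connectedness Lemma, and the inclusion of a fixed-point component $F\hookrightarrow M$ of a circle (or subtorus) is in general \emph{not} $\dim F$-connected. This is not a technical difficulty to be overcome with the equivariant Serre spectral sequence; it is simply false in the smooth, equivariantly formal setting of this theorem, so the entire route through Theorem \ref{mainper} collapses. Relatedly, your framing that Theorem \ref{mainper} ``is what allows the subtorus assumption to be weakened'' misreads the structure of the paper: the improved four-periodicity theorem is used only in the positively curved setting of Section \ref{B} (Theorem \ref{mainfix}), and plays no role in the proof of Theorem \ref{maincomb} or of the cited result.

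The actual argument, both in \cite{kww} and in the variation proved here, is combinatorial rather than periodicity-based. One uses equivariant formality and the Chang--Skjelbred theorem to reduce to the one-skeleton $\{p\in M \mid \dim(Tp)\le 1\}$, which is connected, and builds the labelled graph of Definition \ref{graphdef} on the fixed-point components $F_i$. The hypotheses on codimension-$2$ (or $3$) subtori force the strong regularity of Lemma \ref{graphlemma}: a uniform edge multiplicity $m$ between all pairs of vertices, the Euler characteristic identity $\chi(M)=\frac{n}{2m}+1$, and the key dichotomy $m\in\{1,2,\frac{n}{2}\}$ (with the extra case $m=4$, $\chi=3$ giving $\cayley$ when $d=4$), after which Lemma \ref{graphlemma}(5) identifies the rational cohomology ring of $M$. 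The work lies in pinning down $m$ via the weight-set combinatorics of triangles in this graph (in this paper, via Sylvester--Gallai and Sten Hansen's theorem; in \cite{kww}, via the codimension-$3$ fixed-point data or the Euler characteristic constraint $\chi(M)=\chi(C)$). Your closing step --- that rational $4$-periodicity plus $b_{odd}=0$ plus Poincar\'e duality pins down the ring --- is sound in isolation, but the path you propose to reach $4$-periodicity is not available here.
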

We prove 
\begin{maintheorem}\label{maincomb}
Let $M^n$ be an even-dimensional closed oriented manifold with $b_{odd}(M)=0$, equipped with a $T^d$-action, $d\ge 5$, such that all fixed point components of codimension-2 subtori are of rational cohomology $\s{}, \cp$, or $\hp$ type.

Then $M$ has the rational cohomology of $\s{n}, \CP{n}$, or $\HP{n}$.
\end{maintheorem}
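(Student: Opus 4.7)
The plan is to reduce to the first bullet of KWW's Theorem C by verifying its codimension-$3$ hypothesis, and then to exclude the Cayley plane case using the stronger rank assumption $d \ge 5$.

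First I would show that every component $F$ of the fixed set of any codimension-$3$ subtorus $K \subset T^d$ is rationally $\s{m}, \cp^m$, or $\hp^m$. The quotient $T^3 = T^d/K$ acts on $F$, and for each codimension-$2$ subtorus $H \supset K$ the intersection $\Fix(H) \cap F$ is precisely the fixed set of the circle $H/K \subset T^3$ in $F$; this is a union of components of $\Fix(H)$, each rationally nice by assumption. Together with the inherited condition $b_{\mathrm{odd}}(F) = 0$ (from equivariant formality applied to $M$), this provides enough local data on $F$ to apply Theorem \ref{mainper}: one selects a circle $S \subset T^3$ whose fixed set in $F$ contains a component $F'$ of maximal dimension, so that the $S$-action equips $\Nu(F' \subset F)$ with a complex structure; under the codimension and connectedness hypotheses of Theorem \ref{mainper}'s first bullet, we then obtain $4$-periodicity of $H^*(F, \Q)$. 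Combined with the known nice structure of $F'$ and $b_{\mathrm{odd}}(F) = 0$, this classification forces $F$ itself into one of the three rational types.

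Once this codimension-$3$ hypothesis is established, KWW Theorem C gives that $M$ has the rational cohomology of $\s{n}, \CP{n}, \HP{n}$, or $\cayley$. To eliminate the Cayley plane case I would invoke the standard rank bound for torus actions on rational cohomology Cayley planes: since $\chi(\cayley) = 3$, at most three isolated fixed points can occur, and a weight-counting argument at these fixed points (using equivariant formality from $b_{\mathrm{odd}}(M) = 0$) bounds the rank of an effectively acting torus by $4$. The hypothesis $d \ge 5$ then rules this case out, leaving only the three desired rational types.

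The hardest step will be the intermediate rigidity statement for $F$: without a positive curvature assumption, the codimension bound $\codim(F' \subset F) \le \dim(F)/2$ and the $\dim(F')$-connectedness of $F' \hookrightarrow F$ required by Theorem \ref{mainper} are not automatic. Addressing this likely requires an induction on $\dim M$, or a supplementary equivariant-cohomological analysis of the $T^3$-action on $F$ via Borel localization onto the nice subcomponents, in order to bootstrap the required hypotheses of the improved four-periodicity theorem.
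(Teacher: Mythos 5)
Your central step fails. Theorem \ref{maincomb} carries no curvature hypothesis, and the paper's Theorem \ref{mainper} requires the inclusion $F'\hookrightarrow F$ to be $\dim(F')$-connected (plus the codimension bound $\codim \le \dim(F)/2$). In every application in this paper, that connectedness comes from Wilking's Connectedness Lemma, i.e.\ from positive sectional curvature; for a general smooth torus action on a closed oriented manifold with $b_{odd}=0$ there is no mechanism producing it, and inclusions of fixed-point components are typically not highly connected at all. You flag this yourself as ``the hardest step,'' but the proposed remedies (induction on dimension, Borel localization) are not arguments: Borel localization gives information about equivariant cohomology localized at the fixed set, not about connectivity of inclusions, and no induction scheme is set up. Since verifying the codimension-$3$ hypothesis of KWW's Theorem~C is the entire content of your reduction, the proof does not go through. (Your Cayley-plane exclusion is also not ``standard'': bounding the rank of a torus acting on a rational cohomology $\cayley$ by $4$ itself requires an argument; the paper's Remark \ref{t4remark} indicates that the closely related rigidity question at $d=4$ is open in general.)

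For contrast, the paper deliberately avoids codimension-$3$ fixed-point components altogether. It works with the GKM-type graph on the one-skeleton from \cite[Def.~5.1]{kww} and Lemma \ref{graphlemma}, establishes combinatorial constraints on the projectivized weight sets of the edges (Lemmas \ref{s2comb}--\ref{transversalweights}), and then applies theorems of real projective combinatorics --- Sylvester--Gallai and Sten Hansen's higher-dimensional generalization --- to force every weight set to span a subspace of dimension at most $2$ when $d\ge 5$ (Lemmas \ref{atmostfour} and \ref{smallorbig}). This yields $m\le 2$ for the edge multiplicity, and Lemma \ref{graphlemma}(5) then identifies $H^*(M,\Q)$ with that of $\s{n}$, $\CP{n}$, or $\HP{n}$; the Cayley plane never appears because $m\le 2$. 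If you want to salvage your route, you would need an independent, curvature-free proof of the codimension-$3$ rigidity, which is precisely what the paper's combinatorial detour is designed to circumvent.
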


We achieve this result without reliance on any fixed point components of codimension 3 tori by replacing a key step in the argument using results of real projective combinatorics, allowing for a simplification of part of the original proof while at the same time showcasing a notable qualitative difference between actions of tori of dimensions $\le 4$ and $\ge 5$ in this setting. In exchange, we are no longer able to obtain the same rigidity for $T^4$-actions as the original theorem, see however Remark \ref{t4remark} on the $T^4$-case.\\

Combining Theorems \ref{mainfix} and \ref{maincomb}, we obtain the final main theorem of this thesis:

\begin{maincor}
Let $(M^n, g)$ be an even-dimensional closed Riemannian manifold of positive sectional curvature  with $b_{odd}(M, \Q)=0$, equipped with an effective isometric $T^6$-action.\\
Then $H^*(M, \Q)$ is, as a ring, that of $\s{n}, \CP{n}$, or $\HP{n}$.
\end{maincor}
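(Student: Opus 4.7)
The plan is a direct chaining of Theorems \ref{mainfix} and \ref{maincomb}; essentially all the content of the corollary is already contained in these two results, and the task is to verify their hypotheses and combine them.

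First, if $M$ is not orientable I would pass to the oriented double cover, as is standard in this setting: positive curvature lifts, the $T^6$-action lifts (possibly after passing to a finite cover of the torus, which is again a torus of the same rank, and possibly quotienting out a finite kernel to restore effectiveness), the hypothesis $b_{\mathrm{odd}}=0$ is preserved since $M$ has the rational cohomology of the $\Z/2$-invariants of the cover, and the rational cohomology rings of both spaces determine each other. Thus I may assume $M$ is orientable.

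Next, for any codimension-$2$ subtorus $T\subset T^6$, i.e.\ any $T^4\subset T^6$, the restriction of the $T^6$-action to $T$ is again effective, because its kernel is the intersection of $T$ with the trivial kernel of the ambient action. Theorem \ref{mainfix} then applies to each such $T$ and yields that every component of $\Fix(T)$ has the rational cohomology ring of some $\s{}$, $\cp$, or $\hp$. This is exactly the codimension-$2$ hypothesis appearing in Theorem \ref{maincomb}; together with $d=6\ge 5$ and $b_{\mathrm{odd}}(M,\Q)=0$, which is part of the corollary's assumptions, Theorem \ref{maincomb} now applies and gives that $H^*(M,\Q)$ is, as a ring, that of $\s{n}$, $\CP{n}$, or $\HP{n}$.

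Since the argument is nothing more than the chaining of the two main theorems with a standard orientability reduction, there is no real obstacle here; this is precisely the role the corollary plays, packaging the geometric output of Theorem \ref{mainfix} (which uses positive curvature) together with the purely topological output of Theorem \ref{maincomb} (which does not). The only point one might flag is verifying that restricting to a codimension-$2$ subtorus does not cost effectiveness, which as noted is immediate.
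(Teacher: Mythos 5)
Your main argument is exactly the paper's (whose entire proof is the phrase ``combining Theorems \ref{mainfix} and \ref{maincomb}''): the restriction of the effective $T^6$-action to any codimension-two subtorus $T^4$ is still effective, Theorem \ref{mainfix} identifies every fixed point component of every such subtorus as a rational $\s{}$, $\cp$ or $\hp$, and Theorem \ref{maincomb} with $d=6\ge 5$ and $b_{odd}(M,\Q)=0$ then gives the conclusion. That part is correct and complete.

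The one place you go beyond the paper --- the reduction to the orientable case --- does not work, and in fact cannot. First, your justification that $b_{odd}=0$ passes to the orientation double cover $\tilde M$ only gives the inequality in the wrong direction: $H^*(M,\Q)$ is the $\Z_2$-invariant part of $H^*(\tilde M,\Q)$, so $b_{odd}(M)\le b_{odd}(\tilde M)$, and the deck transformation could act as $-1$ on odd-degree classes of $\tilde M$ without leaving a trace in $M$. Second, and decisively, even if one knew that $\tilde M$ is a rational $\s{n}$, $\CP{n}$ or $\HP{n}$, the stated conclusion for $M$ itself would be false: a non-orientable closed $n$-manifold has $H^n(M,\Q)=0$ and therefore can never have the rational cohomology ring of an even-dimensional rank one symmetric space. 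Concretely, $\R P^{12}$ with the round metric satisfies every hypothesis as literally written (positive curvature, vanishing odd rational Betti numbers, effective isometric $T^6$-action) yet has $H^*(\R P^{12},\Q)=\Q$. The corollary thus implicitly carries the orientability hypothesis already present in Theorems \ref{mainfix} and \ref{maincomb}; the correct fix is to add that hypothesis (as the paper tacitly does), not to remove it by a covering argument.
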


In these theorems the perhaps inelegant requirement of vanishing odd rational cohomology can be understood as a technical issue. It is already a consequence of positive sectional curvature if one assumes the Bott-Grove-Halperin ellipticity conjecture \cite{grove}, as $\chi(M)>0$ by Corollary \ref{hopf}, see also the discussion at the beginning of Section \ref{C}.

\subsection{Structure}\,\\
In Section 2, we build up the toolbox of periodicity and prove Theorem \ref{mainper}. In Section 3, we use this to prove Theorem \ref{mainfix} and Corollary \ref{hopf}. We prove Theorem \ref{maincomb} in Section 4. Appendix A contains a section on calculations in $H^*(BU)$ to produce a self-contained proof of Lemma \ref{appalemma}. Appendix B contains the classification of $T^4$-representations without finite isotropy groups that produces Table \ref{tab:t4list}.

\subsection{Acknowledgements}\,

I would like to thank my advisor, Burkhard Wilking, for pointing me to these interesting problems, and Michael Wiemeler, for sharing with me his proof of Lemma \ref{wiemeler}. The author was supported by the Deutsche Forschungsgemeinschaft (DFG,
German Research Foundation) – Project-ID 427320536 – SFB 1442, as well as under Germany's Cluster of Excellence Strategy EXC 2044 - 390685587, Mathematics
M\"unster: Dynamics–Geometry–Structure.

\tableofcontents

\newpage
\section{An Improved four-periodicity Theorem}\label{A}

In this Section we prove

\begin{customtheorem}{\ref{mainper}}
Let $(M^n, g)$ be a closed, orientable Riemannian manifold and \\ $N^{n-k}\subset M$ an orientable submanifold with $dim(N)$-connected inclusion map. If \\
\begin{itemize}
\item $k\le \frac{n}{2}$ and the normal bundle $\Nu(N)$ admits a complex structure, or
\item $k\le \frac{n}{3}$,
\end{itemize}
then $\Hq{M}$ is 4-periodic.
\end{customtheorem}

In order to do this, we first build up the toolbox of periodicity, stating and proving in section 2.1 the basic lemmas in a sufficiently general way for later use. We then use these to show improved mod $p$ periodicity theorems for periodicity induced by complex characteristic classes, culminating in a proof of Theorem \ref{mainper} in section 2.2. For this, we use a result on the action of the Steenrod algebra on the complex classifying space $BU$. An elementary proof of this can be found in Appendix A.

Comparing the second case of theorem \ref{mainper} with the original four-periodicity theorem of Kennard, one notes that here the additional requirement of a submanifold/vector bundle inducing the periodicity replaces Kennard's requirement of vanishing fundamental group. In practice, where periodicity tends to arise from such submanifolds, this simplifies proofs slightly as it eliminates the need for covering arguments.

\subsection{Basic results on periodicity}

In this chapter we build up the theory of manifolds with periodic cohomology and give proofs for (sufficiently generalized versions of) those parts that will play a role in our proof of Theorem \ref{mainper}.\\

The starting point for four-periodicity in the context of positively curved manifolds is a pair of closed oriented manifolds $N^{n-k}\into M^{n}$ such that the inclusion is $(n-k)$-connected. In this setting there is already periodicity: in fact, by \cite{w03}, cup product with the Euler class $e(\Nu)$ of the normal bundle $\Nu$ of $N$ in $M$, given in the cohomology of $M$ either directly by $\iota_*[N]\cap[M]$ or more abstractly as the pullback of the Thom class under $(M, \emptyset)\to (M, M\backslash N) \approx_{H^*} (T(\Nu N), \infty)$, induces isomorphisms in all degrees $0<i<n-k$, and is injective in degree $i=n-k$, surjective in degree $i=0$.\\
To see that this is in fact true, we realize $\cup e(\Nu)$ as a composition of maps
\begin{align*}\label{perdiag}
H^*(M) \xrightarrow{\iota^*} H^*(N) \xrightarrow{} H_{n-k-*}(N) \xrightarrow{\iota_*} H_{n-k-*}(M) \xrightarrow{} H^{*+k}(M). \tag{{$\star$}}
\end{align*}
Note that $e(\Nu)$ becomes the `true' Euler class in $H^*(N, \Z)$ after restriction, justifying our naming.\\
The unlabeled maps are Poincar\'{e}-dualities $\cap [N], \cap [M]$ of $N$ and $M$, both isomorphisms in all degrees. The behaviour of $\cup e(\Nu)$ in extremal degrees is explained by the fact that $\iota_*$ is only surjective, $\iota^*$ only injective, in degree $n-k$.\\

\begin{Def}\label{perdef} We say that a class $x\in H^*(M^n)$ induces periodicity, if either 
\begin{itemize}
\item $|x|\le \frac{n}{2}$ and $\cup x:H^i(M)\to H^{i+|x|}(M)$ is injective for $0<i\le n-|x|$ and surjective for $0\le i < n-|x|$, or
\item $|x|\le n$ and $x$ is a product of such elements.
\end{itemize}

Note that in the second case, one still has that  $\cup x:H^i(M)\to H^{i+|x|}(M)$ is injective for $0<i\le n-|x|$ and surjective for $0\le i < n-|x|$.\\

We say $H^*(M)$ is $k$-periodic if there is such an element $x$ in degree $|x|=k$.
\end{Def}

\begin{Rem}Whenever we omit coefficients, we mean something to hold with an arbitrary, but fixed, coefficient ring, and specify it only in the situations where the choice matters.
\end{Rem}

\begin{Rmk} Earlier work usually places no restrictions on $|x|$ in the above definition, providing for a weaker condition for inducing periodicity for elements in high degrees. However, we exclude this on purpose in order to avoid ambiguity when talking about high degree elements inducing periodicity. For illustration, using the weaker definition, every generator of $H^{n-1}(M)$ would automatically induce periodicity whenever this group is singly generated, which would make several of the following lemmas false as stated.\\
\end{Rmk}

\begin{Rmk}
Positively curved manifolds are not the only context in which cohomology rings exhibit periodicity. One other notable appearance of this phenomena is in the study of finite groups, where sometimes one finds periodicity in the cohomology of the classifying space $BG$. For an overview on finite groups with periodic cohomology, see \cite{Wall2013} and the references therein. Note that if a space $BG$ satisfies Definition \ref{perdef} with $n=\infty$, the following lemmas and theorems remain true, setting  $n=\infty$ everywhere.
\end{Rmk}

Note that any product of elements satisfying the condition on cup products from Definition \ref{perdef} satisfies it again. One consequence of the degree requirement is that the converse is true as well:

\begin{Lemma}\label{ycupz}
If $x\in H^*(M)$ induces periodicity and $x=y\cup z$, then $y$ and $z$ also induce periodicity.
\end{Lemma}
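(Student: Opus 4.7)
Without loss of generality assume $|y| \le |z|$, so that $|y| + |z| = |x| \le n$ forces $|y| \le n/2$. My aim is to verify that $\cup y$ satisfies the injectivity and surjectivity conditions of case 1 of Definition \ref{perdef}; the argument for $\cup z$ will be symmetric when $|z| \le n/2$, and otherwise $z$ will inherit periodicity through the product structure. Working with $\Q$-coefficients, Poincar\'e duality on the closed oriented manifold $M^n$ makes the maps $\cup y : H^i(M) \to H^{i+|y|}(M)$ and $\cup y : H^{n-i-|y|}(M) \to H^{n-i}(M)$ mutually adjoint via the cup-product pairing, so the two conditions are equivalent; it therefore suffices to establish injectivity of $\cup y$ throughout $0 < i \le n-|y|$.

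The factorization $\cup x = \cup z \circ \cup y$, combined with the injectivity of $\cup x$ on $H^i$ for $0 < i \le n-|x|$ (which, by the Remark after Definition \ref{perdef}, holds regardless of whether $x$ is case 1 or case 2), immediately yields injectivity of $\cup y$ on the same range. By Poincar\'e duality this translates to surjectivity of $\cup y$ on $H^j$ for $|z| \le j < n-|y|$. The outstanding task is to cover the middle range $0 \le j < |z|$, equivalently injectivity of $\cup y$ on $H^i$ for $n-|x| < i \le n-|y|$.

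To close this gap, I would invoke the symmetric analysis applied to $\cup z$, which gives $\cup z : H^k \to H^{k+|z|}$ injective for $0 < k \le n-|x|$ and surjective for $|y| \le k < n-|z|$. After first reducing to the case $|x|\le n/2$ (by induction on the number of case-1 factors in any product expression of $x$ from the second clause of Definition \ref{perdef}), the degree bound $|x| \le n/2$ guarantees $|x|-1 \le n - |x|$ and $|x|-1 < n-|z|$, so the window $|y| \le k \le |x|-1$ lies within both ranges and $\cup z$ is a bijection there. Now for any $u \in H^{j+|y|}$ with $0 \le j < |z|$, the element $zu \in H^{j+|x|}$ lies in the image of $\cup x$ (which is surjective on $H^j$ since $j < |z| \le n - |x|$), giving $v \in H^j$ with $xv = zu$, i.e., $z(yv - u) = 0$; bijectivity of $\cup z$ on $H^{j+|y|}$ then forces $yv = u$, so $\cup y$ is surjective on $H^j$ throughout the required range.

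The main obstacle is precisely bridging this middle gap: neither the factorization argument alone nor Poincar\'e duality alone suffices, and one must combine them, using the bijectivity of $\cup z$ on the intermediate degrees to lift the surjectivity of $\cup x$ down to $\cup y$. The subsequent reduction from case 2 of Definition \ref{perdef} to case 1 requires care, as one must faithfully propagate the case-1 structure through each sub-product appearing in any factorization of $x$.
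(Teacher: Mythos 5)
Your argument is correct over a field, but it takes a genuinely different route from the paper's, and the difference matters. The paper's proof never invokes Poincar\'e duality on $M$: after the same reduction to $|x|\le n/2$, it gets injectivity of $\cup y$ in degrees $0<i\le n-|x|$ from the factorization $\cup x=\cup z\circ\cup y$ (as you do), and then covers the remaining degrees $n-|x|<i\le n-|y|$ by the purely algebraic identity $\cup y=\cup x\circ\cup y\circ(\cup x)^{-1}$, conjugating by the isomorphism $\cup x$ in lower degrees; surjectivity is handled by the mirror-image argument. You instead convert surjectivity into injectivity via the cup-product pairing and close the middle range $0\le j<|z|$ by lifting through $\cup x$ and cancelling $z$ (the step $xv=zu\Rightarrow z(yv-u)=0\Rightarrow yv=u$), which is a nice self-contained argument and arguably more transparent in that range. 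What the paper's approach buys is coefficient-independence: the Remark immediately following Definition \ref{perdef} fixes the convention that omitted coefficients mean an \emph{arbitrary} coefficient ring, and the lemma is subsequently invoked in integral and mod-$p$ cohomology (e.g.\ in Lemma \ref{ptoq} and Corollary \ref{gcd}). Your adjointness step ``injective iff the dual map is surjective'' is valid for the perfect pairing over a field ($\Q$ or $\Z_p$ with the appropriate orientability), but fails over $\Z$ in the presence of torsion, so as written your proof does not establish the statement in the generality the paper needs. Two smaller points: the degenerate case $|y|=0$ slips outside your injectivity window $0<k\le n-|x|$ for $\cup z$ at $k=|y|$, and the reduction to $|x|\le n/2$ is only gestured at --- the paper's version (write $x=x_1x_2$, divide $x_1$ out of the larger of $y,z$ using surjectivity of $\cup x_1$, and induct) is the detail you would need to supply.
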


\begin{proof}
We reduce first to the case where $|x|\le \frac{n}{2}$: If this is not the case, by definition we can write $x$ as product $x=x_1 x_2$ of lower degree elements inducing periodicity. We may assume $|x_1|\le |x_2|$. Then $x_1$ must have degree less than or equal to at least one of $y,z$, say $y=y_1 x_1$ (Note that, by definition, $\cup x_1$ is surjective into $H^{|y|}(M)$). This then gives an equation $x_2 = y_1 z$ and if we show that $y_1$ induces periodicity, $y$ induces periodicity by definition. We may assume $k:=|x|\le \frac{n}{2}$.\\

By symmetry of $y$ and $z$ it is enough to show the claim for $y$:\\
We give the argument for injectivity of $\cup y$, the one for surjectivity is analogous.\\
For $0\le i \le n-k$, we can write $\cup x: H^i \to H^{i+k}$ as a composition $\cup x=\cup z \circ \cup y$. As the first map in an injective composition, $\cup y: H^i \to H^{i+|y|}$ must be injective itself.\\
If $n-k < i \le n-|y|$, consider $\cup x \circ \cup y = \cup y \circ \cup x: H^{i-k}\to H^{i+|y|}$.\\
Since we already know $\cup y$ is injective in degree $i-k$ and $\cup x$ is injective in degree $i-k+|y|$ and an isomorphism in degree $i-k$, the claim follows from $\cup y = \cup x \circ \cup y \circ (\cup x)^{-1}$.\\
Note also that the bound $|x|\le \frac{n}{2}$ is sharp to ensure these two constructions cover all degrees.
\end{proof}

\begin{cor}\label{gcd}
Let $x\in H^k(M)$, $y\in H^l(M)$ induce periodicity in $H^*(M)$.\\
Then there is an element inducing periodicity in $H^*(M)$ in degree $\gcd(k,l)$.
\end{cor}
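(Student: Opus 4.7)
The plan is a Euclidean-algorithm argument on the degrees of $x$ and $y$, using Lemma \ref{ycupz} at each step. The key reduction is: given $x \in H^k(M)$ and $y \in H^l(M)$ both inducing periodicity, with $0 < k < l < n$, we produce a new periodicity-inducing element in degree $l - k$. This works because $x$ inducing periodicity gives surjectivity of $\cup x : H^{l-k}(M) \to H^l(M)$ (using $0 \le l - k < n - k$), so one may pick $y' \in H^{l-k}(M)$ with $y = x \cup y'$, and by Lemma \ref{ycupz} the factor $y'$ itself induces periodicity.

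I would then iterate this reduction, swapping the roles of the two elements whenever needed so that the smaller-degree one plays the role of $x$. At each pass the maximum of the two degrees strictly decreases (from $l$ to $\max(k, l-k) < l$) while the $\gcd$ of the two degrees is preserved, so after finitely many steps the iteration terminates at two periodicity-inducing elements of the common degree $\gcd(k, l)$. The boundary case $k = 0$ is trivial, taking $y$ itself.

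The only remaining case is the boundary $l = n$, where the surjectivity clause of Definition \ref{perdef} does not give us a preimage of $y$ under $\cup x$. Here $l > n/2$, so $y$ must fall under the second bullet of Definition \ref{perdef}: it is a product $y = y_1 \cdots y_m$ of case-1 elements with $|y_i| \le n/2 < n$, each inducing periodicity by Lemma \ref{ycupz}. Running the Euclidean reduction above on the pairs $(x, y_1), \ldots, (x, y_m)$ successively, we obtain an element $z$ inducing periodicity in degree $e := \gcd(k, |y_1|, \ldots, |y_m|)$. Since $e$ divides both $k$ and $l = \sum_i |y_i|$, it divides $\gcd(k, l)$, so the power $z^{\gcd(k, l)/e}$ has degree $\gcd(k, l) \le n$ and is a product of periodicity-inducing (case-1) elements, hence itself induces periodicity by Definition \ref{perdef}.

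The main obstacle I would expect is precisely this $l = n$ case, where the naive Euclidean step is unavailable because the cup-product surjectivity range shrinks to nothing in top degree. Factoring $y$ into its case-1 components is the workaround, at the cost of possibly producing an element whose degree only properly divides $\gcd(k, l)$; this is then corrected by raising to the appropriate power, which is the reason the paper's strict bound $|x| \le n/2$ in Definition \ref{perdef} (rather than earlier weaker conventions) is being used.
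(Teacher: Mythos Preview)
Your proof is correct and follows the same Euclidean-algorithm strategy as the paper: divide the higher-degree element by the lower-degree one using surjectivity of the cup product, invoke Lemma \ref{ycupz} to see the quotient induces periodicity, and iterate. The paper's proof is a three-line sketch of exactly this; you have simply been more careful about the boundary case where the larger degree equals $n$, which the paper passes over without comment.
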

\begin{proof}
Let us assume $k>l$. By assumption, $\cup y$ is surjective into $H^k(M)$, so we may write $x=y\cup z$ for some $z\in H^{k-l}(M)$. By the previous Lemma, $z$ again induces periodicity. Replacing the pair $x,y$ with $y,z$ allows us to perform the Euclidean algorithm, terminating with an element in degree $\gcd(k,l)$. 
\end{proof}

Another property which we will use extensively is the following, which is a trivial but crucial consequence of the definition:

\begin{Lemma}\label{sums}
Let $F$ be a field and $x\in H^*(M, F)$ induce periodicity.
\begin{itemize}
\item If $y\in H^{|x|}(M, F)$ is nonzero, $y$ induces periodicity. 
\item If $x=y+z$, then at least one of $y,z$ induces periodicity.
\item If $y=x+z$, then at least one of $y,z$ induces periodicity.
\end{itemize}
\end{Lemma}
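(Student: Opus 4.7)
The plan is to reduce all three assertions to a single structural observation: if $x \in H^*(M,F)$ induces periodicity and is nonzero, then $H^{|x|}(M,F)$ is one-dimensional and spanned by $x$. Granted this, each bullet becomes elementary linear algebra inside the line $F \cdot x$.

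To establish the one-dimensionality, I would split by degree. If $|x| < n$, then Definition \ref{perdef} (in both the primitive and the product case the surjectivity range for $\cup x$ extends down to degree $0$) gives that $\cup x \colon H^0(M,F) \to H^{|x|}(M,F)$ is surjective; its image is $F \cdot x$ since $H^0(M,F) = F$, so $H^{|x|}(M,F) = F \cdot x$ is one-dimensional. If $|x| = n$, then Poincar\'e duality, using closedness, orientability and field coefficients, gives $H^n(M,F) \cong F$ directly, and nonzero $x$ spans this line.

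With this in hand, the first bullet is immediate: a nonzero $y \in H^{|x|}(M,F)$ equals $cx$ for some $c \in F^\times$, so $\cup y = c\,(\cup x)$ inherits the injectivity and surjectivity properties of $\cup x$ in every degree. For the second bullet, write $y = ax$ and $z = bx$; the relation $x = y + z$ forces $a + b = 1$ in $F$, so at least one of $a, b$ is nonzero and the corresponding term is a nonzero multiple of $x$, inducing periodicity by the first bullet. The third bullet is simply the second bullet applied to $x = y - z$, using that $-z$ induces periodicity precisely when $z$ does.

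I anticipate no real obstacle here; the entire proof is driven by noticing the hidden one-dimensionality of $H^{|x|}(M,F)$ that the definition of periodicity silently enforces. The only fiddly boundary case is $x = 0$, which if it satisfied Definition \ref{perdef} at all would force the intermediate cohomology groups to vanish in a range that makes the three stated conclusions vacuous.
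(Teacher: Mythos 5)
Your proof is correct and follows essentially the same route as the paper: the surjectivity of $\cup x\colon H^0(M,F)\to H^{|x|}(M,F)$ forces $H^{|x|}(M,F)$ to be spanned by $x$, after which all three bullets are linear algebra over the field $F$. Your extra care with the boundary case $|x|=n$ (where the definition's surjectivity range is empty and one falls back on Poincar\'e duality) is a small refinement the paper's one-line proof glosses over, but it changes nothing substantive.
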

\begin{proof}
By definition, $\cup x: H^0(M, F)\to H^{|x|}(M, F)$ is surjective, so $H^{|x|}(M, F)$ is generated by $x$. Then, in the second case, at least one of $y$ and $z$ is a nonzero multiple of $x$, and in the first case $y$ is a nonzero multiple of $x$. The coefficient is invertible since $F$ is a field. The third statement is equivalent to the second.
\end{proof}
Earlier we have seen that a maximally connected inclusion of submanifolds induces a periodicity in integral cohomology of the ambient manifold $M$.\\
Replacing coefficient rings in the above diagram (\ref{perdiag}), we also see immediately that the restriction $e(\nu)_p$ to $H^*(M, \Zp)$ induces periodicity in $H^*(M, \Zp)$.\\
On the other hand, inducing periodicity in $H^*(M, \Zp)$ for some $p$ is not much worse than in integral cohomology if we are only interested in rational conclusions. In fact, the following Lemma holds:\\

\begin{Lemma}\label{ptoq}
If $H^*(M, \Z)$ is $k$-periodic with $2k\le n$ and if for some $z\in H^*(M, \Z)$ the mod $p$ restriction $z|_p$ induces periodicity in $H^*(M, \Zp)$, then $z|_\Q$ induces periodicity in $H^*(M, \Q)$.\\
\end{Lemma}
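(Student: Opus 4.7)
The plan is to use the integer $k$-periodicity to constrain the $p$-torsion of $H^*(M, \Z)$ tightly enough that mod $p$ and rational Betti numbers agree in the relevant degrees, after which the mod $p$ periodicity of $\cup z|_p$ transfers to rational periodicity of $\cup z|_\Q$.

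Let $x \in H^k(M, \Z)$ witness the integer $k$-periodicity and set $d = |z|$. Denote by $s_i$ the $\Zp$-dimension of the $p$-torsion subgroup of $H^i(M, \Z)$. Universal coefficients yields
\[
\dim_{\Zp} H^i(M, \Zp) = \dim_\Q H^i(M, \Q) + s_i + s_{i+1}.
\]
The integer isomorphisms $\cup x: H^i(\Z) \xrightarrow{\cong} H^{i+k}(\Z)$ for $0 < i < n-k$ give $s_i = s_{i+k}$ in this range; combining Poincar\'e duality with the universal coefficient theorem gives the reflection $s_i = s_{n+1-i}$; and the boundary values $s_0 = s_n = 0$ follow from $H^0 = H^n = \Z$. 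Under the bound $2k \le n$, these three relations cover the full degree range, propagating from $s_1 = s_n = 0$ to force the key equality $s_i + s_{i+1} = s_{i+d} + s_{i+d+1}$ throughout the middle degrees.

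Combining the latter relation with the mod $p$ $d$-periodicity assumption $\dim H^i(\Zp) = \dim H^{i+d}(\Zp)$ (together with the boundary inequalities) yields the matching rational Betti equality $\dim H^i(\Q) = \dim H^{i+d}(\Q)$ in the same range. To promote this to actual injectivity/surjectivity of $\cup z|_\Q$, one uses that the integer rank of $\cup z$ equals $\dim_\Q \operatorname{im}(\cup z|_\Q)$: any nonzero $u \in \ker(\cup z|_\Q)$ lifts (after clearing denominators) to an integer class $\tilde u$ with $z \cdot \tilde u \in H^{i+d}(\Z)$ torsion, and then mod $p$ injectivity of $\cup z|_p$ combined with the controlled $p$-torsion structure of $H^{i+d}(\Z)$ forces $\tilde u|_p = 0$, and hence $u = 0$. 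The boundary degrees $i = 0$ and $i = n - d$ are handled via the same torsion control together with $H^0(\Q) = \Q$ and $H^n(\Q) = \Q$.

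The main obstacle is establishing the torsion relations cleanly under the sharp assumption $2k \le n$: the combined orbits of the $k$-periodicity and Poincar\'e-duality reflection just barely cover all degrees, and in the edge case $2k = n$ particular care is required to ensure the middle degrees are actually reached. Loosening to $2k > n$ would leave gaps in which $p$-torsion could obstruct the comparison of rational and mod $p$ cup product maps, and the conclusion would fail in general.
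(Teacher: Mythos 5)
Your strategy is genuinely different from the paper's (which shows that $z$ divides a nonzero integral multiple of the periodicity element $x$ and then invokes Lemma \ref{ycupz}), but as written it has two real gaps. First, the ``key equality'' $s_i+s_{i+1}=s_{i+d}+s_{i+d+1}$ does not follow from the three relations you derive: those relations constrain the $s_i$ only along the orbit of the maps $i\mapsto i+k$ and $i\mapsto n+1-i$, which does not exhaust all degrees, and they involve the integral period $k$, not $d=|z|$. Concretely, for $n=8$, $k=4$, $d=2$ the assignment $s_2=s_3=s_6=s_7=1$ and $s_i=0$ otherwise satisfies $s_0=s_n=0$, the reflection $s_i=s_{n+1-i}$, and $s_i=s_{i+k}$ for $0<i<n-k$, yet $s_2+s_3=2\neq 0=s_4+s_5$. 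So the passage from mod $p$ Betti periodicity to rational Betti periodicity is not justified; to rule out such torsion configurations you would have to feed the mod $p$ periodicity itself back into the bookkeeping, which is a substantially different (and harder) argument than the one you sketch.

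Second, even granting equal rational Betti numbers, the injectivity transfer is incomplete: from ``$z\cdot\tilde u$ is torsion'' and injectivity of $\cup z|_p$ you cannot conclude $\tilde u|_p=0$, because a $p$-torsion class in $H^{i+d}(M,\Z)$ can have nonzero image in $H^{i+d}(M,\Zp)$. You would need the relevant $p$-torsion to vanish, which is exactly what the first part fails to establish. You also never address the degree restriction $|z|\le n/2$ built into Definition \ref{perdef}. The paper sidesteps all of this by working multiplicatively: after dividing $z$ by $x$ to lower its degree, one iterates multiplication by $z$ and division by $x$ to show that $z$ divides $ax$ for some integer $a$, which is nonzero because the entire construction survives reduction mod $p$; then $z|_\Q$ divides $x|_\Q$ up to a nonzero scalar and Lemma \ref{ycupz} finishes. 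I would recommend replacing the Betti-number count by a divisibility argument of this kind.
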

\begin{proof}\,\\
Let $x\in H^k(M, \Z)$ induce periodicity. After possibly dividing $z$ by $x$ (cf Lemma \ref{ycupz}), we may assume that the degree of $z$ is lower than that of $x$.\\
In the degenerate case $z_p=0$, there is nothing to show. In this case $M$ is a $\Zp$-cohomology sphere and therefore also a rational cohomology sphere, and $z|_{\Q}=0$ induces periodicity tautologically. We may assume $z_p\ne 0$.

Then we claim $z$ divides $ax$ for some $a\in \Z$. For this construct a sequence of elements $z_i$, starting with $z_1=z$, by multiplying the previous one with $z$, and then, if the resulting degree is above the degree of $x$, dividing by $x$. This is possible since $x$ induces periodicity. Doing this, eventually one will get a $z_i$ with degree equal to the degree of $x$. Since $\cup x$ is surjective from degree zero, this means $z_i$ is a multiple $ax$ of $x$. To see that it is a nonzero multiple, we may reduce the whole construction mod $p$. Since multiplication by $z|_p$ and division by $x|_p$ are isomorphisms, all $z_j|_p$ are nonzero, so $z_i|_p=(ax)|_p=a|_p x|_p \ne 0$, i.e. $a\ne 0$. Thus $z|_{\Q}$ divides $x|_{\Q}$ and the claim follows from Lemma \ref{ycupz}.
\end{proof}

Now we are in position to state the strategy of four-periodicity, which goes as follows:\\
\begin{enumerate}
\item Given periodicity in $\Hz{M}$, we obtain the same periodicity in $\Hp{M}$ for all $p$.
\item Using tools from mod $p$ cohomology, any given periodicity in $\Hp{M}$ implies periodicity of length $k_p$, depending on $p$, with $k_p$ dividing $2(p-1)p^{a}$.
\item Up to lifting issues, these different periodicities for the primes $p=2$ and $p=3$ can be put together with rational coefficients to obtain 4-periodicity.
\end{enumerate}

We have already seen how (1) and (3) work.

This leaves us with the central question: How do we improve periodicity over $\Zp$?\\
The main tool for this are the so-called reduced $p$-th power maps, due to Steenrod, \cite{steen47}($p=2$), \cite{steen53a, steen53b}($p$ odd), whose key properties we will now give a brief introduction to:\\

Steenrod's reduced $p$-th power maps
$P^i:\Hp{X}\to H^{*+2(p-1)i}(X, \Zp)$ are, for each $p$, a family of maps that exist in the mod $p$ cohomology of every topological space $X$:

$$P^i : H^*(X,\Zp) \rightarrow H^{*+2(p-1)i}(X,\Zp)$$
is a family of additive homomorphisms characterized by naturality and\\

\begin{enumerate}
	\item for $x\in H^*(X, \Zp)$,
	\begin{itemize}
		\item $P^0(x)=x$.
		\item $P^i(x)=x^p$ if $deg(x) = 2i$.
		\item $P^i(x)=0$ if $deg(x) < 2i$.
	\end{itemize}
	\item (Cartan formula) For $x,y\in H^*(X, \Zp)$ we have $P^i(xy)=\sum\limits_{k_1+k_2=i}P^{k_1}(x)P^{k_2}(y)$.
	\item (Adem relations) For $a<pb$, 
	$$P^a \circ P^b = \sum\limits_{j\leq a/p}(-1)^{a+j}\binom{(p-1)(b-j)-1}{a-pj}P^{a+b-j} \circ P^j.$$
\end{enumerate}
\begin{Rmk} For $p=2$, there is conventionally a slightly different notation, where one writes $Sq^{2i}$ for $P^i$ and denote the Bockstein homomorphism $\beta:H^*\to H^{*+1}$ associated to the short exact sequence of coefficients $\Zp \to \mathbb{Z}_{p^2} \to \Zp$ by $Sq^1$ (Then $Sq^{2i+1}=Sq^{1}Sq^{2i}$ to satisfy the Adem relations).\\
One consequence of the Adem relations is that the algebra formed by these maps and $\beta$ has a basis, as a $\Zp$-algebra, of $\{\beta, P^{p^i} | i\ge 0\}$. \\
There are also Adem relations describing interactions between the $P^i$ and $\beta$. However, as $\beta$ will play no role in this work, we omit them.
\end{Rmk}

This, together with the relation $x^p=P^{\frac{|x|}{2}}(x)$, is the starting point for improvement of periodicity in mod $p$ cohomology for Kennard in \cite{k13}.

Before we start, we make the following definition/observation first made in the author's masters' thesis, which will simplify several of the following proofs:

\begin{DefLem}\label{iota} For $y\in H^*(M, \Zp)$, \\
set $\iota(y):=\min\{i\ge 0 | P^i(y) $ induces periodicity$\}$, where $\min\emptyset =+\infty$ by convention.\\

Let $y, z$ be elements of $H^*(M,\Zp)$.
\begin{enumerate}
\item If $\iota(y), \iota(z)<\infty$ and $|P^{\iota(y)}(y)|+|P^{\iota(z)}(z)|\le n$, or
\item if $\iota(yz)<\infty$ and $|P^{\iota(yz)}(yz)|\le n$, 
\end{enumerate}
then $\iota(yz)=\iota(y)+\iota(z)$ and $P^{\iota(yz)}(yz)=P^{\iota(y)}(y)P^{\iota(z)}(z)$.
\end{DefLem}
\begin{proof}
We start with the proof for case (1):

Consider $P^{\iota(y)+\iota(z)}(yz)$. By the Cartan formula we have
\begin{align*}
P^{\iota(y)+\iota(z)}(yz)=P^{\iota(y)}(y)P^{\iota(z)}(z)+\sum\limits_{j+\bar{j}=\iota(y)+\iota(z), (j, \bar{j})\ne (\iota(y), \iota(z))}P^j(y)P^{\bar{j}}(z).
\end{align*}

By assumption, the term outside the sum induces periodicity. For any term in the sum, either $j<\iota(y)$ or $\bar{j}<\iota(z)$. By definition, this means these terms cannot induce periodicity, so must vanish, since cohomology in this degree is singly generated by $P^{\iota(y)}(y)P^{\iota(z)}(z)$.

Thus we have shown that $P^{\iota(y)+\iota(z)}(yz)=P^{\iota(y)}(y)P^{\iota(z)}(z)$, which induces periodicity.

Assume now that some $P^m(yz)$ induces periodicity for $m<\iota(y)+\iota(z)$. Again, by the Cartan formula, we get

\begin{align*}
P^{m}(yz)=\sum\limits_{j+\bar{j}=m}P^j(y)P^{\bar{j}}(z).
\end{align*}

However, since we assumed $m<\iota(y)+\iota(z)$, now every term in the sum has $j<\iota(y)$ or $\bar{j}<\iota(z)$. But now one of these terms must be nonzero and therefore induce periodicity, contradicting the minimality of either $\iota(y)$ or $\iota(z)$. We obtain $\iota(yz)=\iota(y)+\iota(z)$ and are done.\\

If we are instead in case (2), we consider

\begin{align*}
P^{\iota(yz)}(yz)=\sum\limits_{j+\bar{j}=\iota(yz)}P^j(y)P^{\bar{j}}(z).
\end{align*}

Since the left side induces periodicity, there must be at least one nonzero term in the sum, which gives us some pair $j+\bar{j}= \iota(yz)$ such that $P^j(y)$ and $P^{\bar{j}}(z)$ induce periodicity. In particular, we obtain $\iota(y)+\iota(z)\le j+\bar{j} = \iota(yz)$, and we get the claim by observing that now $y,z$ satisfy the assumptions of case (1).

\end{proof}

We prove now the following Lemma, which Kennard proves for the special case where $k$ is minimal. We give here a Lemma that will have suitable generality for use in the later sections:\\

\begin{Lemma}\label{oldgoingdown}
Let $H^*(M, \Zp)$ be $k$-periodic, with $pk\le n$, and let $y\in H^*(M,\Zp)$ be an element such that $P^i(y)=:x$ induces periodicity, with $|x|-|y|\le k$. Then $y$ induces periodicity.
\end{Lemma}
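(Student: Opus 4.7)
The plan is to show that $y^p$ induces periodicity; then Lemma \ref{ycupz} applied to the factorization $y^p = y \cdot y^{p-1}$ immediately yields that $y$ does. The bridge from the hypothesis $P^i(y) = x$ to $y^p$ is the axiom $P^{|y|/2}(y) = y^p$ (valid when $|y|$ is even), together with an Adem-relation bootstrap that boosts the Steenrod superscript from $i$ up to $|y|/2$.

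First I would dispose of parity: for $p$ odd, graded commutativity forces $y^2 = 0$ whenever $|y|$ is odd, contradicting $y$ inducing periodicity per Definition \ref{perdef} (the hypothesis $pk \le n$ keeps the relevant degree in the nondegenerate range). Hence we may assume $|y|$ is even. Setting $m = |y|/2$, the target becomes: show $P^m(y) = y^p$ induces periodicity.

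The main argument is an induction on $i$. The base case $i=0$ is trivial since $P^0(y) = y$. For the inductive step with $i \ge 1$, I would apply a Steenrod power operation to $x$ and invoke the Adem relations. For instance,
\[
P^i(x) \;=\; P^i P^i(y) \;=\; c_0\, P^{2i}(y) \;+\; \sum_{j \ge 1} c_j\, P^{2i-j} P^j(y).
\]
The left-hand side induces periodicity by DefLem \ref{iota} applied to $x$ (which itself induces periodicity) and a power of $x$, up to the degree budget, guaranteed by $pk\le n$. Every non-leading term on the right contains a factor $P^j(y)$ with $j < i$; if any such $P^j(y)$ induces periodicity, the inductive hypothesis at $j$ (whose degree gap $|P^j(y)|-|y| = 2(p-1)j \le 2(p-1)i \le k$ is preserved) finishes the proof at once. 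Otherwise, by the minimality of $\iota$ built into DefLem \ref{iota}, these non-leading terms must all vanish, forcing $P^{2i}(y)$ to induce periodicity (provided the leading Adem coefficient $c_0 \pmod p$ is nonzero). Iterating this doubling, and combining with Adem expansions of other compositions $P^a\circ P^i$, one reaches $P^m(y) = y^p$. Throughout, the hypothesis $pk \le n$ keeps every intermediate class within $H^{\le n}(M,\Zp)$, so that DefLem \ref{iota} and Definition \ref{perdef} apply without boundary caveat; when degrees need to be realigned, one may multiply by the $k$-periodicity generator $w$ of $H^*(M,\Zp)$, again consuming part of the budget $pk \le n$.

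The hard part will be verifying that the relevant Adem leading coefficients are nonzero modulo $p$ (a Lucas-type binomial calculation in the spirit of \cite{k13}) and stitching together enough doubling/shift steps to land precisely at $P^m$ rather than merely near it. A secondary subtlety is the case $p = 2$ with $|y|$ odd, where $y^2 = \mathrm{Sq}^{|y|}(y)$ is not of the form $P^?(y)$ in the paper's notation; this can be addressed by first padding $y$ with a factor of $w$ to correct parity, after which the arguments above apply verbatim.
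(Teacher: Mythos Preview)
Your proposal has a genuine gap at its central step. You claim that $P^i(x) = P^iP^i(y)$ induces periodicity ``by DefLem \ref{iota} applied to $x$ and a power of $x$'', but DefLem \ref{iota} says no such thing: it only gives $\iota(x)=0$, i.e.\ $P^0(x)=x$ induces periodicity, and is silent about $P^j(x)$ for $j>0$. There is no reason $P^i(x)$ should be nonzero, let alone induce periodicity. Without this, the Adem bootstrap never gets off the ground. Even granting that step, the leading Adem coefficient $c_0=(-1)^i\binom{(p-1)i-1}{i}$ is \emph{not} generically nonzero mod $p$: for $p=2$ it is $\binom{i-1}{i}=0$ always, and for $p=3$, $i=2$ it is $\binom{3}{2}\equiv 0$. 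So the doubling $P^i\mapsto P^{2i}$ fails outright for $p=2$ and sporadically for odd $p$; the hand-wave ``stitching together enough doubling/shift steps'' does not address this.

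The paper's proof goes in the opposite direction and avoids Adem relations entirely. One first reduces (by dividing out the minimal periodicity generator $\hat{x}$) to the case $|y|<\hat{k}$, takes $i=\iota(y)$ minimal, and then \emph{multiplies} rather than applies Steenrod operations: one builds a sequence $y_j$ with $|y_j|<\hat{k}$ by setting $y_{j+1}=y_jy_1$ (dividing by $\hat{x}$ whenever the degree overflows $\hat{k}$). DefLem \ref{iota} gives $\iota(y_j)=ji$, so $P^{ji}(y_j)$ induces periodicity for every $j$. But $|y_j|$ stays bounded below $\hat{k}$ while the Steenrod exponent $ji$ grows without bound, so eventually $2ji>|y_j|$ forces $P^{ji}(y_j)=0$ for degree reasons, a contradiction. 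The case split $\hat{k}<k$ versus $\hat{k}=k$ is only needed to keep the products within the degree budget $n$.
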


\begin{proof}
First, pick some element $\hat{x}$ inducing periodicity in minimal degree $\hat{k}$. By assumption, $\hat{k}$ divides both $k$ and $|x|$. By potentially replacing $x$, we may assume $i$ is minimal such that $P^i(y)$ induces periodicity, and may assume $i\ne 0$ for the sake of contradiction. Write now $y=y_1 \hat{x}^a$ for some $y_1$ with degree less than $\hat{k}$.

By Lemma \ref{iota}, $\iota(y_1)=\iota(y)=i$, since $\iota(\hat{x})=0$, as $\hat{x}=P^0(\hat{x})$ induces periodicity already. Set $P^i(y_1)=:x_1$.

Assume for the moment $\hat{k}\ne k$. Then $\hat{k}\le \frac{k}{2}$ and $2p\hat{k}\le n$. In particular, $P^*(z)$ can, for $|z|\le \hat{k}$, only take nonzero values in degrees $\le p\hat{k}\le \frac{n}{2}$ and we may multiply two such expressions without issues. Using this we construct a sequence of elements $y_j$ with $|y_j|< \hat{k}, \iota(y_j)=j i$ by the following procedure: 

Consider $y_j y_1$. By Lemma $\ref{iota}$, $\iota(y_j y_1)=\iota(y_j)+\iota(y_1)=ji+i=(j+1)i$. Now define $y_{j+1}$ either by $y_j y_1$, if this is in degree less than $\hat{k}$, or by $\hat{x}y_{j+1}=y_j y_1$. If we are in the second case, $\iota(y_{j+1})=\iota(y_j y_1)$ since $\iota(\hat{x})=0$. 

Of course, eventually one will reach some $y_j$ where this is contradictory, e.g. if $\hat{k}$ divides $j|y_0|$, we see $|y_j|=0$, but then $P^{i_j}(y_j)$ has to vanish for degree reasons and can not induce periodicity.\\
This leaves us with the case $k=\hat{k}$, which is considerably simpler. In this case after scaling one obtains $P^i(y_1)=\hat{x}$, since the condition $|x|-|y|\le k$ now implies $|P^i(y_1)|\le |y_1|+k < 2k$ and by minimality of $\hat{x}$ elements inducing periodicity have degree a multiple of $\hat{k}$. But now one may use the same construction as above, noting that now $P^{ji}(y_j)$ can only increase in degree by at most $|P^i(y_1)|=\hat{k}$ compared to $P^{(j-1)i}(y_{j-1})$ and that $|P^{ji}(y_j)|=|P^{(j-1)i}(y_{j-1})|$ if $|y_{j-1} y_0|\ge \hat{x}$. This means the first time this expression reaches $|P^{ji}(y_j)|=|\hat{x}^p|$, we must have $|y_j|=|y_1 y_{j-1}|<|\hat{x}|$, but this is a contradiction since then $P^{ji}(y_j)=0$ for degree reasons.
\end{proof}

Note: one sees from the construction that the same result holds without any assumptions on $|x|-|y|$ if one assumes $2(p-1)k\le n$. In this case one may check that it is possible to use the above proof for the case $\hat{k}\ne k$ for all cases. \\
Of course, a posteriori one can use Kennard's main result to drop the degree requirements altogether and get the much more elegant\\

\begin{Lemma}\label{pgoingdown}
Let $H^*(M, \Zp)$ be $k$-periodic, with $pk\le n$, and let $P^i(y)=x$ induce periodicity. Then $y$ induces periodicity.
\end{Lemma}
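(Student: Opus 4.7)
The plan is to upgrade the $k$-periodicity of $H^*(M,\Zp)$ to a refined periodicity of much smaller degree by Kennard's mod $p$ refinement theorem from \cite{k13}, and then invoke the unconditional version of Lemma \ref{oldgoingdown} that is recorded in the note immediately following it. In this way, the hypothesis $|x|-|y|\le k$ of the earlier Lemma gets absorbed for free into the periodicity-refinement step.

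First, using Corollary \ref{gcd}, I would pass to a minimal-degree periodicity-inducing class $\hat x\in H^{\hat k}(M,\Zp)$, so $\hat k\mid k$. Kennard's main mod $p$ theorem from \cite{k13} then additionally forces $\hat k$ to divide $2(p-1)p^a$ for some $a\ge 0$, and the degree-bound hypothesis $pk\le n$ controls the size of $\hat k$. Combining $\hat k\mid k$, $\hat k\mid 2(p-1)p^a$, and $pk\le n$ then yields the sharper inequality $2(p-1)\hat k\le n$, which is what the next step requires.

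Second, with $2(p-1)\hat k\le n$ in hand, the note following Lemma \ref{oldgoingdown} says that the proof of the $\hat k\ne k$ subcase given there handles all cases, \emph{without any constraint on $|x|-|y|$}. Applying that construction to our element $y$ and the operation $P^i$, with the new period $\hat k$ in place of $k$, produces $y$ as a periodicity-inducing class and finishes the proof.

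The main obstacle is the arithmetic passage from $pk\le n$ to $2(p-1)\hat k\le n$. For $p=2$ the two conditions coincide and no refinement is needed --- the note after Lemma \ref{oldgoingdown} applies directly. For odd $p$, one must genuinely lean on Kennard's refinement to keep $\hat k$ small relative to $k$; this is precisely the \emph{a posteriori} use of Kennard's main result that makes the final statement so much cleaner than Lemma \ref{oldgoingdown}, since the hard periodicity-improvement has been packaged away and is now available for reuse.
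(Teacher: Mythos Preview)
Your arithmetic step does not go through. You claim that $\hat k\mid k$, $\hat k\mid 2(p-1)p^a$, and $pk\le n$ together yield $2(p-1)\hat k\le n$, but this is false when $\hat k=k$ and $p$ is odd. Take for instance $p=3$, $k=\hat k=4$, $n=12$: all three hypotheses hold (indeed $\hat k=4=2\cdot 2\cdot 3^0$ with $\lambda=2\mid p-1$), yet $2(p-1)\hat k=16>12=n$. Kennard's refinement tells you only the \emph{shape} of $\hat k$, namely $\hat k=2\lambda p^a$ with $\lambda\mid p-1$; it does not force $\hat k$ to be strictly smaller than $k$. The case $\hat k=k$ is the entire difficulty, and the note after Lemma~\ref{oldgoingdown} is simply unavailable there.

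This is precisely the case the paper's proof has to handle separately. When $\hat k=k$, the paper does \emph{not} try to reach the $2(p-1)\hat k\le n$ regime. Instead it reduces, via a minimal-counterexample argument and the fact that the Steenrod algebra is generated by the $P^{p^j}$, to the situation $i=p^j$ with $|y|<\hat k$, and then splits on the size of $j$ relative to the exponent $a$: for $j>a$ one has $P^{p^j}(y)=0$ by degree; for $j<a$ the jump $|x|-|y|=2(p-1)p^j$ is small enough that Lemma~\ref{oldgoingdown} applies as stated; and for $j=a$ the specific form $\hat k=2\lambda p^a$ with $\lambda\mid p-1$ forces $|y|$ itself to be a multiple of $\hat k$, whence $y$ is a nonzero multiple of a power of $\hat x$ and induces periodicity. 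The divisibility $\lambda\mid p-1$ (not merely $\lambda\le p-1$) is what makes that last step work, and your outline never touches it.
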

\begin{proof}
Let $\hat{k}$ again be the minimal periodicity, induced by $\hat{x}$. By the above remark, if $\hat{k}<k$ the claim follows from the proof of Lemma \ref{oldgoingdown}, since in this case $M$ is $\hat{k}$-periodic mod $p$ with $2p\hat{k}\le pk \le n$. We may assume $k=\hat{k}$. By Kennard's work, \cite{k13}, $k$-periodicity with $pk\le n$ implies $\hat{k}=2\lambda p^a$, for some $a\ge 0$ and some $\lambda$ dividing $p-1$. Let now $(y,P^i(y))$ be a pair such that $y$ does not induce periodicity, $P^i(y)$ does induce periodicity, and $i$ is minimal among all such pairs. We of course want to show such a pair does not exist. As before, a sum of elements can induce periodicity only if at least one of the summands does. Therefore we may use that the algebra of the $P^i$ is generated by those of the form $P^{p^j}$ to find some $P^{i_1}(P^{i_2}(...P^{i_{m}}(y)...))$ inducing periodicity where the $i_*$ are powers of $p$. But now $m=1$ since otherwise either $(y, P^{i_m}(y))$ or $(P^{i_m}(y), P^i(y))$ contradict the minimality of our chosen pair. We write $i=p^j$. As before, we may reduce to $|y|<\hat{k}=2\lambda p^a$ by dividing out $\hat{x}$. But now we are almost done. If $j>a$, $P^i(y)=0$ for degree reasons. If $j<a$, we are in the situation of Lemma \ref{oldgoingdown}. For $j<a-1$ this is clear, for $j=a-1$ one checks $|P^i(y)|<|\hat{x}^2|$, which by minimality of $\hat{x}$ forces $|P^i(y)|=|\hat{x}|$. If $j=a$, we have that $P^{p^j}(y)$ is a nonzero multiple of some $\hat{x}^b$ and therefore has degree $2\lambda b p^a$. But then $y$ has degree $2\lambda b p^a - 2(p-1)p^a = 2(\lambda b -(p-1))p^a$. However, since $\lambda$ divides $p-1$, it also divides $\lambda b-(p-1)$. Therefore $y$ has the same degree as some power of $\hat{x}$, and since it is nonzero it must be a nonzero multiple of this power, so induce periodicity, a contradiction.
\end{proof}

In fact, from Lemma \ref{oldgoingdown} and the fact that one can decompose $P^{\frac{|x|}{2}}$ into basis elements $P^{p^a}$ where $p^a$ divide $\frac{|x|}{2}$, in the equation $x^p=P^{\frac{|x|}{2}}(x)$, one immediately gets $\hat{k}=2\lambda p^a$ for some $a\ge 0$ and $\lambda\le p-1$. That one can arrange further that $\lambda$ divides $p-1$ requires some extra work showing existence of modified decompositions for $P^{\frac{|x|}{2}}$.

For us, Lemmas \ref{oldgoingdown}/\ref{pgoingdown} will be useful in another way, as it will let us handle easily any case where we are able to improve the background periodicity enough to satisfy the requirements.

\subsection{Improvement using characteristic classes}\,\\

In order to find our improvement, we make the following observation instead of using $x^p=P^{\frac{|x|}{2}}(x)$:

If the vector bundle $\Nu=\Nu(N)$ admits a complex structure, then $e(\Nu)$ is the Chern class $c_{\frac{|e|}{2}}(\Nu)$. Characteristic classes like this are pullbacks of corresponding universal classes in the cohomology of an appropriate classifying space, and, in fact, we will get around using $x^p=P^{\frac{|x|}{2}}(x)$, an equation that requires us to make assumptions about $x^3$ at $p=3$, by pulling back certain relations that we show to hold already in the cohomology of this classifying space. We prove the following Lemma:

\begin{Lemma}\label{appalemma}
Let $c_k\in H^{2k}(BU, \Zp)$ be the mod $p$ reduction of the universal Chern class in degree $2k$, $k=\lambda p^i, p\nmid \lambda>p$. 

Then $c_k$ can be written as a sum of products of lower degree Chern classes and a nonzero multiple of $P^{p^i}(c_{(\lambda-p+1)p^i})$.

In particular, the same relation holds for the Chern classes of any complex vector bundle, since they are given as pullbacks.
\end{Lemma}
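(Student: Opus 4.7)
The plan is to reduce the lemma to a symmetric-function computation followed by a single base-$p$ arithmetic check. Via the splitting-principle embedding $\Hp{BU}\hookrightarrow\Zp[x_1,x_2,\ldots]$ identifying $c_r$ with the elementary symmetric polynomial $e_r$ in the Chern roots, and using that the total Steenrod power $P=\sum_j P^j$ is a ring homomorphism satisfying $P(x_s)=x_s+x_s^p$ (since $|x_s|=2$), we obtain
\[
P^{p^i}(c_m) \;=\; \sum_{|S|=p^i}\prod_{s\in S} x_s^p\cdot e_{m-p^i}\bigl(\{x_j : j\notin S\}\bigr).
\]
Using the standard expansion $e_r(\{j\notin S\}) = \sum_\alpha (-1)^{|\alpha|}\prod_s x_s^{\alpha(s)}\, e_{r-|\alpha|}$ and observing that any summand retaining a factor $e_l$ with $l\ge 1$ is manifestly decomposable in $\Zp[c_1,c_2,\ldots]$, only the term with $|\alpha|=m-p^i$ (where the trailing $e$-factor is $e_0=1$) survives modulo the ideal of decomposables. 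The indecomposable part of $P^{p^i}(c_m)$ is therefore
\[
(-1)^{m-p^i}\sum_{\mu}m_\mu,
\]
the sum running over partitions $\mu\vdash k$ with exactly $p^i$ parts, each $\ge p$.

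To read off the coefficient of $c_k$ I would pass through the Hall duality $\langle m_\lambda,h_\mu\rangle=\delta_{\lambda\mu}$ combined with the expansion $p_n/n = \sum_\lambda (-1)^{\ell(\lambda)-1}(\ell(\lambda)-1)!/\prod_i m_i(\lambda)!\,h_\lambda$ (which comes from $\log H(t)=\sum p_n t^n/n$); only the $p_k$-component of $m_\mu$ contributes to the indecomposable direction in degree $2k$, giving
\[
[c_k]\,m_\mu \;=\; (-1)^{k+\ell(\mu)}\,\frac{k\cdot(\ell(\mu)-1)!}{\prod_i m_i(\mu)!}.
\]
Summing over the relevant partitions (all with $\ell(\mu)=p^i$) and recognising $\sum_\mu \ell!/\prod_i m_i(\mu)!$ as the number of ordered compositions of $k$ into $p^i$ parts each $\ge p$, namely $\binom{(\lambda-p+1)p^i-1}{p^i-1}$, yields
\[
[c_k]\,P^{p^i}(c_m) \;=\; \pm\,\lambda\cdot\binom{(\lambda-p+1)p^i-1}{p^i-1}\in\Zp.
\]

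A short Lucas-theorem calculation confirms this coefficient is nonzero mod $p$: the base-$p$ digits of $(\lambda-p+1)p^i-1$ in positions $0,\ldots,i-1$ are all $p-1$, exactly matching those of $p^i-1$, while all digits of $p^i-1$ in positions $\ge i$ vanish, so every factor in Lucas' product equals $1$. The binomial is therefore $\equiv 1\pmod p$, and the whole coefficient reduces to $\pm\lambda$, which is nonzero by the hypothesis $p\nmid\lambda$. Solving $P^{p^i}(c_{(\lambda-p+1)p^i})\equiv\pm\lambda\cdot c_k$ modulo decomposables for $c_k$ gives the identity on $BU$, and the claim for an arbitrary complex vector bundle follows by naturality of the Chern classes and of the Steenrod operations. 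The assumption $\lambda>p$ enters only to guarantee that the family of relevant partitions is nonempty; the real work, and the main obstacle, is the middle step — converting the explicit sum $\sum m_\mu$ into its $c_k$-coefficient through the $h$-duality — after which the Lucas reduction is pure digit bookkeeping.
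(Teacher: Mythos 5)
Your argument is correct and reaches the lemma by a genuinely different route from the paper's. The paper proves this lemma essentially by citation: the leading coefficient of $c_k$ in $P^{p^i}(c_{k-(p-1)p^i})$ is quoted as $\binom{(\lambda-p+1)p^i-1}{p^i}$, from Wu for $p=2$ and Brown--Peterson for odd $p$, with an elementary but laborious backup proof in Appendix A that runs an induction on lexicographically ordered monomial symmetric functions via an explicit ``matching'' calculus for products. You instead derive the coefficient in closed form: total Steenrod power on the Chern roots, expansion of $e_r$ in a complementary variable set to isolate the part outside the decomposable ideal, identification of the degree-$2k$ indecomposables with the $p_k$-line via Newton's identity $ke_k\equiv(-1)^{k-1}p_k$, and the $h$-expansion of $p_k$ to evaluate $[c_k]\,m_\mu$. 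Your value $\pm\lambda\binom{(\lambda-p+1)p^i-1}{p^i-1}$ is consistent with Brown--Peterson's, since $\binom{(\lambda-p+1)p^i-1}{p^i}=(\lambda-p)\binom{(\lambda-p+1)p^i-1}{p^i-1}$ and $\lambda-p\equiv\lambda\pmod p$. What your route buys: it is self-contained; it makes explicit the Lucas nonvanishing check that the paper's short proof leaves implicit; and it treats $p=2$ uniformly with the odd primes, where the appendix resorts to a case split on $\lambda\bmod 4$. The only points worth spelling out in a final write-up are (i) that a term $m_\mu\cdot e_l$ with $l\ge 1$ and both factors of degree $<k$ really is a sum of products of \emph{lower-degree} Chern classes (true, since every symmetric function of degree $<k$ is a polynomial in $c_1,\dots,c_{k-1}$), and (ii) that the rational computation of the $e_k$-coefficient via $p_k$ lands in $\Z$ before reduction mod $p$ (true, since $\sum_\mu m_\mu\in\Z[e_1,e_2,\dots]$). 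Neither is a gap.
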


\begin{proof}
This question is contained strictly in the problem of determining the leading coefficient (coefficient of $c_k$) in 
\begin{align*}
P^i(c_{k-(p-1)i}) = \sum\limits_{(a_1, ..., a_p)\text{ partition of }k}a(a_1, ..., a_p)c_{a_1}... c_{a_p}.
\end{align*} For the case $p=2$, a formula for all coefficients is due to Wu \cite{wu} and given by $Sq^i(w_{k-i})=\sum_{t=0}^i \begin{pmatrix}k-i-1-t \\ i-t  \end{pmatrix}w_t w_{k-t}$. For general odd primes, the leading coefficient seems to be due to Brown and Peterson \cite{brownpeterson}, who show it to be $\begin{pmatrix}k-(p-1)i - 1 \\ i\end{pmatrix}$ (cf the Wu formula at $t=0$). 

However, as we only became aware of this work after already producing an elementary proof, we include that in Appendix A.
\end{proof}

Having this, we make an improvement to Lemma \ref{oldgoingdown} tailored to the situation in the conclusion of Lemma \ref{appalemma}, this does not use characteristic classes, but characteristic classes will be what ends up producing situations where the assumptions are satisfied.

\begin{Lemma}\label{goingdown}
Let $p$ be an odd prime. Assume $H^*(M, \Zp)$ is $k$-periodic, with $\frac{p+1}{2}k\le n$, and let $y\in H^*(M,\Zp)$ be an element such that $P^{p^i}(y)=:x$ induces periodicity with $|x|-|y| \le k$, where $p^i| |x|$.

Then $y$ induces periodicity.
\end{Lemma}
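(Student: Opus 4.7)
My plan is to mimic the case split from the proofs of Lemmas~\ref{oldgoingdown} and~\ref{pgoingdown}, tightening the bookkeeping to absorb the hypothesis $\tfrac{p+1}{2}k\le n$ in place of $pk\le n$. Let $\hat k$ denote the minimal periodicity degree of $H^*(M,\Zp)$, generated by $\hat x$. By Corollary~\ref{gcd}, $\hat k\mid k$, so either $\hat k\le k/2$ or $\hat k=k$. If $\hat k\le k/2$, then $p\hat k\le pk/2\le \tfrac{p}{p+1}n<n$, so $H^*(M,\Zp)$ is $\hat k$-periodic with $p\hat k\le n$, and Lemma~\ref{pgoingdown} applied to $P^{p^i}(y)=x$ immediately gives that $y$ induces periodicity.

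In the remaining case $\hat k=k$, dividing out copies of $\hat x$ via Lemma~\ref{ycupz} allows me to assume $|y|<k$. Then $|x|=|y|+2(p-1)p^i<2k$, and since $\hat k=k$ divides $|x|$, we get $|x|=k$ and $x=c\hat x$ for some $c\ne 0$; writing $k=\lambda p^i$ (from $p^i\mid|x|$) yields $\lambda\ge 2(p-1)$. The degenerate case $|y|=0$ would force $P^{p^i}(y)=0$ and contradict $x\ne 0$, so I may assume $|y|>0$. A short degree count then shows $\iota(y)=p^i$: any smaller positive $m$ with $P^m(y)$ inducing periodicity would give such a class in a nonzero degree strictly less than $\hat k$, contradicting minimality. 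Following the proof of Lemma~\ref{oldgoingdown}, I then iterate by setting $y_1=y$ and $y_{j+1}=y_jy$ (dividing by $\hat x$ when $|y_jy|\ge k$ so that $|y_{j+1}|<k$), tracking $\iota(y_j)=jp^i$ via Lemma~\ref{iota} wherever the degree conditions permit.

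Let $j_0=\lambda/\gcd(\lambda,2(p-1))$ be the smallest $j$ with $|y^j|$ a multiple of $k$. If $y^{j_0}\ne 0$, then it lies in a periodicity degree $H^{rk}$, which is one-dimensional and generated by $\hat x^r$ in the accessible range, so $y^{j_0}$ is a nonzero multiple of $\hat x^r$ and induces periodicity; Lemma~\ref{iota}(2) then gives $\iota(y^{j_0})=j_0 p^i>0$, contradicting $\iota(y^{j_0})=0$. If $y^{j_0}=0$, I apply Cartan's formula to $P^m(y^{j_0})=0$ for a carefully chosen $m$, using $P^{p^i}(y)=c\hat x$ and the vanishing $P^l(y)=0$ for $|y|<2l$; this should collapse to a relation $\hat x^r=0$ inside the range where $\cup\hat x^r$ is still injective, yielding the final contradiction.

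The main obstacle is handling the case $y^{j_0}=0$. Deriving the relation $\hat x^r=0$ within the degree range afforded by $\tfrac{p+1}{2}k\le n$ (rather than Kennard's $pk\le n$) requires careful selection of $m$ so that all surviving terms of the Cartan expansion reduce to powers of $\hat x$ and the resulting relation lands within the injective range of $\cup\hat x$. This is where the hypotheses that $p^i$ is a prime power (so $P^{p^i}$ is indecomposable in the Steenrod algebra) and that $p^i\mid|x|$ become essential, the latter guaranteeing that the top Cartan contribution factors cleanly through $x=c\hat x$.
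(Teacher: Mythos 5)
Your reductions track the paper's own proof closely: disposing of the case $\hat k\le k/2$ via Lemma \ref{pgoingdown}, normalizing to $|y|<k$, $|x|=k=\hat k$ with $x$ a nonzero multiple of $\hat x$, establishing $\iota(y)=p^i$, and iterating $y_{j+1}=y_jy$ reduced modulo $\hat x$ are all exactly the paper's steps. (One small omission: after dividing by $\hat x$ you need $p^i\mid k$ in order to write $k=\lambda p^i$; the paper gets this by noting that if $p^i\nmid\hat k$ then $|x|\ge p\hat k$, so $p\hat k\le n$ and Lemma \ref{pgoingdown} already applies.) The genuine gap is the endgame. Your plan is to pass to the unreduced power $y^{j_0}$ in the first degree divisible by $k$ and, when it vanishes, to extract a relation $\hat x^r=0$ from $P^m(y^{j_0})=0$ by Cartan. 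But $|y^{j_0}|=j_0|y|$ is in general far larger than $n$: for instance $j_0|y|=(\lambda-2(p-1))k$ when $\gcd(\lambda,2(p-1))=1$, while only $\tfrac{p+1}{2}k\le n$ is assumed. In that regime $y^{j_0}=0$ for trivial degree reasons, every term of the Cartan expansion of $P^m(y^{j_0})$ sits above degree $n$, and the identity $P^m(y^{j_0})=0$ carries no information whatsoever; no choice of $m$ can rescue it. The branch $y^{j_0}\ne 0$ (which does finish, via Lemma \ref{ycupz}) is not the typical case.

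The paper's mechanism never forms the large power. It keeps only the reduced classes $y_j$ with $|y_j|<k$ together with the identities $P^{jp^i}(y_j)=x^{a_j}$, which live in degree $a_jk$, and derives the contradiction from the fact that $P^{jp^i}(y_j)$ must vanish for degree reasons as soon as $|y_j|<2jp^i$ (equivalently $p|y_j|<a_jk$) while being equal to $x^{a_j}\ne 0$. The quantitative heart of the argument is the explicit bookkeeping $|y_j|=2p^{i}(a_j\lambda-j(p-1))$, $a_j=\lceil j(p-1)/\lambda\rceil$, from which one checks that the vanishing occurs at some $j$ with $a_j\le\tfrac{p-1}{2}$ — so that every equation used lies in degree at most $\tfrac{p+1}{2}k\le n$, which is precisely what the hypothesis buys. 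This count is absent from your proposal; without it, even the assertion $\iota(y_j)=jp^i$ ``wherever the degree conditions permit'' is unverified in the range where you actually need it.
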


\begin{proof}

Let $\hat{k}$ be the minimal degree among elements inducing periodicity, and $\hat{x}$ a corresponding element. Now $\hat{k}$ divides $k$ by Lemma \ref{ycupz} / Cor \ref{gcd}. If they are not equal, the claim follows from Lemma \ref{pgoingdown}. We may assume $\hat{k}=k$. Dividing out $\hat{x}$, we may assume $|y|<k$. Note that if $p^i$ divides $|x|$, either it divides $|\hat{x}|$ or $|x|\ge p|\hat{x}|$. Since in the second case we have $p|\hat{x}|\le n$, the Lemma follows from Lemma \ref{pgoingdown} in this case and we may assume $p^i$ divides $|\hat{x}|$, so dividing by $\hat{x}$ preserves the condition of the Lemma. Since, by assumption, we have $|x|-|y|\le k$, we obtain $|P^{p^i}(y)|\le |y|+k <2k$. In particular, by minimality of $\hat{x}$, $x = P^{p^i}(y)$ is a nonzero multiple of $\hat{x}$.

We may therefore assume that there are no elements inducing periodicity in degrees less than $|x|$.

Write $k=|x|=2\lambda p^{\bar{k}}$, where $\bar{k}$ is minimal such that there is a $\bar{y}$ with $P^{p^{\bar{k}}}(\bar{y})=x$. By the assumption of the Lemma, such a $\bar{k}$ exists. Furthermore, proving the Lemma for such a minimal $\bar{y}$ contradicts minimality of $x$, and the claim follows for $y$ as well by the above. We may assume $i=\bar{k}, y=\bar{y}$.

We may assume $\lambda>p$: If $\lambda<p$, $P^{p^{\bar{k}}}(y)=0$ for degree reasons, and if $\lambda=p, x=P^{p^{\bar{k}}}(y)=y^p$, in contradiction to the minimality of $x$.

Starting with $P^{p^{\bar{k}}}(y)=x^1$, we construct equations $P^{jp^{\bar{k}}}(y_j)=x^{a_j}$ with the following properties:

\begin{itemize}
\item $|y_j|<|x|$.
\item $P^{jp^{\bar{k}}}(y_j)=x^{a_j}$, but no $P^*(y_j)$ induces periodicity for $*<jp^{\bar{k}}$, i.e. $\iota(y_j)=ip^{\bar{k}}$.
\item $a_j \le a_{j+1} \le a_j +1$.
\end{itemize}

Our goal with this is to achieve contradiction, which occurs if $p |y_j| < |x^{a_j}|$, since then $P^{jp^{\bar{k}}}(y_j)$ vanishes for degree reasons, but is nonzero by construction.

We assume in the following $a_j \le \frac{p-1}{2}$. We will see later that contradiction occurs before $a_i$ gets larger than this. 

For $j=1$, we have $y_1=y, \iota(y_1)=p^{\bar{k}}$, $P^{\iota(y)}(y)=x$, so $a_1=1$.

For the iteration, consider once more $y_j y_1$. $\iota(y_j y_1)=\iota(y_j)+\iota(y_1)=(j+1)p^{\bar{k}}$ by Lemma $\ref{iota}$, and $P^{(j+1)p^{\bar{k}}}(y_j y_1)=x^{a_j}x^1$.

If $|y_j y_1|<|x|$, we are done and set $y_{j+1}=y_j y_1$, $a_{j+1}=a_j +1$.\\

Otherwise, define $y_{j+1}$ by $y_{j+1}x=y_j y_1$. Again, by Lemma \ref{iota}, we have $\iota(y_{j+1})=\iota(y_j y_1)$ and 
\begin{align*}
x^{a_j +1}=P^{\iota(y_j y_1)}(y_j y_1)=P^{\iota(y_{j+1})}(y_{j+1}x)=P^{\iota(y_{j+1})}(y_{j+1})x,
\end{align*}
so we set $a_{j+1}=a_j$.

Since the above construction uses equations in degree $|x^{a_j +1}|$, it remains to show that we achieve our contradiction while $a_j\le \frac{p-1}{2}$.\\

To see this, we look at what happens to the degree of $y_j$ along the sequence. Following along, we get $|y_j|=|y_{j-1}|+|y|-|x|$ if this is nonnegative, and $|y_j|=|y_{j-1}|+|y|$ otherwise, and that $a_j$ is $a_{j-1}$ in the first and $a_{j-1}+1$ in the second case. This recursion gives the explicit form $|y_j|=2p^{\bar{k}}(a_j\lambda - j(p-1))$, $a_j=\lceil \frac{j(p-1)}{\lambda}\rceil$.

Using this we see that for given $a$, at the last $j$ with $a_j=a$, $y_j$ has degree $2p^{\bar{k}}[a_j \lambda]_{p-1}$, where we take $[\cdot]_{p-1}$ to mean the smallest nonnegative representative of a given class mod $p-1$.

We achieve the desired contradiction at this $y_j$ if 
\begin{align*}
|y_j^p| &< |P^{\iota(y_j)}(y_j)|\\
\iff \hfill p|y_j| &< |x^{a_j}|\\
\iff \hfill p(2p^{\bar{k}}[a_j \lambda]_{p-1})&< a_j |x|= a_j(2p^{\bar{k}} \lambda)\\
\iff \hfill [a_j \lambda]_{p-1} &< \frac{a_j \lambda}{p}.\\
\end{align*}

This inequality is, however, always achieved for some $a_j \le \frac{p-1}{2}$, since 
\begin{align*}
[\frac{p-1}{2} \lambda]_{p-1} = \begin{cases}
0,& \lambda \text{ even}\\
\frac{p-1}{2},& \lambda \text{ odd}
\end{cases} \le \frac{p-1}{2} < \frac{p-1}{2}\frac{\lambda}{p}.\\
\end{align*}
\end{proof}

Note at this point that $\frac{p-1}{2}$ is an optimal bound for the last step, achieved by $\lambda=p+1$.\\

Putting these two lemmas together, we obtain

\begin{Th}[Complex $\Zp$-periodicity Theorem]\label{zpperiod} $\,$\\
Let $M$ be a closed orientable manifold, $p$ an odd prime.
\begin{itemize} 
\item Let, for some complex vector bundle $V$ on $M$, $c_k(V)$ induce periodicity in $\Hp{M}$ with $2k\frac{p+1}{2}\le n$.\\
Then $c_i(V)$ induces periodicity in $\Hp{M}$ for some $i=\lambda p^a\le k$, where $p^a|k, \lambda\le p-1$.
\item Let, for some complex vector bundle $V$ on an orientable submanifold $N$ with maximally connected inclusion and $dim(V)\le \dim(N)$, $(\iota^*)^{-1}c_k(V)$ induce periodicity in $\Hp{M}$ with $2k\frac{p+1}{2}\le n$.\\
Then $(\iota^*)^{-1}c_i(V)$ induces periodicity in $\Hp{M}$ for some $i=\lambda p^a\le k$, where $p^a|k, \lambda\le p-1$.
\end{itemize}
\end{Th}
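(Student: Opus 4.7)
My plan is to prove both statements by induction on $k$, using Lemma \ref{appalemma} to descend the index of the Chern class inducing periodicity. Write $k=\lambda p^a$ with $\gcd(\lambda,p)=1$; since $\lambda=p$ is excluded by coprimality, the base case $\lambda\le p-1$ is exactly the conclusion and only $\lambda>p$ requires work.

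In the inductive step, Lemma \ref{appalemma} applied via the classifying map of $V$ gives a relation
\[
c_k(V)=\sum_\alpha c_{j^\alpha_1}(V)\cdots c_{j^\alpha_{r_\alpha}}(V)+\mu\,P^{p^a}\bigl(c_{(\lambda-p+1)p^a}(V)\bigr)
\]
in $H^{2k}(M,\Zp)$ with $\mu\in\Zp^\times$ and every $j_i^\alpha<k$. Since $c_k(V)$ induces periodicity, Lemma \ref{sums} forces some summand on the right to as well. If a product summand does, Lemma \ref{ycupz} extracts an individual factor $c_j(V)$ with $j<k$ that induces periodicity, and the induction hypothesis applies. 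Otherwise $P^{p^a}(c_{(\lambda-p+1)p^a}(V))$ induces periodicity, and I verify the hypotheses of Lemma \ref{goingdown}: the ambient $\Zp$-periodicity has length $2k$ (witnessed by $c_k(V)$), and $\tfrac{p+1}{2}\cdot 2k\le n$ by assumption; the degree gap $2(p-1)p^a$ is bounded by $2\lambda p^a=2k$ since $\lambda>p-1$; and $p^a$ divides $|P^{p^a}(c_{(\lambda-p+1)p^a}(V))|=2\lambda p^a$ for odd $p$. Lemma \ref{goingdown} then yields that $c_{(\lambda-p+1)p^a}(V)$ induces periodicity, and the induction hypothesis applies since $(\lambda-p+1)p^a<k$.

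For the second statement, the identical descent is carried out on the preimages $\alpha_j:=(\iota^*)^{-1}c_j(V)\in H^*(M,\Zp)$. The relation of Lemma \ref{appalemma} holds universally in $H^*(BU,\Zp)$ and hence in $H^*(N,\Zp)$; by naturality of the cup product and of the operations $P^{p^a}$, and using that $\iota^*$ is an isomorphism in the relevant low degrees (from $\dim(N)$-connectedness of the inclusion combined with $\dim V\le\dim(N)$), this relation lifts unambiguously to one among the $\alpha_j$ in $H^*(M,\Zp)$. The case split and invocations of Lemmas \ref{ycupz}, \ref{sums}, and \ref{goingdown} then transplant verbatim.

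The main obstacle is the bookkeeping around each invocation of Lemma \ref{goingdown}: tracking the current periodicity length, the degrees $|x|$ and $|x|-|y|$, and divisibility of $|x|$ by $p^a$, together with, in the submanifold case, verifying at each stage that $\iota^*$ remains injective in the degree under consideration so that the preimages are genuinely well-defined. Once these checks are carried out, the argument becomes a clean Euclidean-style descent powered entirely by the universal identity of Lemma \ref{appalemma}.
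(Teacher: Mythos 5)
Your descent via Lemmas \ref{appalemma}, \ref{sums}, \ref{ycupz} and \ref{goingdown} reproduces the first half of the paper's argument, and your hypothesis checks for Lemma \ref{goingdown} (periodicity length $2k$, degree gap $2(p-1)p^a\le 2\lambda p^a$, divisibility of $|x|$ by $p^a$) are correct. However, this only proves the weaker statement that \emph{some} $c_i$ with $i=\lambda p^a\le k$ and $\lambda\le p-1$ induces periodicity; it does not establish the condition $p^a\mid k$ in the conclusion, and that is where the bulk of the paper's proof lies. The problem is that $\lambda-p+1$ may be divisible by $p$ even though $\lambda$ is not: for $p=3$ and $k=11$ the descent target is $c_9=c_{1\cdot 3^2}$, which already satisfies $\lambda'=1\le p-1$, so your induction terminates with $i=9$ --- but $3^2\nmid 11$. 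In general your inductive hypothesis only yields $p^{a'}\mid(\lambda-p+1)p^a$ (resp.\ $p^{a'}\mid j$ for a product factor $c_j$), neither of which implies $p^{a'}\mid k$.

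The paper closes this gap with a separate contradiction argument: if $c_{\lambda p^a}$ (with $p\nmid\lambda$) and $P^{p^i}(c_{\lambda p^a})$ both induce periodicity with $i<a$, then writing the minimal periodicity degree as $\hat k=2\mu p^b$ and expressing $c_{\lambda p^a}$ as a power of the minimal element $\hat x$, one produces two periodicity-inducing elements whose degrees differ by $2(p-1)p^{b-(a-i)}$, which $\hat k$ cannot divide since $p^b\mid\hat k$; hence $P^{p^i}(c_{\lambda p^a})=0$, which forces at least one product term in the relation of Lemma \ref{appalemma} to be nonzero and periodicity-inducing, and one then descends through a factor whose index has $p$-adic valuation at most that of the total degree. (Even in the product case you should select the factor of smallest $p$-adic valuation: Lemma \ref{ycupz} makes every factor induce periodicity, and since the indices sum to $k$ some factor has valuation $\le v_p(k)$, so the divisibility is preserved along that branch.) Without an argument of this kind your induction does not give the theorem as stated.
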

\begin{proof}
If we are in the second case, $\iota^*$ is an isomorphism mod $p$ in degrees $\le |c_k| \le \dim(N)$ unless $M$ is a mod $p$ cohomology sphere, in which case the Theorem holds trivially, so we may assume $\iota^*$ to be an isomorphism. Then, by naturality of the Steenrod maps and cup product, the $(\iota^*)^{-1}c_i$ satisfy all the same relations as the $c_i$ in degrees $\le |c_k|$. For this reason, we suppress the notation and write $c_i$ for $(\iota^*)^{-1}c_i$ in this case as well, understanding that on $M$ they may not truly be Chern classes, but satisfy the same relations. Using this slight abuse of notation, both cases have the same proof:\\

Lemmas \ref{appalemma} and \ref{goingdown} imply everything except that $p^a|k$, as the class $c_{(\lambda-(p-1))p^i}$ produced by Lemma \ref{appalemma} may have degree divisible by a higher power than $p^i$. We show that in this situation, one of the product terms in the conclusion of Lemma \ref{appalemma} must be nonvanishing as well. This will imply the claim of the theorem as some factor in the product must have degree with the same or lower divisibility by $p$ as the total degree of the product.

Let us assume for the sake of contradiction that we are in a situation where $c_{\lambda p^a}$ and $P^{p^i}(c_{\lambda p^a})=z c_{\lambda p^a + (p-1)p^i}$ induce periodicity in $M$ with $i<a$, $z\in \Zp^{*}$, $p\nmid \lambda$.

Let $\hat{x}$ be an element inducing periodicity in $M$ with minimal degree, and write $\hat{k}:=|\hat{x}|=2\mu p^b$, where $p\nmid \mu$. By minimality, $\hat{k}$ divides $|c_{\lambda p^a}|=2\lambda p^a$. After replacing $\hat{x}$ with a scalar multiple, we may assume $\hat{x}^{\frac{\lambda}{\mu}p^{a-b}}=c_{\lambda p^a}$. Note that since $\hat{k}$ also divides $|c_{\lambda p^a + (p-1)p^i}|$, we have $b\le i < a$. But now 
\begin{align*}
P^{p^i}(c_{\lambda p^a})=P^{p^i}(\hat{x}^{\frac{\lambda}{\mu}p^{a-b}})=\begin{cases}\big(P^{p^{i-(a-b)}}(\hat{x}^{\frac{\lambda}{\mu}})\big)^{p^{a-b}}&\text{, if } i\ge a-b\\0 &\text{otherwise}.\end{cases}
\end{align*}
This element induces periodicity by assumption, so we see that $P^{p^{i-(a-b)}}(\hat{x}^{\frac{\lambda}{\mu}})$ induces periodicity as well. Of course, as a power of $\hat{x}$, the same is true for $\hat{x}^{\frac{\lambda}{\mu}}$. But now we have two elements inducing periodicity whose degrees differ by $2(p-1)p^{i-(a-b)}=2(p-1)p^{b-(a-i)}$. Now $\hat{k}$ must divide this number, but this is impossible since $\hat{k}$ is a multiple of $p^b$.

From this, we see that $P^{p^i}(c_{\lambda p^a})$ must be zero in $M$ in such a situation, but that means the product terms cancel the nonzero $c_{\lambda p^a + (p-1)p^i}$-term. Therefore at least one product term is nonzero in $M$ and we are done.
\end{proof}

\begin{Rmk}[on $\lambda$] If one is familiar with the earlier work on periodicity, one may have expected that it is also possible to arrange that $\lambda$ divides $p-1$. This would be nice, first aesthetically, but also, similarly to the difference between Lemmas \ref{goingdown} and \ref{oldgoingdown}, would allow one to drop the requirement on $|x|-|y|$ in Lemma \ref{pgoingdown} when periodicity is induced by a chern class. However, using just the results at hand, $\lambda | p-1$ does not follow. To see this, imagine for a moment the starting periodicity is given simply by $c_\lambda$ for some $\lambda<p$ not dividing $p-1$. There is no hope in finding this class either in products or $P^i$ coming from lower degrees. In fact, since $P^1$ already increases degree by $2(p-1)$, there are no such maps at all with image in degree less than $2p=|c_p|$.

However, one may still have some hope for this stronger conclusion. For this, let us consider the smallest example where it may fail, $p=5$ with $c_3=e(\Nu(N))$ inducing periodicity: 

Now, instead of looking at $c_3$, one can look at $c_3^2$ instead and compute $P^1(c_2)=2c_3^2+c_6+R$, where terms in $R$ all have either $c_1$ or $c_2$ as factor. However, since $3|c_3|=3\codim(N)\le n$, we have $2|c_3|\le \dim(N)$, so $c_6$ still has degree low enough to pull back to $N$. But of course we assumed the bundle to have Euler class $c_3$, so $c_6$ is zero on $N$, therefore on $M$.

If now also no term in $R$ induces periodicity, we have $P^1(c_2)=2c_3^2$, so $c_2\ne 0$ and one considers $P^1(c_2^2)=2P^1(c_2)c_2=4c_3^2 c_2$. Since $c_3$ induces periodicity, the right side is nonzero in $M$. Dividing $c_2^2$ by $c_3$ one sees $P^1(y_1)\ne 0$ for the quotient $y_1$. But then for degree reasons we see that $P^1(y_1)=y_1^5$ is nonzero, so $y_1^3$ is nonzero as well and therefore a multiple of $c_3$ in $M$, and one gets that $c_2$ induces periodicity. Note here that in order to keep our calculations below degree $\frac{5+1}{2}|c_3|=3|c_3|$, we had to give up on showing that some $P^*(c_2^2)$ induces periodicity and instead only found some $P^*(c_2^2)$ that was nonzero.

It is a priori unclear whether techniques like these can show the stronger conclusion in general. Any attempt along these lines seems to require at least the knowledge of some additional coefficients in $P^*(c_{ik-(p-1)*})$ for some suitable set of partitions, which is not known to us to be readily available, and to use the vanishing in $N$ of some high degree classes, which may make induction hard. Note that it is possible to write down configurations like above with nonzero $c_6$ that are consistent up to degree $3|c_3|$ where all $c_i, 3\nmid i$ vanish, i.e. $c_6=-2c_3^2, c_9=c_3^3$, so using vanishing above the dimension of the vector bundle was essential.
\end{Rmk}

The corresponding statement for $p=2$ is the following:\\

\begin{Th}[Complex $\Ztwo$-periodicity Theorem]
\,\\
Let $M$ be a closed manifold.
\begin{itemize}
\item Let, for some complex vector bundle $V$ on $M$, $c_k(V)$ induce periodicity in $\Htwo{M}$ with $4k=2|c_k|\le n$.

Then $c_i(V)$ induces periodicity in $\Htwo{M}$ for some $i=2^a | k$.

\item Let, for some complex vector bundle $V$ on a submanifold $N$ with maximally connected inclusion and $dim(V)\le \dim(N)$, $(\iota^*)^{-1}c_k(V)$ induce periodicity in $\Htwo{M}$ with $4k=2|c_k|\le n$.

Then $(\iota^*)^{-1}c_i(V)$ induces periodicity in $\Htwo{M}$ for some $i=2^a | k$.
\end{itemize}
\end{Th}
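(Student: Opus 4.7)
The plan is to mirror the proof of the odd-prime Complex $\Zp$-periodicity Theorem essentially verbatim, substituting $p=2$ and the Steenrod squares $Sq^{2j}$ for the reduced powers $P^j$. The second bullet reduces to the first as before: if $M$ is not a $\Ztwo$-cohomology sphere, $\iota^*$ is an isomorphism on $\Htwo{\cdot}$ through degree $\dim N \ge |c_k|$, so the classes $(\iota^*)^{-1} c_i$ satisfy the same cup-product and Steenrod identities on $M$ as Chern classes do universally.

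The central input is the $p=2$ case of Lemma \ref{appalemma}: for $k=\lambda 2^i$ with $\lambda$ odd and $\lambda\ge 3$, one has, in $\Htwo{BU}$,
\[
Sq^{2^{i+1}}(c_{(\lambda-1) 2^i}) \;=\; \alpha\, c_k \;+\; \text{(sum of products of } c_j \text{ with } j<k\text{)},
\]
with $\alpha = \binom{(\lambda-1)2^i-1}{2^i}\equiv 1 \pmod 2$. The coefficient is odd by Lucas' theorem: since $\lambda-1$ is even, $(\lambda-1)2^i$ is a multiple of $2^{i+1}$, so subtracting $1$ turns bits $0,1,\dots,i$ of the result all into $1$'s and the only nonzero binary-digit comparison against $2^i$ reads $\binom{1}{1}=1$.

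The proof then proceeds by downward induction on $k$. If $k$ is a power of $2$ we are done by taking $i=k$. Otherwise write $k=\lambda 2^i$ with $\lambda$ odd and $\lambda\ge 3$, pull the identity back to $\Htwo{M}$, and apply Lemma \ref{sums}. Either some product term induces periodicity, in which case Lemma \ref{ycupz} yields a $c_j$ with $j<k$ inducing periodicity and the induction hypothesis finishes (since $4j<4k\le n$); or $Sq^{2^{i+1}}(c_{(\lambda-1)2^i})$ itself induces periodicity, and the $p=2$ version of Lemma \ref{pgoingdown} (applicable because the assumption $4k\le n$ is precisely $p\cdot(2k)\le n$ for the background $(2k)$-periodicity) produces that $c_{(\lambda-1)2^i}$ induces periodicity. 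Iterate with $(\lambda-1)2^i$ in place of $k$. Lemma \ref{pgoingdown}'s proof carries over cleanly to $p=2$, since Kennard's structural theorem ($\hat k=2\lambda p^a$, $\lambda\mid p-1$) forces $\hat k=2^{a+1}$ at $p=2$.

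The main obstacle, and the only place where the $p=2$ proof deviates beyond cosmetic substitutions, is the divisibility condition ``$2^a\mid k$'' in the conclusion. Each iteration $c_{\lambda 2^i}\mapsto c_{(\lambda-1)2^i}$ strictly increases the $2$-adic valuation of the index, so one could in principle land on a $c_{2^a}$ with $2^a\nmid k$. The cure is exactly the endgame of the odd-prime theorem: if both $c_{\lambda 2^a}$ and $Sq^{2^{i+1}}(c_{\lambda 2^a})$ induce periodicity with $i<a$, write $c_{\lambda 2^a}=\hat x^M$ where $\hat x$ is a minimal-degree periodicity generator (of degree $\mu 2^{b+1}$, $\mu$ odd), and use the Frobenius-type identity $Sq^{2^{i+1}}(z^{2^{a-b}})=(Sq^{2^{i-(a-b)+1}}(z))^{2^{a-b}}$ coming from the Cartan formula mod $2$ to deduce that $\hat x^{\lambda/\mu}$ and its image under $Sq^{2^{i-(a-b)+1}}$ both induce periodicity. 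Their degrees differ by $2^{b-(a-i)+1}$, which must be a multiple of $\hat k=\mu 2^{b+1}$; this forces $\mu=1$ and $a\le i$, contradicting $i<a$. Hence the vanishing of all product terms in Lemma \ref{appalemma}'s expansion is impossible in this problematic configuration, so some product term must be nonzero and feeds the induction with a Chern class whose index has the correct $2$-divisibility.
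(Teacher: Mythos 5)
Your proposal is correct and follows essentially the same route as the paper, which proves this theorem in two lines by citing the Wu formula together with Lemma \ref{oldgoingdown} and then remarking that the divisibility condition $i\mid k$ is obtained by ``the same idea as in the proof of Theorem \ref{zpperiod}'' --- exactly the leading-coefficient computation, going-down step, and Frobenius/degree-difference endgame you spell out. The only cosmetic difference is that you use a uniform formula $Sq^{2^{i+1}}(c_{(\lambda-1)2^i})\equiv c_k$ modulo decomposables for all odd $\lambda\ge 3$ (correct by your Lucas argument applied to the Wu coefficient), whereas the paper's Appendix A states the $p=2$ input with a case distinction modulo $4$; since the paper's actual proof of this theorem invokes the Wu formula directly, this is immaterial.
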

\begin{proof}
Except for the condition $i|k$ this follows already from Lemma \ref{oldgoingdown} and the well known Wu-formula (note that for complex vector bundles, the Stiefel-Whitney class $w_{2k}$  is the mod 2 restriction of $c_k$, and all Stiefel-Whitney classes $w_i$ for odd $i$ vanish).\\

To see one can arrange also $i|k$ one can use the same idea as in the proof of Theorem \ref{zpperiod}.
\end{proof}

\begin{Rem}
Using the Wu formula, one may prove the preceding theorem using wu classes of a real vector bundle instead of chern classes of a complex vector bundle, and arrive at the same conclusion. However, as we will need later that the elements induced by our $\Zp$-periodicity theorems naturally lift to integral cohomology, we choose to prove it for mod 2 chern classes instead.
\end{Rem}

Using the tools now at our disposal, we are ready to prove the main theorem:\\

\begin{proof}[Proof of Theorem \ref{mainper}]\,\\

We begin with the complex case:

If $\Nu(N)$ carries a complex structure, $e(\Nu)=c_\frac{k}{2}$ is a Chern class, inducing periodicity with $4\frac{k}{2}\le n$ by assumption.

Therefore, the $\Ztwo$ and $\Z_3$-periodicity Theorems are satisfied and we find two Chern classes $c_{2^a}$ and $c_{\lambda 3^b}$ whose reductions induce periodicity in mod 2 and mod 3 cohomology, respectively, for some $a,b\ge 0, \lambda\le 2$.

As we have already seen, the images of these Chern classes in rational cohomology induce periodicity by Lemma \ref{ptoq}, so we obtain in total $\gcd(2^{a+1}, 2\lambda 3^b, k)$-periodicity, which is either $2$- or $4$-periodicity.\\

In the noncomplex case, we use the relation $e(\Nu)^2=c_k(\Nu \tensor_{\R}\C)$ in $\Hz{N}$ that holds for any real vector bundle.

From here we continue as in the complex case, noting that we have $e(\Nu)$ inducing background periodicity such that we may use Lemma \ref{pgoingdown} instead of Lemma \ref{goingdown}. Note that Lemma \ref{goingdown} does not apply a priori if $|x|=2k$.
\end{proof}

\newpage

\section{A Conjecture of Hopf with Symmetry}\label{B}

In this section, we will use the results of Section \ref{A} to prove the following theorem, which we will then use to obtain the Hopf conjecture with $T^4$-symmetry as corollary:

\begin{customtheorem}{\ref{mainfix}}
Let $(M^n, g)$ be a closed, orientable Riemannian manifold of positive sectional curvature equipped with an effective isometric $T^4$-action, and $F^f$ be a component of the fixed point set $Fix(T^4)$. 

Then $H^*(F, \Q)$ is, as a ring, that of $\s{f}, \CP{f}$, or $\HP{f}$.
\end{customtheorem}

The main tool for relating positive curvature to topological restrictions is the famous Connectedness Lemma of Wilking, 

\begin{theorem*}[Connectedness Lemma]\cite{w03}
Let $M^n$ be a compact Riemannian manifold with positive sectional curvature.
\begin{enumerate}
\item Suppose that $N^{n-k}\subset M^n$ is a compact totally geodesic embedded submanifold of codimension $k$. Then the inclusion map is $(n-2k+1)$-connected.
\item Suppose that $N_1^{n-k_1}$, $N_2^{n-k_2}\subset M^n$ are two compact totally geodesic embedded submanifolds, $k_1\le k_2, k_1+k_2\le n$. Then the intersection $N_1\cap N_2$ is a totally geodesic embedded submanifold as well, and the inclusion $N_1^{n-k_1}\cap N_2^{n-k_2} \into N_2^{n-k_2}$ is $(n-k_1-k_2)$-connected.
\end{enumerate}
\end{theorem*}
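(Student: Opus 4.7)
The plan is to prove both parts by applying Morse theory to the energy functional on an appropriate space of paths, with positive sectional curvature providing lower bounds on the Morse indices of the geodesic critical points. The general strategy originates in Frankel's intersection theorem, and the connectedness improvement here is the geometric heart of Wilking's argument.

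For part (1), I would first translate the connectedness claim into a problem about path spaces. Identifying $\pi_*(M, N)$ with the shifted homotopy groups of the homotopy fibre of the inclusion, it suffices to show that the path space $P_{N,q}$---consisting of piecewise smooth paths $c : [0,1] \to M$ with $c(0) \in N$, $c(1) = q$, and $\dot c(0) \perp T_{c(0)}N$---is $(n-2k)$-connected for generic $q \in M \setminus N$. Morse theory on the energy functional realises $P_{N,q}$ as weakly equivalent to a CW complex with one cell per critical point of dimension equal to the Morse index. The critical points are precisely the geodesics from $N$ to $q$ meeting $N$ perpendicularly; for generic $q$ the unique shortest such geodesic contributes a single $0$-cell, so the task reduces to showing that every other critical geodesic $\gamma$ has Morse index at least $n - 2k + 1$.

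The index estimate is where positive curvature is used crucially. Given a non-minimal critical geodesic $c_1 = \gamma$ and the minimising geodesic $c_0$ (both running from $N$ to $q$ perpendicular at the $N$-end), one constructs variation vector fields along $c_1$ by parallel-transporting tangent vectors at $c_0(0) \in N$ first along $c_0$ to $q$ and then back along $c_1$. Each such variation satisfies the required boundary conditions, and a second-variation calculation exploiting positive curvature together with the length comparison $L(c_1) \geq L(c_0)$ shows the index form is strictly negative on the resulting subspace. A careful dimension count---combining the $(n-k)$-dimensional contribution from parallel transport of $T_{c_0(0)} N$ with an additional direction coming from $\dot c_0(0)$, and deducting overlaps with the boundary constraints on variations at $c_1(0)$---produces the desired $(n - 2k + 1)$-dimensional subspace of negative directions.

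For part (2), I would first observe that $N_1 \cap N_2$ is automatically totally geodesic at any intersection point $p$, since a vector in $T_p N_1 \cap T_p N_2$ exponentiates into both submanifolds and hence into their intersection. Nonemptiness when $k_1 + k_2 \leq n$ is a Frankel-type result: a shortest segment between $N_1$ and $N_2$ would meet both endpoints perpendicularly, and parallel-transporting a basis of $T_pN_1$ along it into $T_qN_2$ combined with positive sectional curvature would yield a negative-index variation contradicting length minimality, forcing the distance between the submanifolds to be zero. The $(n - k_1 - k_2)$-connectedness claim then follows by running essentially the same Morse-theoretic machinery on the path space of curves from $N_1$ to a fixed point of $N_2$ meeting $N_1$ perpendicularly at the start, with the codimension bookkeeping now involving both $k_1$ and $k_2$ symmetrically.

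The main obstacle is the index estimate at the end of part (1): a naive parallel-field second-variation calculation produces only $n - 2k$ independent negative directions for the index form, and obtaining the improvement to $n - 2k + 1$---which is precisely what makes the lemma strong enough to drive the dimension-counting arguments underlying Theorems \ref{mainfix} and \ref{maincomb}---requires the delicate combined use of the minimising and non-minimising geodesics sketched above, together with careful handling of the perpendicularity boundary conditions at the $N$-end. This subtle $+1$ is exactly what distinguishes Wilking's connectedness lemma from the earlier Frankel-type intersection theorems.
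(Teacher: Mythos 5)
This statement is quoted from Wilking's paper \cite{w03}; the present paper gives no proof of it, so there is nothing internal to compare against, and your sketch has to be judged against Wilking's actual argument. Your general framework (Morse theory for the energy functional on a path space, with positive curvature forcing large indices at non-minimal critical points) is the right one, but the central index estimate as you describe it does not go through. In the path space $P_{N,q}$ of paths from $N$ to a \emph{fixed} point $q$, an admissible variation field along a critical geodesic $c_1$ must vanish at $t=1$. The fields you construct -- parallel transport of $v\in T_{c_0(0)}N$ along $c_0$ to $q$ and then back along $c_1$ -- are parallel along $c_1$ and hence nowhere zero, so they do \emph{not} satisfy the required boundary conditions. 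Multiplying by a cutoff $f$ with $f(1)=0$ reintroduces a positive term $\int (f')^2|E|^2$ into the index form, and with only $\sec>0$ (no positive lower curvature bound and no control on $L(c_1)$ from below) there is no way to make the result negative; the length comparison $L(c_1)\ge L(c_0)$ you invoke cannot rescue this. (A separate, smaller slip: you build the perpendicularity condition $\dot c(0)\perp T_{c(0)}N$ into the definition of $P_{N,q}$ itself, which destroys the identification of $P_{N,q}$ with the homotopy fibre of the inclusion; perpendicularity is a property of the critical points, not of the ambient path space.)

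The way Wilking avoids all of this is to work with the \emph{free} path space $P(M;N,N)$ of paths with both endpoints on $N$. There the admissibility condition is only $V(0)\in T_{c(0)}N$, $V(1)\in T_{c(1)}N$, so genuine parallel fields are allowed: for a non-constant critical geodesic $c$ (orthogonal to $N$ at both ends, since $N$ is totally geodesic the second fundamental form terms vanish), every $v\in T_{c(0)}N$ with $P_c(v)\in T_{c(1)}N$ gives a strictly negative direction. The ``$+1$'' you attribute to a delicate two-geodesic comparison is in fact elementary linear algebra: both $P_c\bigl(T_{c(0)}N\bigr)$ and $T_{c(1)}N$ are $(n-k)$-dimensional subspaces of the hyperplane $\dot c(1)^{\perp}$, so their intersection has dimension at least $(n-k)+(n-k)-(n-1)=n-2k+1$. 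No second geodesic and no genericity of $q$ enter at this stage. One then recovers $\pi_j(M,N)=0$ for $j\le n-2k+1$ from the evaluation fibration $P(M;N,N)\to N$ with its constant-path section, whose fibre is the homotopy fibre of $N\hookrightarrow M$. Part (2) is handled the same way with $P(M;N_1,N_2)$, whose minimum set is the constant paths on $N_1\cap N_2$; fixing an endpoint on $N_2$, as you propose, runs into the same admissibility problem.
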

Note here that, if in addition the intersection of $N_1^{n-k_1}$ and $N_2^{n-k_2}$ is transversal, $n-k_1-k_2$ is the dimension of $N_1\cap N_2$, and we are in the situation of periodicity.

For the proof of Theorem \ref{mainfix}, we use the methods of \cite{kww}. Our aim is to reduce to the following, which follows from the aforementioned work and Theorem \ref{mainper}:

\begin{Lemma}\label{stdorsxcp}
Let $T^2 \acton N$ be an effective isometric action splitting at $p\in F^f$ as a product of two circle actions with weights $e_{1,2}$, with $sec(N)>0$, such that the inclusion $\fk{e_1}\subset N$ is maximally connected with $\dim \fk e_1 \ge \frac{\dim N}{2}$.

Then $H^*(F, \Q)$ is, as a ring, that of $\s{f}, \CP{f}$, or $\HP{f}$, or there is a fixed point component $F'$ of $T^2$ of rational cohomology type $S^h\times \cp^{k\ge 1}$ for $h\in\{2,3\}$.
\end{Lemma}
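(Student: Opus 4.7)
The plan is to combine the improved periodicity theorem \ref{mainper} with the Connectedness Lemma applied along the stratification of $N$ by fixed sets of the two circle factors. Since $\fk{e_1}$ is a fixed point component of the circle $\ker e_1 \subset T^2$, its normal bundle in $N$ carries a canonical complex structure coming from the induced $S^1$-action on the fibers. The hypotheses that $\fk{e_1} \subset N$ is maximally connected with $\dim \fk{e_1} \ge \dim(N)/2$ are exactly what the complex case of Theorem \ref{mainper} requires, so I would conclude that $\Hq{N}$ is $4$-periodic, with the periodicity generator a rational multiple of the Euler class of the normal bundle. By maximal connectedness, this periodicity descends in the relevant range of degrees to $\fk{e_1}$ itself.

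Next, I would pass from $\fk{e_1}$ to $F$ along the residual $S^1$-action $T^2/\ker e_1 \curvearrowright \fk{e_1}$, which has $F$ as the fixed point component containing $p$. Applying the Connectedness Lemma in its two-submanifold form to the pair $\fk{e_1}, \fk{e_2} \subset N$ shows that the inclusion $F \hookrightarrow \fk{e_1}$ is maximally connected, and the normal bundle of $F$ in $\fk{e_1}$ is complex for the same isotropy reason as before. When the codimension of $F$ in $\fk{e_1}$ is at most $\dim \fk{e_1}/2$, Theorem \ref{mainper} applies a second time to yield $4$-periodicity of $\Hq{F}$. As is standard for an orientable manifold with $4$-periodic rational cohomology in this setting, $F$ must then have the rational cohomology ring of $\s{f}$, $\CP{f}$, or $\HP{f}$, giving the first alternative of the lemma.

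The main obstacle, and the origin of the second alternative, is the case where the second application of Theorem \ref{mainper} fails because $\codim_{\fk{e_1}} F > \dim \fk{e_1}/2$, or the degrees of the available periodicity generators are incompatible with $\dim F$. In this regime I would analyze the $T^2$-representation at $p$ and, using positive curvature on $\fk{e_1}$ together with the $4$-periodicity already established there, locate an alternative fixed point component $F'$ of $T^2$ whose normal weight pattern, combined with the known periodicity generator on $\fk{e_1}$, forces $F'$ to have rational cohomology ring $\s{h} \times \cp^k$ with $h \in \{2,3\}$ and some $k \ge 1$. The delicate point is ensuring that such an $F'$ can always be exhibited precisely when the direct classification of $F$ fails; I expect this to proceed along the lines of the corresponding argument in the $T^3$-case of \cite{kww}, with the third circle used there now replaced by the complex periodicity on $\fk{e_1}$ provided by Theorem \ref{mainper}.
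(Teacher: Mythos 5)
There is a genuine gap, and it sits exactly where the lemma's content lies. Your first two paragraphs are broadly consistent with how the paper handles the easy cases (the paper disposes of $f\le 3$ trivially and of $f=4,\ \mu(e_1)=1$ by combining the Connectedness Lemma with the $4$-periodicity of $\fk e_1$ and $N$ from Theorem \ref{mainper}). But your key inference --- that once $\Hq{F}$ is $4$-periodic, $F$ ``must then have the rational cohomology ring of $\s{f}$, $\CP{f}$, or $\HP{f}$'' --- is false. The rings $H^*(S^2\times \hp^k,\Q)$ and $H^*(S^3\times\hp^k,\Q)$ are themselves $4$-periodic in the sense of Definition \ref{perdef}, and these are precisely the exceptional types responsible for the second alternative of the lemma. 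Periodicity alone cannot separate the standard models from $S^h\times\hp^k$; that is the whole reason the lemma has an ``or'' in its conclusion.

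Consequently your third paragraph, which should carry the real weight, is only a declaration of intent (``I would analyze\dots I expect this to proceed along the lines of\dots''), not an argument. The paper's route in the remaining case $f+2\mu(e_1)\ge 7$ is: first invoke \cite[Thm 3.1]{kww} together with Poincar\'e duality to pin down $N$ as standard or $S^h\times\hp^k$ with $h\in\{2,3\}$; then observe that a fixed point component of a circle action on an $S^h\times\hp^k$ can only be standard, $S^h\times\hp$, or $S^h\times\cp$ --- the last giving $F'=F$ immediately; and in the troublesome case where $F$ is of type $S^h\times\hp$, deduce the same for $\fk e_2$ and appeal to \cite[Thm 4.1]{kww}, excluding its option (b) by the Grove--Searle classification of fixed-point-homogeneous positively curved manifolds, so that option (a) produces the claimed $F'$ of type $S^h\times\cp$. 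None of these steps (the dimension threshold $f+2\mu(e_1)\ge 7$, the classification of circle fixed point components inside $S^h\times\hp^k$, or the Grove--Searle exclusion) appears in your proposal, so the second alternative is not established.
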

\begin{proof}Denote by $\mu(e_i)$ the multiplicity of $e_i$ as weights of the isotropy action at $p$.

We say that $F$ is standard if it has the rational cohomology ring of a sphere or complex or quaternionic projective space.

If $f\le 3$, $F$ is automatically standard.\\
If $f=4, \mu(e_1)=1$, the inclusion $F\into \fk{e_1}$ is $6-2\cdot 2+1=3$-connected by the Connectedness Lemma. Since $\fk{e_1}$ and $N$ are 4-periodic by Theorem \ref{mainper}, $b_2(F)=b_2(\fk e_1)\le 1$ and $F$ is standard by Poincar\'e-duality.\\

In the remaining case, $f+2\mu(e_1)\ge 7$.

By \cite[Thm 3.1]{kww} and Poincar\'{e}-duality, this implies $N$ is standard or $S^h \times \hp^k$ for $h\in\{2,3\}$.

If $N$ is standard, so is $F$. If $N$ is $S^h \times \hp^k$, $F$ can be standard, $S^h \times \hp$ or $S^h \times \cp$. In the last case, our claim is satisfied with $F'=F$. If now $F$ is of type $S^h \times \hp$, the same follows for $\fk e_2$: $N$ has this type, so $\fk e_2$ can also only be standard, $S^h \times \hp$ or $S^h \times \cp$. The only one of these that can have $S^h \times \hp$ as fixed point component is $S^h \times \hp$.

This places us in the situation of \cite[Thm 4.1]{kww}. As in \cite[Proof of Thm A]{kww}, option (b) contradicts the classification of fixed point homogeneous positively curved manifolds by Grove and Searle \cite{grovesearle}, and can never occur. We are therefore in option (a) and find another fixed point component of type $S^h \times \cp$ as claimed. Note that the case $S^1\times \cp$ can never occur as positively curved manifolds have $b_1(M, \Q)=0$.
\end{proof}

\begin{Lemma}\label{torusreduction}
Let $T^4 \acton M$ be an effective torus action on a closed orientable manifold of positive sectional curvature, $F$ a fixed point component of this action and $p\in F$. \\

Then there is a codimension 1 subgroup $H\subset T$ such that the action $T^3=T^4/H \acton \Fix(H)$ satisfies
\begin{itemize}
\item $\Fix(T^3\acton \Fix(H)) = \Fix(T^4 \acton M)$.
\item $\Fix(H)$ is orientable.
\item $T^3$ has no finite isotropy groups near $p$.
\item the isotropy representation at $p$ splits as $S^1\times T^2$ such that the weight corresponding to the $S^1$-factor has minimal multiplicity among all occuring weights.
\end{itemize}
\end{Lemma}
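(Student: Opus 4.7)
The plan is to construct $H$ directly from the isotropy representation of $T = T^4$ at $p\in F$. Decompose the tangent representation
$T_pM = T_pF \oplus \bigoplus_{i=1}^{r} V_{\alpha_i}$,
where the $\alpha_i \in \Hom(T,S^1) \cong \Z^4$ are pairwise non-parallel and $V_{\alpha_i}$ has complex dimension $\mu_i$. Select $\alpha_j$ of minimal multiplicity $\mu_j$ (assuming it to be primitive, a point I return to at the end). Picking a $\Z$-basis of the character lattice dual to $\alpha_j$, I may assume $\alpha_j = e_1^*$.

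Next, I choose a primitive cocharacter $h = h_2 e_2 + h_3 e_3 + h_4 e_4 \in \Z^4$ with $h_1 = 0$, subject to the genericity condition $\alpha_i(h) \neq 0$ for every weight $\alpha_i \notin \Z\alpha_j$. Each such condition defines a proper linear hyperplane in the copy of $\R^3$ parametrizing $(h_2, h_3, h_4)$ (nonempty since $\alpha_i \notin \Z e_1^*$), and finitely many proper hyperplanes do not contain every primitive integer point, so a valid $h$ exists. Take $H$ to be the subcircle of $T$ generated by $h$; by construction $H \subset \ker \alpha_j$ and is 1-dimensional.

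Verification of the four conditions is then straightforward. Orientability of $\Fix(H)$ follows because $H$ is connected (so preserves orientation) and the normal bundle of each fixed component splits into complex line bundles under the circle action, hence is orientable. The equality $\Fix(T^3 \acton \Fix(H)) = F$ is automatic because $T^3 = T/H$ acts on $\Fix(H)$ through the residual $T$-action and $F \subset \Fix(H)$. Since $h_1 = 0$, the product $T = T_1 \times T_{234}$ with $H \subset T_{234}$ yields $T^3 = T_1 \times (T_{234}/H) = S^1 \times T^2$. The surviving weights (those $\alpha$ with $\alpha(h) = 0$) are, by the genericity of $h$, either scalar multiples of $\alpha_j$ (purely $T_1$, hence $S^1$) or have vanishing first coordinate (purely $T^2$), so the $T^3$-representation at $p$ splits compatibly with $S^1 \times T^2$. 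The $S^1$-weight is $\alpha_j$ of multiplicity $\mu_j$, which remains minimal, since every surviving weight has $T^4$-multiplicity $\geq \mu_j$ by construction.

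The main obstacle is the \emph{no finite isotropy} condition, which demands that every surviving weight be primitive in the $T^3$-character sublattice $\{\alpha \in \Z^4 : \alpha(h) = 0\}$. Primitivity in $\Z^4$ implies primitivity in any sublattice containing the vector, so it suffices that all surviving $T^4$-weights be primitive in $\Z^4$. The genericity of $h$ rules out non-primitive weights with $\alpha_1 = 0$; however, non-primitive weights proportional to $\alpha_j$ (i.e., of the form $k\alpha_j$ with $|k|\geq 2$) necessarily survive whenever $\alpha_j$ does. Thus the initial choice of $\alpha_j$ has to be refined to rule out such multiples, which is where the positive-curvature hypothesis and the structure of isometric torus actions genuinely enter the argument and where the careful bookkeeping lies.
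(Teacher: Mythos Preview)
Your construction has a fatal flaw: the generic circle $H$ you choose kills \emph{every} weight except $\alpha_j$. Indeed, the tangent space of $\Fix(H)$ at $p$ is $T_pF \oplus \bigoplus_{\alpha_i(h)=0} V_{\alpha_i}$, and your genericity condition $\alpha_i(h)\neq 0$ for all $\alpha_i\notin\Z\alpha_j$ forces the only surviving weight to be $\alpha_j$ itself. Consequently the $T^3=T^4/H$-action on $\Fix(H)$ has a two-dimensional ineffective kernel near $p$; the ``$T^2$-factor'' in your splitting acts trivially, so the isotropy representation does \emph{not} split as $S^1\times T^2$ in any useful sense. Your sentence ``or have vanishing first coordinate (purely $T^2$)'' is simply wrong: a weight with $\alpha_1=0$ still satisfies $\alpha(h)\neq 0$ by your own genericity assumption, so it does not survive to $\Fix(H)$. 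The downstream application (Lemma~\ref{stdorsxcp} and the proof of Theorem~\ref{mainfix}) needs the $T^2$-factor to carry at least two linearly independent weights, so your output is unusable.

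The paper's approach is essentially the opposite of generic. First it removes finite isotropy by passing to $\tilde T^4=T^4/I$ on $\Fix(I)$ for a maximal finite isotropy group $I$; orientability of $\Fix(I)$ is the one place positive curvature enters, via \cite[Thm 1.8]{kww}. This step also makes your closing worry about primitivity disappear. Second, once there is no finite isotropy, the weight set is constrained to one of 17 configurations (Table~\ref{tab:t4list}), and for each one the paper exhibits a \emph{specific} rank-3 sublattice whose intersection with the weight set has the form $\{e_1\}\sqcup\Omega_2$ with $\langle\Omega_2\rangle$ two-dimensional and $e_1$ of minimal multiplicity. The point is that such a sublattice is far from generic---for the two exceptional configurations ($Std_2\oplus Std_2$ and $Std_4$) it has to be found by hand. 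Your attempt bypasses both steps and, as a result, produces neither the correct isotropy structure nor a resolution of the finite-isotropy obstruction you yourself flag at the end.
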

\begin{proof}
We construct the claimed action in two steps. First, if $T^4\acton M$ has a finite isotropy group near $p$, we choose a maximal finite isotropy group $I$ near $p$ and consider the action $\tilde{T}^4:=T^4/I \acton \Fix(I)$. Since $I$ was chosen maximally, this action will have no finite isotropy groups near $p$. The orientability of $\Fix(I)$ was proven in \cite[Thm 1.8]{kww} and is the only point in this Lemma where positive curvature enters. 

It turns out that having no finite isotropy groups near $p$ places a strong restriction on the isotropy representation $\tilde{T}^4 \acton \Nu(F\subset \Fix(I))$. 

First, since the action is assumed to be effective, the set $\Omega \subset \Z^4$ of weights of non-trivial irreducible subrepresentations at $x$ contains a basis of $\Q^4$.

Then the condition on the nonexistence of finite isotropy groups becomes this: For any lattice spanned by elements of $\Omega$, this lattice can not have index greater than $1$ in $\Z^4$. It turns out that there are, up to change of basis, only 17 possible weight sets satisfying these conditions, which are listed in Table \ref{tab:t4list}. We procceed to find our $T^3$ in three cases:

First, if there is a $S^1\subset\tilde{T}^4$ such that the action of $\tilde{T}^4/S^1$ splits at $p$ as $S^1\times S^1 \times S^1$, we are done. In this case we let the $S^1$ whose representation has minimal multiplicity be the $S^1$ of the claim and group the others into a $T^2$. We say the $\tilde{T}^4$-representation is of `splitting $T^3$' type.

By consultation of Table \ref{tab:t4list}, one now sees that of the 17 weight sets, 15 are of splitting $T^3$ type, and are left with the other two possibilities:

(1) The weight set is given by $Std_2 \oplus Std_2 := \{e_1, e_2, e_1-e_2\} \oplus \{e_3, e_4, e_3-e_4\}= \{e_i, e_1-e_2, e_3-e_4 | i\le 4\}$:

This case is not difficult. After change of basis we may assume $e_1$ to have minimal multiplicity (note that our representations are real representations, so we are working up to sign). Then for $S^1=\ker(\langle e_1, e_3, e_4\rangle)$ the weight set of $T^4/S^1$ at $p$ consists of the old weight set intersected with $\langle e_1, e_3, e_4\rangle$, which is $\{e_1, e_3, e_4, e_3-e_4\}= \{e_1\}\oplus \{e_3, e_4, e_3-e_4\}$, splitting as claimed, and the multiplicity of $e_1$ is minimal by construction.\\

(2) The weight set is given by $Std_4 := \{e_i, e_j-e_k | i\le 4,  j < k\le 4\}$:

After change of basis, we may assume the weight with minimal multiplicity is $e_1$ or $e_1-e_2$. Now we note $Std_4\cap \langle \{e_1, e_i-e_j| 2\le i<j\}\rangle = \{e_1\}\oplus \{e_i-e_j|2\le i<j\}$ as well as $Std_4\cap \langle \{e_1-e_2, e_i| i\ge 3\}\rangle = \{e_1-e_2\}\oplus \{e_3, e_4, e_3-e_4\}$. 
Thus we see, in either case, that we obtain a quotient torus $T^3=\tilde{T}^4/S^1 = T^4/(S^1\cdot I)$ with the required properties. Note here that orientability is automatic when moving to $S^1$-fixed point components, and the condition $\Fix(T^3\acton \Fix(H)) = \Fix(T^4/H \acton \Fix(H)) = \Fix(T^4 \acton M)$ is trivial.
\end{proof}

\begin{table}[!h]
  \begin{center}
    \caption{List of connected isotropy $T^4$ weight sets}
    \label{tab:t4list}
    \begin{tabular}{r|c|l} 
      \textbf{Size} & \textbf{weights not including standard basis} & \textbf{Splitting $T^3$?}\\
      \hline
      4 & $\emptyset$ & \checkmark\\
      5 & $\begin{pmatrix}1\\ 0\\ 0\\ -1\end{pmatrix},\begin{pmatrix}1\\ 0\\ -1\\ -1\end{pmatrix},\begin{pmatrix}1\\ -1\\ -1\\ -1\end{pmatrix}$ & \checkmark\\
      6 & $\begin{pmatrix}1 & 1\\ 0 & 0\\ 0 & -1 \\ -1 & 0\end{pmatrix},\begin{pmatrix}1 & 1\\ 0 & -1\\ 0 & -1 \\ -1 & 0\end{pmatrix},\begin{pmatrix}1 & 1\\ 0 & -1\\ -1 & 0 \\ -1 & -1\end{pmatrix},\begin{pmatrix}1 & 0\\ -1 & 0\\ 0 & 1 \\ 0 & -1\end{pmatrix}$ & all except last\\
      7 & $\begin{pmatrix}1 & 1 & 1\\ 0 & 0 & 0\\ 0 & -1 & -1 \\ -1 & 0 & -1\end{pmatrix}, \begin{pmatrix}1 & 1 & 1\\ 0 & 0 & -1\\ 0 & -1 & 0 \\ -1 & 0 & 0\end{pmatrix}, \begin{pmatrix}1 & 1 & 1\\ 0 & 0 & -1\\ 0 & -1 & 0 \\ -1 & 0 & -1\end{pmatrix}, \begin{pmatrix}1 & 1 & 1\\ 0 & 0 & -1\\ 0 & -1 & -1 \\ -1 & 0 & -1\end{pmatrix}$ & \checkmark\\
      8 & $\begin{pmatrix}1 & 1 & 1 & 1\\ 0 & 0 & 0 & -1\\ 0 & -1 & -1 & 0 \\ -1 & 0 & -1 & 0\end{pmatrix} , \begin{pmatrix}1 & 1 & 1 & 1\\ 0 & 0 & -1 & -1\\ 0 & -1 & 0 & -1 \\ -1 & 0 & -1 & 0\end{pmatrix}$ & \checkmark\\
      9 & $\begin{pmatrix} 1 & 1 & 1 & 1 & 1\\ 0 & 0 & 0 & -1 & -1\\ 0 & -1 & -1 & 0 & 0 \\ -1 & 0 & -1 & 0 & -1\end{pmatrix}, \begin{pmatrix} 1 & 1 & 1 & 1 & 1\\ 0 & 0 & -1 & -1 & -1\\ 0 & -1 & 0 & -1 & -1 \\ -1 & 0 & -1 & 0 & -1\end{pmatrix}$ & \checkmark\\
      10 & $\begin{pmatrix} 0 & 0 & 0 & 1 & 1 & 1\\ 0 & 1 & 1 & 0 & 0 & -1\\ 1 & 0 & -1 & 0 & -1 & 0 \\ -1 & -1 & 0 & -1 & 0 & 0\end{pmatrix}$ & $\times$
    \end{tabular}
  \end{center}
\end{table}
\newpage

\begin{proof}[Proof of Theorem \ref{mainfix}]\,\\
We start by applying Lemma \ref{torusreduction} to our $T^4$-action in order to obtain a $T^3$-action that splits near $p\in F^f$ as $S^1\times T^2$ in a way that the weight $e_1$ corresponding to the $S^1$-factor has minimal multiplicity among all weights. Let $\Omega$ be the weight set of $T^3$ at $p$. 

Let $A, B\subset \Omega$ be subsets. For the two submanifolds $\fk(\langle A \rangle)$ and $\fk(\langle B \rangle)$ to intersect transversally, one can check on the level of weights that $\Omega \cap (\langle A \rangle \cup \langle B \rangle) = \Omega \cap \langle A \cup B \rangle$ and $\Omega \cap (\langle A \rangle \cap \langle B \rangle) = \Omega \cap \langle A \cap B \rangle$, as this is equivalent to the condition that the codimension of $\fk(\langle A \cap B \rangle)$ in $\fk(\langle A \cup B \rangle)$ is the sum of the codimensions of $\fk(\langle A \rangle)$ and $\fk(\langle B \rangle)$.\\

Then, to see whether the action satisfies Lemma \ref{torusreduction}, one checks two things:

First, one checks that the remaining action of $T^3/\ker(\langle A \rangle\cup \langle B\rangle)$ on $\fk(\langle A \rangle\cup \langle B\rangle)$ is effective. This is automatic since the action of $T^3$ on $M$ was  effective to start with.

Then one has to check the condition on the dimensions, which can be determined from
\begin{align*}
\dim\fk(\langle A \rangle\cap \langle B\rangle) = f + \sum\limits_{\omega \in \langle A \rangle\cap \langle B\rangle \cap \Omega} 2\mu(\omega)\\
\dim\fk(\langle A \rangle) = f + \sum\limits_{\omega \in \langle A \rangle \cap \Omega} 2\mu(\omega)\\
\dim\fk(\langle B\rangle) = f + \sum\limits_{\omega \in \langle B\rangle \cap \Omega} 2\mu(\omega)
\end{align*}
 
To return to the proof, pick now some weight $e_2$ in the wieght set $\Omega_2\subset \Omega$ corresponding to the $T^2$-factor.

We consider $A=\{e_1, e_2\}, B=\Omega_2$. This intersection is transversal, so the inclusion of $\fk(A\cap B) = \fk(e_2)$ into $\fk(\langle e_1, e_2 \rangle)$ is maximally connected by the Connectedness Lemma. Note here that $\dim\fk(\langle e_1, e_2\rangle)\le \dim\fk \Omega_2$ since $e_1$ has minimal multiplicity and $\Omega_2$ contains at least one weight different from $e_2$.

Then we obtain $\dim \fk(\langle e_1, e_2\rangle) = f+ 2\mu(e_1)+2\mu(e_2) \le 2(f+\mu(e_2))=2\dim \fk(e_2)$. 
  
 Thus we have seen that we can find a codimension two subgroup such that the remaining $T^2$-action on the fixed point component containing $F$ satisfies the assumptions of Lemma \ref{stdorsxcp}. Thus either we are done or we find another fixed point component $F'$ of type $S^h \times \cp^{l\ge 1}$, $h\in\{2,3\}$. Like above, we find a codimension 2 subgroup such that the remaining $T^2$ action on the fixed point component $N'$ over $F'$ is as above. Since $N'$ is $4$-periodic by Theorem \ref{mainper}, as in Lemma \ref{stdorsxcp} we find that $N'$ must be of rational type $S^h \times \hp$. Using that the $T^2$-action we constructed has at $F'$ only two linearly independent weights, the fixed point components $\fk e_1$ and $\fk e_2$ intersect transversally, so $F'\into \fk e_i$ is maximally connected for some $i\in\{1,2\}$ by the Connectedness Lemma. This, however, gives us a contradiction. Since $\fk e_i$ is a fixed point component of a circle action on $N'$, the possible types are standard, $S^h \times \hp$ and $S^h \times \cp$. None of these contains a maximally connected $S^h \times \cp$ of strictly smaller dimension.
\end{proof}

Using this, we get progress on the Hopf conjecture:

\begin{Cor}[Hopf conjecture with $T^4$-symmetry]
Let $(M^n, g)$ be an even-dimensional closed Riemannian manifold with positive sectional curvature and an effective isometric $T^4$-action. Then $\chi(M)>0$.
\end{Cor}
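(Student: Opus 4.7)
The plan is to reduce the corollary to Theorem \ref{mainfix} via two standard ingredients: Berger's theorem for the non-emptiness of fixed point sets, and the Lefschetz-type identity $\chi(M) = \chi(\Fix(T^4))$ for torus actions on orientable manifolds.

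First I would pass to the oriented double cover if needed: the lifted $T^4$-action (after possibly replacing the torus by an isomorphic finite cover) acts isometrically on a positively curved orientable manifold with twice the Euler characteristic, so it suffices to treat the orientable case. Second, I would show $\Fix(T^4)\ne \emptyset$ by iterating Berger's theorem. Any isometric $S^1$-action on a closed even-dimensional positively curved manifold has a fixed point, and every component $N$ of $\Fix(S^1)$ is a totally geodesic, hence positively curved, submanifold. Because the normal representation of $S^1$ at a fixed point contains no trivial summand and every nontrivial real irreducible representation of a torus is $2$-dimensional, $N$ is even-dimensional. The residual $T^3$-action on $N$ is isometric and we can iterate Berger's theorem on a chosen component four times to produce a point in $\Fix(T^4)$.

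The same weight computation shows every component $F^f$ of $\Fix(T^4)$ is even-dimensional: the normal representation of $T^4$ at a fixed point of $F$ has no trivial summand, so its real rank is even, forcing $f$ to be even. By Theorem \ref{mainfix}, $H^*(F,\Q)$ is the rational cohomology ring of $\s{f}$, $\CP{f}$, or $\HP{f}$. Since $f$ is even, all three possibilities have strictly positive Euler characteristic, namely $\chi(\s{f})=2$ (with the convention $\chi(\text{pt})=1$ when $f=0$), $\chi(\CP{f})=f/2+1$, and $\chi(\HP{f})=f/4+1$. Hence $\chi(\Fix(T^4))$ is a sum of strictly positive terms.

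Finally, the Lefschetz fixed point formula for torus actions on closed orientable manifolds (a standard consequence of the Borel localization formula, cf.\ Conner--Floyd) gives $\chi(M) = \chi(\Fix(T^4))$, so $\chi(M)>0$ as claimed. The entire difficulty of the corollary is absorbed into Theorem \ref{mainfix}; the only thing to check here is that the chain of tools (oriented cover, iterated Berger, evenness of components, Lefschetz) can all be applied, which follows from the elementary weight considerations above. I do not expect any serious obstacle in this assembly.
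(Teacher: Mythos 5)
Your proposal is correct and follows essentially the same route as the paper: pass to the orientation double cover, use Berger's theorem for non-emptiness of $\Fix(T^4)$, apply Theorem \ref{mainfix} to see each component has positive Euler characteristic, and conclude via $\chi(M)=\chi(\Fix(T^4))$. The extra details you supply (iterating Berger on totally geodesic components, evenness of the fixed point components) are correct and merely make explicit what the paper leaves implicit.
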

\begin{proof}
We always have $\chi(M)=\chi(Fix(T^4))=\sum\limits_{F \subset \Fix(T^4)}\chi(F)$.

By a result of Berger, $Fix(T^4)$ is nonempty. If $M$ is orientable, by Theorem \ref{mainfix}, each component contributes at least $+1$ to the sum. If $M$ is not orientable, we pass to the orientation cover. By the above argument it has positive Euler characteristic, and by multiplicativity of the Euler characteristic it is twice the Euler characteristic of $M$.
\end{proof}

\section{Application}\label{C}

The goal of this section is to give an application of Theorem \ref{mainfix}, and prove\\

\begin{customtheorem}{\ref{maincomb}}
Let $M^n$ be an even-dimensional closed oriented manifold with $b_{odd}(M)=0$, equipped with a $T^d$-action, $d\ge 5$, such that all fixed point components of codimension-2 subtori are of rational cohomology $\s{}, \cp$, or $\hp$ type.

Then $M$ has the rational cohomology of $\s{n}, \CP{n}$, or $\HP{n}$.
\end{customtheorem}\,\\

Note here that the condition on the odd Betti numbers may be replaced by assuming the action to be equivariantly formal. Both conditions are conjecturally void in the case of positive sectional curvature. By a conjecture of Bott (cf \cite{gh}), positively (or, more generally, non-negatively) curved manifolds are rationally elliptic. Since by Corollary \ref{hopf} the Euler characteristic of $M$ is positive under the conditions of Theorem \ref{maincomb}, the Bott-Grove-Halperin conjecture would already imply vanishing of the odd rational Betti numbers under these conditions by work of Halperin \cite{halp}. \\

We prove Theorem \ref{maincomb} by giving an improvement of \cite[Thm C]{kww}. The main difference in our proof is the use of theorems of real projective combinatorics. \\

\begin{Rem}
 Unlike \cite[Thm C]{kww}, this theorem does not require assumptions on any fixed point components of codimension 3 subtori. In exchange, we can no longer handle the case of $T^4$-actions any more, as the methods we use in the proof of Lemma \ref{atmostfour} do not give rigidity; this also means we no longer encounter the Cayley projective plane $\cayley$ as a cohomology model for $M$. See however Remark \ref{t4remark} for further discussion of the case $d=4$.\\
\end{Rem}
 
 The starting point for proving this theorem is a construction of \cite{kww}, inspired by  the study of GKM actions:\\
 
\begin{Def} \cite[Def 5.1, adapted for notation]{kww}\label{graphdef}\\
\begin{enumerate}
\item If $N^m$ has the rational cohomology of an even-dimensional rank one symmetric
space, we define $\mu(N) \in \{1, 2, 4, m/2\}$ by the property that the cohomology
of $N$ is generated in degree $2\mu(N)$.
\item Suppose an action of $T^d$ on a closed, orientable manifold has the property that any codimension one torus has only fixed-point components with the rational
cohomology of $\s{}$, $\cp$, or $\hp$ type. Consider a connected component $C$ of the one-skeleton $\{p\in M | \dim(Tp)\le 1\}$. We let $F_1, . . . , F_{k_0}$ denote the different components of $F\cap C$.

We define a graph with vertices $\{F_i\}$ as follows: For each codimension one torus $R\subset T^d$, we choose a weight $r\in \mathfrak{t}^{*}$ whose kernel is given by the Lie subalgebra of $R$. We put $\mu(N)$ edges between $F_i$ and $F_j$ with label $r$ if $F_i$ and $F_j$ are contained in the same fixed-point component $N$ of $R$.

If the fixed-point component at $F_i$ has positive dimension, we put $\mu(F_i)$
edges from $F_i$ to itself with label $0$. Moreover, if $F_i$ has positive
dimension and $N \supset F_i$ is the fixed-point component of a codimension one torus $R$ we put $(\mu(N)-\mu(F_i))$ edges with label $r$ from $F_i$ to itself.\end{enumerate}
\end{Def} 
 
\begin{Rem}
From the definition it is clear that the weights that are used as labels here are defined uniquely only up to scaling, i.e. the subtori $R$ define naturally a labelling of the edges with labels in $\mathbb{P}(\mathfrak{t}^*)$.
\end{Rem} 
 
Kennard, Wiemeler and Wilking then prove the following:

\begin{Lemma}[{\cite[Lemmas 5.2-5.5]{kww}}]\label{graphlemma}\,\\
Suppose an action of $T^d$ on a closed manifold $M^n$ has the property that any codimension two torus only has fixed point components of rational type $\s{}$, $\cp$, and $\hp$. Choose $C$ and $F_1, ..., F_{k_0}$ and the corresponding graph, as in Definition \ref{graphdef}.
\begin{enumerate}
\item There exists $m\ge 1$ such that there are precisely $m$ edges between any two vertices, and from $F_i$ to itself whenever $\dim(F_i)\ne 0$. 
\item For every subspace $V$ of $\mathfrak{t}^{*}$, every connected component of the 'reduced' graph containing only the edges with label in $V$ satisfies $(1)$. If $\dim(V)\le 2$, then it satisfies $(1)$ with $m\le2$.
\item $\chi(C)=\sum\chi(F_i)=\frac{n}{2m}+1$.
\item If some fixed point component $F_i$ has $\dim(F_i)\ge 4$, then $m\in\{1,2,\frac{n}{2}\}$.
\item If $b_{odd}(M, \Q)=0$, $d\ge 4$, and $m\in \{1,2,\frac{n}{2}\}$, then $M$ has rational type $\cp, \hp, \s{}$ respectively.
\end{enumerate}
\end{Lemma}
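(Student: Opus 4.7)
My plan is to treat the four claims as different facets of a single GKM-style combinatorial description of the one-skeleton, organized around the local isotropy data at a vertex $F_i$, its propagation along the graph via codimension-two subtori, and a final Euler-characteristic audit. Parts $(1)$ and $(2)$ encode rigidity of the local model, $(3)$ is an accounting identity, and $(4)$--$(5)$ promote the combinatorial picture to a global cohomological statement.

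For $(1)$, I would start at a vertex $F_i$ and analyze the isotropy representation of $T^d$ on the normal bundle $\Nu(F_i\subset M)$: its weights, viewed up to sign in $\mathfrak{t}^*$, split into lines, and each such line cuts out a codimension-one subtorus $R$ whose fixed component $N\supset F_i$ has rational type $\s{}$, $\cp$, or $\hp$ by hypothesis, with $\mu(N)\in\{1,2,4,\tfrac{1}{2}\dim N\}$ equal to the number of weights on that line. To see that all the $\mu(N)$ agree at $F_i$, I pick two such lines yielding components $N_1,N_2$ and look at the codimension-two subtorus $R_1\cap R_2$: its fixed component $N_{12}$ is again a rational $\s{}$, $\cp$, or $\hp$, and because $F_i$ sits in its fixed set under the residual $T^2$-action, a weight count in $N_{12}$ forces $\mu(N_1)=\mu(N_2)$. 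Propagating along edges of $C$ then yields a single global $m$. For $(2)$, the same argument restricted to weights in $V$ produces a constant on every connected component of the reduced graph; the bound $m\leq 2$ when $\dim V\leq 2$ follows because $\mu=4$ requires three weights on a line, so two such $N$'s meeting at a vertex would fill a $3$-dimensional subspace of $V$.

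Claim $(3)$ follows from the Lefschetz-style identity $\chi(M)=\chi(\Fix(T^d))$ localized to $C$, giving $\chi(C)=\sum_i\chi(F_i)$, and the $m$-regularity from $(1)$ collapses the dimension count to $n/(2m)+1$. For $(4)$, if some $\dim F_i\geq 4$, then either $\mu(F_i)=\tfrac{1}{2}\dim F_i$, which forces the graph to collapse to a single vertex (so $m=n/2$), or Poincar\'e duality on $F_i$ together with the loops at $F_i$ rule out $\mu(N)\notin\{1,2\}$ for adjacent codim-$1$ components, giving $m\in\{1,2\}$.

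The main obstacle is $(5)$, which requires upgrading the combinatorics to a ring isomorphism. Equivariant formality, which follows from $b_{\rm odd}(M;\Q)=0$, makes $H^*_{T^d}(M;\Q)$ a free $H^*(BT^d;\Q)$-module of rank $\chi(C)$, and a GKM reconstruction expresses this module in terms of the labelled graph. For $m=n/2$ the graph has a single vertex, immediately giving the rational sphere model; for $m\in\{1,2\}$ I would combine the four-periodicity of Theorem \ref{mainper}, applied to maximally connected codim-$1$ inclusions arising from fixed sets of circle subgroups, with the standard rigidity of $\cp$- and $\hp$-type cohomology rings under such periodicities to pin down $H^*(M;\Q)$. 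The technical hurdle is managing the lift from the equivariant picture to ordinary rational cohomology without introducing spurious relations.
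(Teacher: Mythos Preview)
First, note that the paper does not prove this lemma: it is quoted from \cite{kww} (Lemmas~5.2--5.5 there) and used as a black box. So there is no in-paper argument to compare against; your sketch is an attempt to reconstruct what \cite{kww} does.

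Your outline for $(1)$--$(4)$ is in the right spirit --- the local isotropy analysis at a vertex, propagation via codimension-two subtori, and Euler-characteristic bookkeeping are indeed the ingredients in \cite{kww}. The justification you give for $m\le 2$ in $(2)$ is garbled, though: ``$\mu=4$ requires three weights on a line'' is not what $\mu(N)=4$ means (it means four edges with a \emph{single} label $r$). The actual reason is that when $\dim V\le 2$ the reduced component is the graph of a residual $T^2$-action on a fixed-point component of a codimension-two subtorus, which by hypothesis is a rational $\s{}$, $\cp$, or $\hp$, and one verifies $m\le 2$ model by model.

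The real gap is in $(5)$. You propose to invoke Theorem~\ref{mainper} on ``maximally connected codim-$1$ inclusions arising from fixed sets of circle subgroups'', but nothing in the hypotheses of Lemma~\ref{graphlemma} furnishes maximally connected inclusions: there is no curvature assumption here, so Wilking's Connectedness Lemma is unavailable, and fixed-point inclusions of circle actions are not maximally connected in general. The proof in \cite{kww} does not touch periodicity; it is carried out entirely in equivariant cohomology. Equivariant formality (from $b_{odd}=0$) and the Chang--Skjelbred lemma identify $H^*_T(M;\Q)$ with the image of the restriction to the one-skeleton, and that image is computed explicitly from the labelled graph. The graph data then determines $H^*_T(M;\Q)$ as an $H^*(BT;\Q)$-algebra, from which $H^*(M;\Q)$ is read off. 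Your opening sentence on equivariant formality is the correct entry point; the wrong turn is reaching for Theorem~\ref{mainper} rather than completing the GKM computation. (A minor slip: for $m=n/2$ one has $\chi(C)=2$ by $(3)$, so the graph has two isolated vertices or one $S^2$-vertex, not a single vertex.)
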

 
 Furthermore, due to a theorem of Chang and Skjelbred \cite{cs73}, the one-skeleton is connected under the assumptions of Theorem \ref{maincomb}.
 
By a combination of $(4)$ and $(5)$, Theorem \ref{maincomb} is clear unless all fixed point components are of dimension zero or two.

 In the following, we use the word `triangle' to mean subgraphs obtained by choosing three vertices and all edges between them. We use interchangeably the word edge for the single edges of the graph as defined above and for the collections of edges between $F_i$ and $F_j$, which we denote $(F_i, F_j)$. By the weight set of some $(F_i, F_j)$ we mean the set $A_{ij}$ of weights of the labels, counted with multiplicity.
 
 For a weight $a\in A_{ij}$, we say the multiplicity $\mu(a)$ is the number of weights in $A_{ij}$ linearly dependent with $a$. \\
  
 We proceed towards proving Theorem \ref{maincomb} by showing a sequence of Lemmas:\\ 
 
As before, we say a manifold is standard if it has rational cohomology type  $\s{}$, $\cp$, or $\hp$

\begin{Lemma}\label{spheregraph}
In the situation of Lemma \ref{graphlemma}, unless $M$ is a rational sphere, we are in one of the following two situations:
\begin{itemize}
\item all fixed points are isolated and there are at least 3 fixed point components
\item there is a fixed point component of type $S^2$ and there are at least two fixed point components
\end{itemize}
\end{Lemma}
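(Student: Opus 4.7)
My plan is to combine the Euler characteristic identity
\[ \chi(C) = \sum_i \chi(F_i) = \tfrac{n}{2m}+1 \]
from Lemma \ref{graphlemma}(3) with the rigidity statements Lemma \ref{graphlemma}(4) and (5), casing on the dimensions of the $F_i$. First I would record the standing observation that, since $M$ is not a rational sphere and $d \geq 5 \geq 4$, Lemma \ref{graphlemma}(5) excludes $m = n/2$; this is the fact I shall use repeatedly to close the easy cases.

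If every $F_i$ is isolated, then each contributes $\chi(F_i) = 1$, so $k_0 = \tfrac{n}{2m}+1$. The possibility $k_0 = 1$ forces $n=0$, and $k_0 = 2$ forces $m = n/2$; both are excluded, giving $k_0 \geq 3$ and the first alternative. If instead some $F_j$ has the rational cohomology of $S^2$, the same Euler characteristic argument rules out $k_0 = 1$ (namely $\chi(F_1)=2$ would again give $m = n/2$), so $k_0 \geq 2$ and the second alternative holds.

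The main obstacle is the remaining configuration, in which some $F_i$ has positive dimension yet no $F_j$ is an $S^2$. Since each $F_i$ is a rational standard space of even dimension, the positive-dimensional ones then have dimension at least $4$, so Lemma \ref{graphlemma}(4) forces $m \in \{1,2,n/2\}$; combined with $m \neq n/2$ this leaves $m \in \{1,2\}$, and Lemma \ref{graphlemma}(5) identifies $M$ rationally with $\cp$ or $\hp$. To rule out this configuration, my plan is to apply Lemma \ref{graphlemma}(2) to a one-dimensional subspace $V \subset \mathfrak{t}^*$ spanned by an isotropy weight occurring at one of the high-dimensional $F_i$: the resulting connected reduced subgraph containing $F_i$ satisfies (1) with common multiplicity at most $2$, and I expect that combining an Euler characteristic count on the reduced subgraph with the assumption that no $S^2$ component appears will either force the unwanted appearance of an $S^2$ vertex or yield a direct numerical contradiction. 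This final step, squeezing a contradiction out of the reduced-graph data under the delicate hypothesis that positive-dimensional vertices have dimension at least four, is the one I expect to be most technical.
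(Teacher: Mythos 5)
Your handling of the two cases that actually arise is correct and is exactly the paper's argument: if all components are isolated, the excluded configurations $k_0\in\{1,2\}$ force $n=0$ or $m=n/2$, and if some component is a rational $S^2$, the excluded configuration $k_0=1$ again forces $m=n/2$; in each case Lemma \ref{graphlemma}(5) makes $M$ a rational sphere, contrary to hypothesis.

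The problem is your third case, and it cannot be closed along the lines you propose, because the configuration you are trying to rule out genuinely occurs. Take the linear $T^5$-action on $\cp^{7}$ induced by the weights $0,0,0,e_1,\dots,e_5$ on $\C^{8}$: the action is effective, every subtorus has only linear subspaces $\cp^{k}$ as fixed point components, $b_{odd}(M)=0$, and $M$ is not a rational sphere, yet $\Fix(T^5)$ consists of a $\cp^{2}$ together with five isolated points -- so neither alternative of the lemma holds. Consequently no Euler characteristic count on reduced subgraphs will yield a contradiction there. The resolution is that the lemma is meant to be read under the standing reduction recorded in the paragraph immediately preceding it: if some fixed point component has dimension at least $4$, then Lemma \ref{graphlemma}(4) and (5) already give Theorem \ref{maincomb} (as you in fact observe when you deduce $m\in\{1,2\}$ and $M\simeq_{\Q}\cp$ or $\hp$), so the subsequent lemmas are only invoked when every fixed point component has dimension zero or two. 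Under that standing assumption your third case is vacuous and your first two cases already constitute a complete proof; the correct move at the end is therefore not to seek a contradiction but to note that the case has been disposed of before the lemma is ever applied.
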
 

\begin{proof}
There are only two cases to consider that are not allowed by the conclusion:
 
 First, if the graph has two vertices which are isolated fixed points. Then, by (3), $m=\frac{n}{2}$ and $M$ is a sphere.
 
 If there is only one vertex, it must be of type $S^2$, since for any codimension 2 subtorus we have $\chi(F)=\chi(Fix(T^2))\ge 2$ by assumption. Again, by (3), we have $m=\frac{n}{2}$ and $M$ is a sphere.
\end{proof}

\begin{Lemma}\label{s2comb} Assume we are in the situation of Lemma \ref{graphlemma}. Let $F_i$ be a fixed point component of type $S^2$ and $F_j$ another fixed point component. Denote by  $\Omega$ the set of weights between $F_i$ and $F_j$ and call the set of nonzero weights from $F_i$ to itself $N$. Denote by $\tilde{\Omega}, \tilde{N}$ the images in the projectivization $\mathbb{P}(\mathfrak{t}^*)$. Then the following hold:
\begin{itemize}
\item On every line through two points of $\tilde{\Omega}$ lies a point of $\tilde{N}$.
\item On every line through a point of $\tilde{\Omega}$ and a point of $\tilde{N}$ lies another point of $\tilde{\Omega}$.
\end{itemize}
\end{Lemma}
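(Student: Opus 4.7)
The plan is to reduce both claims to an analysis of the reduced graph for the $2$-dimensional subspace $V \subset \mathfrak{t}^*$ spanned by the relevant weights (i.e.\ the one with $L = \mathbb{P}(V)$). By Lemma \ref{graphlemma}(2), the connected component of this reduced graph containing $F_i$ admits some common number $m' \leq 2$ of edges between any two vertices and of self-loops at each positive-dimensional vertex, by part (1) applied to the reduced graph.

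For the first claim I would start from two distinct points $[r_1], [r_2] \in \tilde{\Omega} \cap L$; these contribute at least two edges between $F_i$ and $F_j$ in the reduced graph, so $m' = 2$. By Lemma \ref{graphlemma}(1) applied to the reduced graph, there are then also exactly two self-loops at $F_i$. Only one of them can carry the label $0$ (since $\mu(F_i) = 1$ because $F_i$ has rational type $S^2$), so the other has a nonzero label $v \in V$, which is by definition a point of $\tilde{N} \cap L$.

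For the second claim, assume distinct $[r] \in \tilde{\Omega}$ and $[s] \in \tilde{N}$ on $L$. A parallel count of self-loops at $F_i$ (one labeled $0$, and at least $\mu(N_s) - \mu(F_i) = \mu(N_s) - 1 \geq 1$ labeled $s$, using that $\mu(N_s) \in \{2, 4, \dim(N_s)/2\}$ whenever $s \in N$) together with the single edge labeled $r$ forces $m' = 2$. Hence there are exactly two edges between $F_i$ and $F_j$ in the reduced graph, and I want to conclude that these two edges carry distinct projective labels, which immediately yields the desired second point of $\tilde{\Omega} \cap L$.

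The only obstruction is the degenerate possibility that both of these edges come from the same codimension-one subtorus $R_r$ with $\mu(N_r) = 2$, i.e.\ $N_r$ of $\hp$-type. This I expect to be the main subtle point, and I would rule it out by counting self-loops again: such an $N_r$ would contribute an additional $\mu(N_r) - \mu(F_i) = 1$ self-loop at $F_i$ labeled $r$, on top of the loop labeled $0$ and the $\mu(N_s)-1$ loops labeled $s$, for a total of at least $1 + 1 + (\mu(N_s) - 1) = 1 + \mu(N_s) \geq 3$ self-loops, contradicting $m' = 2$. Consequently the two edges between $F_i$ and $F_j$ in the reduced graph have distinct projective classes, and the lemma follows.
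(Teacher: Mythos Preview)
Your proof is correct and follows the same reduction as the paper: restrict to the $2$-dimensional subspace $V$ spanned by the two relevant weights and use Lemma~\ref{graphlemma}(2) to get $m'\le 2$, then count edges and self-loops. Claim~(1) is handled identically.

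The only genuine difference is in how the degenerate case in claim~(2) is excluded. The paper proves upfront that $\mu(\omega)=1$ for every $\omega\in\Omega$ by a separate Euler-characteristic argument: if some $F^\omega_i$ were of $\hp$-type, then (since $|N|\ge 1$) it would sit inside a strictly larger codimension-$2$ fixed-point component with the same Euler characteristic, for which no standard model exists. You instead rule out $\mu(N_r)=2$ locally inside claim~(2) via the self-loop count $1+(\mu(N_s)-1)+(\mu(N_r)-1)\ge 3>m'=2$. Your route is a bit more economical for the lemma at hand; the paper's argument yields the stronger global statement $\mu(\omega)=1$ for all $\omega\in\Omega$ (equivalently $\tilde\Omega\cap\tilde N=\emptyset$) as a byproduct.
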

\begin{proof}
We start by showing $\mu(\omega)=1$ for all $\omega\in \Omega$. If this is not the case, by definition of the graph the fixed point component $F^{\omega}_i$ of the codimension one subtorus $\ker \omega$, acting on $M$, containing $F_i$ is of type $\hp$. Since $|N|=m-1\ge \mu(\omega)-1\ge 1$, $F^{\omega}_i$ has an edge leading to itself in the graph of that action. But this is impossible, since the fixed point component $F^{\langle \omega, n \rangle}_i$ of the codimension $2$ torus determined by such an edge with weight $n$ must have the same Euler characteristic as $F^{\omega}_i$ and higher dimension. As $F^{\omega}_i$ was already of type $\hp$, there is no model for $F^{\langle \omega, n \rangle}_i$ with this property.\\

We may now prove the main claim: If we are in situation (1), let $\omega_1, \omega_2$ be distinct elements of $\Omega$ and consider the graph obtained by intersection with $\langle \omega_1, \omega_2 \rangle$, which is the graph of the $T^2$ action on the fixed point set of the codimension two subtorus $\ker(\langle \omega_1, \omega_2 \rangle)$. This reduced graph now has $m_{red}\le 2$ by Lemma \ref{graphlemma} (2), so it has $m_{red}=2$ since we have two weights by construction. We have then $|N \cap \langle \omega_1, \omega_2 \rangle|=m_{red}-1>0$ and find a weight $n\in N$ as claimed.

If we are in situation (2), let $\omega$, $n$ be elements of $\Omega, N$ respectively. Passing to the action obtained by restriction with $\langle \omega, n\rangle$, we obtain again $m_{red}=2$. Using that $\mu(\omega)=1$, we find another weight $\omega_2\in \Omega\cap\langle \omega, n\rangle$ as claimed.
\end{proof}

\begin{Lemma}\label{omegaspans}
Assume we are in the situation of Lemma \ref{graphlemma} and $M$ is not a rational sphere. 

Then, for every fixed point component $F_i$, the set of weights on edges leading from $F_i$ to other components spans the whole weight space $\mathfrak{t}^*$
\end{Lemma}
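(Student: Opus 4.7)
The plan is to argue by contradiction. Suppose the weights on edges from some $F_i$ to other components span only a proper subspace $V \subsetneq \mathfrak{t}^*$. The first step is to propagate this restriction to the entire graph: by Lemma \ref{graphlemma}(1) there are exactly $m$ edges between $F_i$ and any other vertex $F_k$, and by assumption all their labels lie in $V$. Hence every vertex sits in the same connected component $\Gamma'$ of the reduced graph $\Gamma_V$ as $F_i$, and applying Lemma \ref{graphlemma}(2) to $\Gamma'$ produces reduced multiplicity $m' = m$ (as this equality already holds between $F_i$ and each other vertex). Consequently, every edge of $\Gamma$ between distinct vertices carries a label in $V$.

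Next, I would invoke Lemma \ref{spheregraph} and handle two cases. In case (i), every fixed component is isolated, so $\chi(F_i) = 1$. For any codimension-one subtorus $R = \ker r$ with $N(r) \supsetneq F_i$, the standard manifold $N(r)$ has $\chi(N(r)) \ge 2$, so by Lefschetz applied to the $S^1 = T^d/R$-action, $F_i$ cannot be the sole fixed component of $N(r)$. Thus every isotropy weight at $F_i$ arises as an edge weight from $F_i$, and by effectiveness they must span $\mathfrak{t}^*$, contradicting $V \subsetneq \mathfrak{t}^*$. In case (ii), there is a rational $S^2$ component $F_{i_0}$ with some neighbor $F_j$, and I would show that every isotropy weight at $F_{i_0}$ lies in $V$. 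If $\mu(N(r)) = 1$, then $N(r) = \cp^{n_r}$ with $n_r \ge 2$ (as $N(r) \supsetneq F_{i_0}$), and the same Lefschetz argument—using $\chi(\cp^{n_r}) = n_r + 1 \ge 3 > 2 = \chi(F_{i_0})$—forces another vertex into $N(r)$, making $r$ an edge weight in $V$ by the first step. If instead $\mu(N(r)) \ge 2$, then $r$ is a nonzero loop weight at $F_{i_0}$, and the second bullet of Lemma \ref{s2comb} applied to $F_{i_0}$ and $F_j$ places $r$ on a projective line through two weights of $\Omega_{i_0,j} \subseteq V$, hence $r \in V$. Either way, the isotropy weights at $F_{i_0}$ would lie entirely in $V \subsetneq \mathfrak{t}^*$, contradicting effectiveness of the $T^d$-action.

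The main obstacle will be the propagation step, where one has to carefully combine the uniform edge-count of Lemma \ref{graphlemma}(1) with the reduced-graph statement (2) to pass from a restriction on edges emanating from a single vertex to a restriction on all edges of $\Gamma$. Once that is in hand, the case analysis runs smoothly: the partition of isotropy weights at the chosen $S^2$-vertex by $\mu(N(r))$ ensures that each falls into one of the two tractable regimes—Lefschetz/Euler-characteristic on one side, the projective-linearity condition of Lemma \ref{s2comb} on the other.
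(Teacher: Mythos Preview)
Your argument is correct, but the paper's proof is organised more directly: it stays at the given vertex $F_i$ throughout and splits only on whether $F_i$ itself is an isolated point or of type $S^2$ (recall that in the ambient context all fixed-point components have dimension at most $2$, so these are the only two possibilities). Your propagation step---forcing every edge label of the graph into $V$ via the uniform edge count of Lemma~\ref{graphlemma}(1) combined with (2)---is valid, and it is exactly what lets you jump from $F_i$ to a possibly different $S^2$-component $F_{i_0}$ supplied by Lemma~\ref{spheregraph}; but once one notes that $F_i$ is already isolated or of type $S^2$, no propagation is needed and one can argue locally. Within each branch the substance is identical to yours: a Lefschetz/Euler-characteristic count shows that every isotropy weight at an isolated vertex (or, at an $S^2$-vertex, every weight with $\mu(N(r))=1$) must label an edge to another component, while the second bullet of Lemma~\ref{s2comb} places each nonzero self-loop weight in the span of two edge weights. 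So your detour costs an extra paragraph of global propagation but requires no new input.
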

\begin{proof}
Since the action is effective, the weights of the isotropy representation at any $p\in F_i$ span the whole weight space (see \cite[Remarks before Thm 5]{skjel78}).
If $F_i$ is an isolated fixed point, every such weight appears at an edge to another component and we are done.

If $F_i$ is of type $S^2$, we split the weights of the isotropy representation into two sets $\Omega$, $N$, where $\Omega$ contains those weights that are on edges to other fixed point components, and $N$ are the weights on edges from $F_i$ to itself, except zero. By the above, we have $\mathfrak{t}^*=\langle \Omega \cup N \rangle$. In order to show $\mathfrak{t}^*=\langle \Omega\rangle$, we prove $N\subset \langle \Omega\rangle$. 

By Lemma \ref{spheregraph}, there is another vertex $F_j$ in the graph. Pick a weight $\omega$ on the edge from $F_i$ to $F_j$. If $N$ is empty, we have nothing to show. Pick $n\in N$. By Lemma \ref{s2comb} there is a weight $\omega_2 \ne \omega$ between $F_i$ and $F_j$ such that $\omega_2 \subset \langle \omega, n\rangle$. But then $n\in \langle \omega, \omega_2 \rangle \subset \langle \Omega \rangle$.  
\end{proof}

\begin{Cor}\label{s2cor} Assume we are in the situation of Lemma \ref{graphlemma} and $M$ is not a rational sphere, and that there is a fixed point component $F_i$ of type $S^2$. Then
\begin{itemize}
\item For every other component $F_j$, the weights $\Omega_{ij}$ between $F_i$ and $F_j$ satisfy $\dim\langle \Omega_{ij}\rangle \le 3$.
\item If $d\ge 4$, not all weight sets $\Omega_{ij}$ are identical.
\end{itemize}
\end{Cor}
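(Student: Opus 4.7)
My approach relies on two basic observations about the projective weight sets $\tilde\Omega_{ij}, \tilde N \subset \mathbb{P}(\mathfrak{t}^*)$, both of which I would establish first. On the one hand, $\tilde\Omega_{ij}\cap\tilde N=\emptyset$: any $\omega\in\Omega_{ij}$ has $\mu(N_\omega)=1$ by the proof of Lemma~\ref{s2comb}, while any $\omega\in N$ requires $\mu(N_\omega)-\mu(F_i)\ge 1$, so $\mu(N_\omega)\ge 2$ since $\mu(F_i)=1$. On the other hand, no three weights of $\tilde\Omega_{ij}$ are projectively collinear: applying Lemma~\ref{graphlemma}(2) with $V = \langle\omega,\omega'\rangle$ gives $m_{\mathrm{red}}\le 2$, and since $\mu(\omega)=1$ for each $\omega\in\Omega_{ij}$, one has $|\Omega_{ij}\cap V|\le 2$.

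For the first claim, I would suppose for contradiction that $\omega_1,\omega_2,\omega_3,\omega_4\in\Omega_{ij}$ are linearly independent. Lemma~\ref{s2comb} yields $n\in\tilde N$ on the projective line $\overline{\omega_1\omega_2}$, with $n\notin\tilde\Omega_{ij}$ by disjointness, and then a further $\omega'\in\tilde\Omega_{ij}\cap\langle\omega_1,\omega_2,\omega_3\rangle\setminus\{\omega_3\}$ on $\overline{n\omega_3}$. A case check rules out $\omega'\in\{\omega_1,\omega_2\}$ (otherwise the distinct lines $\overline{\omega_1\omega_2}$ and $\overline{\omega'\omega_3}$ would share both $n$ and $\omega'$, forcing $n\in\tilde\Omega_{ij}$) and $\omega'=\omega_4$ (by linear independence), so $\omega'$ is a fifth weight in the $3$-dimensional subspace $\langle\omega_1,\omega_2,\omega_3\rangle$. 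Iterating this construction equips the finite set $P:=\tilde\Omega_{ij}\cup(\tilde N\cap\mathbb{P}\langle\Omega_{ij}\rangle)$ with the closure property that every projective line meeting $\tilde\Omega_{ij}$ in at least one point contains a third point of $P$; combined with the no-three-collinear constraint on $\tilde\Omega_{ij}$, a Sylvester--Gallai-type theorem for real projective configurations with this mixed closure data then forces $P$ into a projective plane, yielding $\dim\langle\Omega_{ij}\rangle\le 3$.

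For the second claim, suppose for contradiction that all $\Omega_{ij}$ coincide with a common set $\Omega$. Lemma~\ref{omegaspans} gives $\langle\Omega\rangle=\mathfrak{t}^*$, whence $|\Omega|\ge d\ge 4$. For any two weights $\omega_1,\omega_2\in\Omega$, the codim-$2$ fixed component $N^{12}$ of $\ker\langle\omega_1,\omega_2\rangle$ contains every $F_k$ (since $\omega_1,\omega_2\in\Omega_{ik}$ for each $k$), is standard by hypothesis, and its reduced $T^2$-graph has $m_{\mathrm{red}}=2$. The sphere case is ruled out by Lemma~\ref{spheregraph}, and an analysis of how $F_i$ and $F_j$ could sit inside $N^{12}$ via the two codim-$2$ $\cp$-type fixed components $N_{\omega_1},N_{\omega_2}$ rules out $N^{12}\simeq\cp^k$, so $N^{12}$ is of type $\hp^l$. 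Since $F_i=S^2=\cp^1$ then appears as a $T^2$-fixed component of an $\hp^l$ for \emph{every} pair $\omega_1,\omega_2\in\Omega$, choosing three linearly independent weights $\omega_1,\omega_2,\omega_3\in\Omega$ and analyzing the three pairwise $\hp$-structures simultaneously inside the common codim-$3$ fixed component $N^{123}$ overdetermines the isotropy weights at $p\in F_i$ and yields the contradiction. I expect the main obstacle in both parts to be the ``real projective combinatorics'' input advertised in the introduction: for Claim~1, formulating and applying the precise Sylvester--Gallai-type theorem adapted to the mixed closure data $(\tilde\Omega_{ij},\tilde N)$, and for Claim~2, turning the rigidity of $T^d$-actions on $\hp^l$ with a $\cp^1$-fixed component into a usable numerical obstruction.
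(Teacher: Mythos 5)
Your proposal correctly identifies the combinatorial configuration at the heart of part (1) --- the two closure properties of $(\tilde\Omega_{ij},\tilde N)$ from Lemma \ref{s2comb}, plus the observations that $\tilde\Omega_{ij}\cap\tilde N=\emptyset$ and that no line meets $\tilde\Omega_{ij}$ in three points --- but it stops exactly where the real work begins. The step ``a Sylvester--Gallai-type theorem for this mixed closure data forces $P$ into a projective plane'' is not a known off-the-shelf statement you can wave at: classical Sylvester--Gallai only says that a non-collinear finite set spans an ordinary line, and it gives no dimension bound by itself. What is actually needed is Sten Hansen's higher-dimensional generalization (every spanning finite set determines a hyperplane all but one of whose points lie in a codimension-two subspace), applied to the set $\Omega$ together with the ``escape'' set $N$; this is precisely Skjelbred's Theorem 8, which the paper cites here and reproves in strengthened form in the disjoint case of Lemma \ref{atmostfour}. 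Since you yourself flag ``formulating and applying the precise theorem'' as the main obstacle, part (1) of your argument has a genuine gap at its only nontrivial step.

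Part (2) is where you go astray more seriously. You invoke Lemma \ref{omegaspans} to conclude $\langle\Omega\rangle=\mathfrak{t}^*$, hence $\dim\langle\Omega\rangle=d\ge 4$ --- and this already contradicts the bound $\dim\langle\Omega_{ij}\rangle\le 3$ you just proved in part (1), since by assumption $\Omega$ \emph{is} one of the sets $\Omega_{ij}$. That two-line argument is the paper's entire proof of (2). Instead you launch into an analysis of codimension-two fixed components $N^{12}$ of type $\hp$ and a claimed ``overdetermination'' of isotropy weights inside a codimension-three fixed component; that final contradiction is only asserted, not derived, and the whole detour is unnecessary. I would encourage you to always check whether the first bullet of a corollary already kills the hypothesis of the second before building new machinery.
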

\begin{proof}
(1) is a straightforward application of a theorem of Sten Hansen to the combinatorial contraint given by Lemma \ref{s2comb}. Two sets like this can only exist if the projective space has dimension at most 2; this is Theorem 8 in \cite{skjel78}. If one is interested in the details of the proof, we will actually prove a strengthened version of this in the proof of Lemma \ref{atmostfour}.

(2) is obtained from (1) and Lemma \ref{omegaspans}.
\end{proof}

\begin{Lemma}\label{lindep}
If $T^d$ acts effectively on a rational $\s{}, \cp$, or $\hp$, then for every triangle with weight sets $A_{12},A_{13},A_{23}$ the polynomials $\Pi_{a\in A_{ij}}a \in H^*(BT)$ are linearly dependent. 
\end{Lemma}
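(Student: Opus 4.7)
The plan is to pass to equivariant cohomology and read off the linear dependence from a telescoping identity attached to the three vertices of the triangle. If $N$ is a rational even-dimensional sphere, $\chi(N)=2$ leaves no room for three vertices of positive Euler characteristic, so the graph contains no triangle and the statement is vacuous. In the remaining cases, $N$ is a rational $\cp^n$ or $\hp^n$, and $H^*(N;\Q)$ is generated by a single class in degree $2\mu$ with $\mu\in\{1,2\}$. Since the odd rational cohomology of $N$ vanishes, the $T^d$-action is equivariantly formal, so the restriction $H^*_T(N;\Q)\to\bigoplus_i H^*_T(F_i;\Q)$ is injective and the Chang--Skjelbred description of $H^*_T(N;\Q)$ in terms of the one-skeleton applies.

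Fix an equivariant lift $\tilde x\in H^{2\mu}_T(N;\Q)$ of the generator of $H^{2\mu}(N;\Q)$, and for each vertex $F_i$ let $\lambda_i\in H^{2\mu}(BT;\Q)$ denote the projection of $\iota_{F_i}^*\tilde x$ onto the summand $H^{2\mu}(BT)\otimes H^0(F_i)$. The central claim is the divisibility
\[
\prod_{a\in A_{ij}} a \;\Big|\; \lambda_i-\lambda_j \quad\text{in } H^*(BT;\Q).
\]
I would establish this edge by edge: for each weight $w$ labelling an edge from $F_i$ to $F_j$, restricting $\tilde x$ to the corresponding one-dimensional stratum $N_w\subset N$ (itself rationally a $\cp^1$ or $\hp^1$) and using the ring structure of $H^*_T(N_w;\Q)$ yields $w^{\mu(N_w)}\mid\lambda_i-\lambda_j$; the product over the distinct edges between $F_i$ and $F_j$ then gives the full divisibility. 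A direct inspection of the isotropy representation at each $F_i$ shows that $|A_{ij}|=\mu$ in both the $\cp^n$ and $\hp^n$ cases, so both sides of the divisibility share the common degree $2\mu$, and the divisibility upgrades to $\lambda_i-\lambda_j=c_{ij}\prod_{a\in A_{ij}}a$ for some $c_{ij}\in\Q$.

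The linear dependence of the three products is then immediate from the telescoping identity $(\lambda_1-\lambda_2)+(\lambda_2-\lambda_3)+(\lambda_3-\lambda_1)=0$, which becomes
\[
c_{12}\prod_{a\in A_{12}}a + c_{23}\prod_{a\in A_{23}}a - c_{13}\prod_{a\in A_{13}}a = 0.
\]
Non-triviality of this relation reduces to the distinctness of the three $\lambda_i$'s, which I would deduce from a Vandermonde-style argument: were they to collapse, the restriction map could not attain the $H^*(BT;\Q)$-rank $\chi(N)$ forced by equivariant formality. The main difficulty I foresee is making the Chang--Skjelbred step precise when the $F_i$'s are not isolated, since then the projection defining $\lambda_i$ must be reconciled with the fibre cohomology of $F_i$ so that the degree-matching argument still yields the scalar proportionality.
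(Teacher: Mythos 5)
Your proposal is correct and follows essentially the same route as the paper: lift the degree-$2\mu$ generator to equivariant cohomology, restrict to the fixed components to get classes $\lambda_i\in H^{2\mu}(BT)$ with $\lambda_i-\lambda_j$ proportional to $\prod_{a\in A_{ij}}a$, and conclude by the telescoping identity. The only difference is that the paper outsources the divisibility/nonvanishing step to \cite[Lemma 4.2]{kww} (and disposes of the $\cp$ case by citing that each weight set is a single weight), whereas you sketch a direct Chang--Skjelbred, edge-by-edge derivation of it.
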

\begin{proof}
If the model is a sphere, there are no triangles and there is nothing to show. 

If the model is a complex projective space, each weight set consists only of one weight by \cite[Lemma 5.2a]{kww}.

If the model $M$ is a quaternionic projective space, choose points $p_i\in F_i$ and an element $x_T$ in $H^4_T(M)$ lifting a generator of $H^4(M, \Q)$. Let $x_i$ denote the restriction of $x_T$ to $H^4_T(p_i) = H^4(BT)$. Then we have $x_i-x_j=\lambda_{ij}\Pi_{a\in A_{ij}}a$ for some nonzero $\lambda_{ij}$ (cf \cite[Lemma 4.2]{kww}). Summing we get the desired linear dependency.
\end{proof}

\begin{Lemma}\label{wiemeler}
There is no $T^2$-action on a rational $\hp^{k\ge 2}$ with isolated fixed points such that all rational weight sets consist of the same two linearly independent elements (up to multiplicity).
\end{Lemma}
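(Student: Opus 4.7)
The plan is to combine the GKM graph of Definition \ref{graphdef} with the equivariant cohomology of the action and an elementary combinatorial argument. Since $\hp^k$ has vanishing odd rational cohomology, the $T^2$-action is equivariantly formal, so by Chang-Skjelbred the graph on the $k+1$ isolated fixed points $p_0,\dots,p_k$ is connected with a common edge multiplicity $m$; combined with $\chi(C)=k+1$ and $n=4k$, Lemma \ref{graphlemma}(3) forces $m=2$, so every $1$-skeleton component $N$ joining two fixed points has $\mu(N)=2$ and is therefore a rational $\hp^\ell$ or $S^4$. Realize the equivariant cohomology as $H^*_{T^2}(\hp^k;\Q) \cong \Q[\alpha,\beta,U]/\prod_{i=0}^{k}(U-u_i)$, where $u_i := u_T|_{p_i} \in H^4(BT^2;\Q)$ are pairwise distinct restrictions of an equivariant lift $u_T$ of a generator of $H^4(\hp^k;\Q)$.

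The $\hp$-type of the $1$-skeleton components upgrades the usual GKM divisibility from $\omega\mid u_i-u_j$ to $\omega_{ij}^2\mid u_i-u_j$, where $\omega_{ij}\in\{\alpha,\beta\}$ (up to rational scaling) labels the edge between $p_i$ and $p_j$. Writing $u_i = x_i\alpha^2 + y_i\alpha\beta + z_i\beta^2$, this forces $y_i=y_j$ regardless of which label connects $p_i$ and $p_j$, so all $y_i$ agree, while $\alpha$-connected pairs share $z$-coordinates and $\beta$-connected pairs share $x$-coordinates. Thus $(x_i,z_i)_{i=0,\dots,k}$ are $k+1\ge 3$ pairwise distinct points in $\Q^2$ any two of which share a coordinate; an easy induction shows any such configuration must lie on a single horizontal or vertical line.

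After swapping $\alpha$ and $\beta$ if necessary, assume all $z_i$ agree and the $x_i$ are distinct. Then $\beta^2\nmid u_i-u_j$ for every pair, so every edge is $\alpha$-labelled and all $k+1$ fixed points lie in a single component $N$ of $\Fix(\ker\alpha)$. The residual $S^1\cong T^2/\ker\alpha$-action on $N$ has exactly $k+1$ isolated fixed points, so Lefschetz yields $\chi(N)=k+1$; combined with $N$ being of $\hp^\ell$- or $S^4$-type and $k\ge 2$, this forces $\dim N=4k$, i.e.\ $N=\hp^k$, so $\ker\alpha$ acts trivially, contradicting the effectivity of $T^2$. The main technical point I expect to be delicate is justifying the improved divisibility $\omega_{ij}^2\mid u_i-u_j$; this should follow from the Borel construction identity $H^*_T(N) \cong H^*(B\ker\omega)\otimes H^*_{T/\ker\omega}(N)$ together with the ring structure of $H^*_{S^1}(\hp^\ell)$ or $H^*_{S^1}(S^4)$, which forces any two fixed-point restrictions to differ by an element of the square of the weight ideal.
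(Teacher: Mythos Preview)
Your argument has a genuine gap at the very first step. You write that $m=2$ ``so every $1$-skeleton component $N$ joining two fixed points has $\mu(N)=2$'', but this inference is invalid. The number $m$ counts the total edges between two vertices, and these can come from several components $N$ with $\sum \mu(N)=m$. The hypothesis of the Lemma is that each weight set $A_{ij}$ consists of \emph{two linearly independent} elements $a,b$; since $|A_{ij}|=m=2$, this forces one $a$-edge and one $b$-edge between every pair. Hence the $\Fix(\ker a)$-component and the $\Fix(\ker b)$-component through $p_i$ are \emph{different} submanifolds, each with $\mu=1$, i.e.\ of rational $\cp$-type rather than $\hp$-type. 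Consequently the correct GKM divisibility is $ab\mid u_i-u_j$, not $\omega_{ij}^2\mid u_i-u_j$; taking $a=\alpha$, $b=\beta$, this gives $x_i=x_j$ and $z_i=z_j$ for all $i,j$, while the $y_i$ are all distinct. Your combinatorial lemma about points pairwise sharing a coordinate, and the subsequent deduction that all edges carry the same label, therefore do not apply: in fact every pair carries \emph{both} labels, so both $\Fix(\ker\alpha)$ and $\Fix(\ker\beta)$ contain all $k+1$ fixed points, and neither can be the whole manifold by dimension count.

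The paper's proof instead exploits exactly this last observation to show that the $T^2$-isotropy representation is the same at every fixed point (the $b$-multiples are read off from the normal bundle of the connected $\Fix(\ker a)$-component, and vice versa), reduces to an $S^1$-action with identical weights at all fixed points, and then applies the Atiyah--Bott localization formula to $\langle x^k,[M]\rangle$ together with a Newton-identity argument on the power sums of the restrictions $w_i$ to derive a contradiction. Your equivariant-cohomology setup is reasonable and the identification of the $u_i$ is correct, but to finish along those lines you would need a further constraint relating the distinct $y_i$ to the (common) Euler classes $e_i=\prod_j \nu_{ij}$, which is essentially what the paper's localization step provides.
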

\begin{proof}This proof is due to M. Wiemeler, using a trick found in \cite{Li2012}.

Let $a,b\in \Z^{2}$ be primitive representatives of the weights. Then at each fixed point $p_i$ the integral weights are given by $r_1^i a, ..., r_k^i a$, $s_1^i b, ..., s_l^i b$ for some $r_*^*,s_*^*\in \Z$. Since the action is effective we have $T^2 = \ker(a) \oplus \ker(b)$, and by our knowledge of the graph there is a component $N$ of the fixed point set of $ker(a)$ containing all $p_i$. The isotropy representation of $ker(a)$ at $\Nu_{p_i}N$ now has weights $s_1^i b, ..., s_l^i b$. Since $N$ is connected, this representation is the same for all $p_i$, and we see $\{s_1^i, ..., s_l^i\}$ is independent of $i$. Using the same argument with $a$ and $b$ reversed, we see that we have the same $T^2$-representation at each $T_{p_i}M$.

However, there cannot even be an $S^1$-action with isolated fixed points such that all isotropy actions are the same:

If there was such a $S^1 \acton M$, let $x\in H^4(M, \Q)$ denote a generator and $\tilde{x}\in H^4_{S^1}(M, \Q)$ a lift of $x$ to equivariant cohomology. Then, for every $a\in \Q$, $\tilde{x}+a t^2$ is also a lift of $x$, where $t\in H^2(BS^1, \Q)$ is a generator, and by a localization formula going back to Atiyah and Bott \cite{atiyahbott}, we have for each $a$ that 
\begin{align*}
	0\ne \langle x^k, [M] \rangle =& \sum\limits_i \frac{(\tilde{x} + a t^2)^k}{(\prod\limits_j \nu_{ij})t^{2k}}[p_i]\\
=& \Big(\frac{1}{\prod \nu_j} \sum\limits_i (w_i+a)^k\Big)[p_i].
\end{align*}
Here $\tilde{x}|_{p_i} = w_i t^2$ and $\nu_{ij}t$ are the local weights at $p_i$. We used, going to the second line, that these do not depend on $i$. Comparing coefficients in $a^k$, we see that there are the same number of $p_i$ with negative and positive orientation. We split the index set into $I^-$ and $I^+$ accordingly and get 
\begin{align*}
\langle x^k, [M] \rangle =&\frac{1}{\prod \nu_j}\big(\sum\limits_{I^+}(w_i+a)^k - \sum\limits_{I^-}(w_i+a)^k\big)\\
=&\frac{1}{\prod \nu_j} \sum\limits_l \begin{pmatrix}k\\l\end{pmatrix} a^{k-l} \big(\sum\limits_{I^+}w_i^l - \sum\limits_{I^-}w_i^l\big).
\end{align*}
Since $\langle x^k, [M] \rangle$ does not depend on $a$, we see that $\sum\limits_{I^+}w_i^l - \sum\limits_{I^-}w_i^l$ must vanish for $l<k$. However, since $\langle x^k, [M] \rangle$ is not zero, it must not vanish for $l=k$. However, the space of symmetric polynomials on $|I^+|=|I^-|$ variables $x_j$ is generated by $\sum x_j^l$ for $l\le |I^{\pm}|=\frac{k+1}{2}<k$, giving us the desired contradiction.
\end{proof}

\begin{Cor}\label{smallintersection}Assume we are in the situation of Lemma \ref{graphlemma} and $M$ is not a rational sphere.
Then not all weight sets $\Omega_{ij}$ of the graph are identical.
\end{Cor}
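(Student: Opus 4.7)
The plan is to proceed by contradiction: assume every pairwise weight set $\Omega_{ij}$ equals a common set $\Omega$ and derive a contradiction. The case in which some $F_i$ is of rational $S^{2}$-type is already handled by Corollary \ref{s2cor}(2), so I restrict attention to the remaining alternative of Lemma \ref{spheregraph}, namely that every $F_i$ is isolated and $k_0\ge 3$.

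The key step is to combine Lemma \ref{omegaspans} with Lemma \ref{graphlemma}(2). Since $\Omega$ spans $\mathfrak{t}^{*}$, I pick two linearly independent weights $a,b\in\Omega$ and set $V=\langle a,b\rangle$. Because the same weight set occurs at every pair of vertices, the reduced graph obtained by keeping only edges with label in $V$ is complete on all $k_0$ vertices and hence forms a single connected component. Lemma \ref{graphlemma}(2), applied with $\dim V=2$, then bounds the number of edges between any two vertices of the reduced graph by $2$, forcing $\mu(a)=\mu(b)=1$ and $\Omega\cap V=\{a,b\}$ exactly.

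Let $N$ be the connected component of $\Fix(\ker V)$ containing the $F_i$. The residual $T^{2}=T^{d}/\ker V$-action on $N$ then has $k_0\ge 3$ isolated fixed points with every pairwise weight set equal to the linearly independent pair $\{a,b\}$. By the standing hypothesis $N$ must be of rational $\s{}$-, $\cp$-, or $\hp$-type, and I rule out each case in turn. If $N$ is a rational sphere (in particular if $N\cong_{\Q}\hp^{1}=\s{4}$), then $\chi(N)=2$ leaves no room for $k_0\ge 3$ isolated fixed points. If $N\cong_{\Q}\cp^{\ell}$, then \cite[Lemma 5.2a]{kww} forces every weight set on $N$ to be a singleton, contradicting the presence of both labels $a$ and $b$. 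The remaining case $N\cong_{\Q}\hp^{\ell\ge 2}$ is precisely the configuration excluded by Lemma \ref{wiemeler}.

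The main obstacle I expect is the bookkeeping in the second step: translating the assumption "all $\Omega_{ij}$ equal" into the precise statement that, for the plane $V$ spanned by any two weights of $\Omega$, the reduced graph is connected on all $k_0$ vertices with exactly two edges per pair. Once this is in place, and in particular once I know that all $F_i$ live in a single component $N$ of $\Fix(\ker V)$, the trichotomy on the rational type of $N$ dispatches all remaining possibilities and yields the desired contradiction.
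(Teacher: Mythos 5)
Your proof is correct and follows essentially the same route as the paper's: the $S^2$ case is delegated to Corollary \ref{s2cor}, and the isolated-fixed-point case is reduced, via restriction to the codimension-two subtorus $\ker\langle a,b\rangle$, to a contradiction with Lemma \ref{wiemeler}. The only difference is that you spell out the details the paper leaves implicit (using Lemma \ref{graphlemma}(2) to pin down $\Omega\cap V=\{a,b\}$ with multiplicity one, and excluding the $\s{}$ and $\cp$ models for $N$ before invoking Lemma \ref{wiemeler}), which is a faithful elaboration rather than a new argument.
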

\begin{proof}If all fixed point components are isolated fixed points, restriction to any codimension two subtorus will contradict Lemma \ref{wiemeler}. If there is a fixed point component of type $S^2$, this is Corollary \ref{s2cor}.
\end{proof}

With these Lemmas proven, we have gathered enough structure on the graph and its labels to continue the rest of the proof purely combinatorially. The main structural Lemma we will use to recover the condition $m\le 2$ is the following:

\begin{Lemma}\label{transversalweights}Assume we are in the situation of Lemma \ref{graphlemma} and $M$ is not a rational sphere.\\

If, in some triangle with weight sets $A,B,C$, we have $\langle A\rangle \nsubseteq \langle B \rangle$ and $\langle B\rangle \nsubseteq \langle A \rangle$, then the weight sets are, in a suitable basis and up to scaling, either given by $(A, B, C)=(\{a\},\{b\},\{a+b\})$ or $(\{a,a+c\},\{b,b+c\},\{a-b, a+b+c\})$ for linearly independent $a,b$ and where $c$ may be either linearly independent of $\{a,b\}$ or zero.

In particular, $m \le 2$. 
\end{Lemma}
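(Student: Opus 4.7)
The plan is to split by $m$ and combine two tools: Lemma \ref{lindep}, which gives a nontrivial relation $\alpha\prod A+\beta\prod B+\gamma\prod C=0$ among degree-$m$ polynomials in $H^*(BT,\Q)$, and Lemma \ref{graphlemma}(2), which bounds edge multiplicities in the reduced graph of any two-dimensional subspace $V\subset\mathfrak{t}^*$ by $2$.

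For $m=1$ the conclusion is immediate: Lemma \ref{lindep} gives a nontrivial $\Q$-linear dependence among the single weights $a,b,c$, and since $\langle a\rangle$ and $\langle b\rangle$ are distinct lines by hypothesis, the dependence must involve $c$ with a nonzero coefficient, giving $c=\lambda a+\mu b$ with $\lambda,\mu\ne 0$; rescaling $a,b,c$ then produces the first structure.

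For $m\ge 2$, I would fix $a_0\in A\setminus\langle B\rangle$ and $b_0\in B\setminus\langle A\rangle$, take $V=\langle a_0,b_0\rangle$, and apply Lemma \ref{graphlemma}(2) to the reduced graph for $V$: the three vertices of the triangle are all connected inside this reduced graph (via $a_0$ and $b_0$), so they lie in a single component with uniform multiplicity $m'\le 2$, forcing $|A\cap V|=|B\cap V|=|C\cap V|=m'\in\{1,2\}$. Running this for every two-dimensional subspace $V_{a,b}=\langle a,b\rangle$ spanned by linearly independent weights from $A$ and $B$, together with Lemma \ref{lindep} applied to each reduced triangle (which lives on a codimension-$(d-2)$ fixed set that is standard by hypothesis), yields a rigid ``no three collinear'' restriction on the projectivizations $\tilde A,\tilde B,\tilde C\subset\mathbb{P}(\mathfrak{t}^*)$. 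To exploit the polynomial identity itself, I would first show that all three coefficients $\alpha,\beta,\gamma$ are nonzero: vanishing of, say, $\alpha$ would force $\prod B$ and $\prod C$ to be proportional as polynomials in a UFD, hence $B=C$ as multisets up to scaling, which one checks is inconsistent with the transversality hypothesis; then reducing modulo $\langle b\rangle$ for each $b\in B$ kills $\prod B$ and shows that the multisets $A\bmod\langle b\rangle$ and $C\bmod\langle b\rangle$ coincide up to scaling in $\mathfrak{t}^*/\langle b\rangle$.

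The hard part will be assembling these mod-$\langle b\rangle$ congruences and the bound $m'\le 2$ into the specific structure $(\{a,a+c\},\{b,b+c\},\{a-b,a+b+c\})$ and ruling out $m\ge 3$. My plan is a direct algebraic attack: write $\prod C=-\tfrac{\alpha}{\gamma}\prod A-\tfrac{\beta}{\gamma}\prod B$, use that both sides must factor into $m$ linear forms over $\Q$, and show by case analysis that the only configurations consistent with the mod-$\langle b\rangle$ congruences and the 2D bound $m'\le 2$ have $m\le 2$ and the weights in the claimed shape, up to change of basis and sign (with the degenerate case $c=0$ accounted for separately).
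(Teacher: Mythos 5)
There is a genuine gap, on two levels. First, your main algebraic tool --- the global relation $\alpha\prod_{a\in A}a+\beta\prod_{b\in B}b+\gamma\prod_{c\in C}c=0$ in $H^*(BT,\Q)$ --- is not available. Lemma \ref{lindep} requires the manifold being acted on to be a rational $\s{}$, $\cp$ or $\hp$; in the situation of Lemma \ref{graphlemma} only the fixed point components of \emph{codimension-two} subtori are assumed standard, and $M$ itself is exactly what we are trying to control. The paper therefore only ever invokes Lemma \ref{lindep} after restricting to a two-dimensional weight subspace $V$, i.e.\ on the reduced triangle living in a codimension-two fixed component (this is also why, in the second structure of the conclusion, $c$ may be independent of $a,b$: the full triangle can span a three-dimensional weight space, so there is no codimension-two reduction containing all of $A\cup B\cup C$). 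Your mod-$\langle b\rangle$ congruences and the identity $\prod C=-\tfrac{\alpha}{\gamma}\prod A-\tfrac{\beta}{\gamma}\prod B$ all rest on this unavailable global relation. Your use of Lemma \ref{graphlemma}(2) to get $|A\cap V|=|B\cap V|=|C\cap V|\le 2$ for $V=\langle a_0,b_0\rangle$ is correct and is indeed one of the paper's ingredients, but it is the easy part.

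Second, even granting yourself the global relation, the actual content of the lemma --- ruling out $m\ge 3$ and pinning down the shape $(\{a,a+c\},\{b,b+c\},\{a-b,a+b+c\})$ --- is deferred to an unspecified ``case analysis,'' and it does not obviously follow from the ingredients you list. The paper's argument here is a genuine iteration: after splitting $\langle A\cup B\cup C\rangle=\langle a\rangle\oplus\langle b\rangle\oplus(\langle A\rangle\cap\langle B\rangle)$ and normalizing, one shows that a deviation $b_i=b+R_i$ with $R_i\ne 0$ forces, by repeatedly intersecting two-dimensional spans, new elements $b+(1+\tfrac{\gamma_1}{\gamma_i}+\dots+(\tfrac{\gamma_1}{\gamma_i})^k)R_i$ of $B$; finiteness of $B$ then forces $\gamma_1/\gamma_i=-1$, one propagates the same $R_i$ through $A$, $B$ and $C$, and a further finiteness argument shows at most one distinct nonzero $R$ can occur. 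Only two residual subcases are settled by the (reduced, two-dimensional) Lemma \ref{lindep}. Without something playing the role of this production-of-new-weights argument, the Sylvester--Gallai-type rigidity you are hoping for does not materialize from ``no three collinear'' constraints alone.
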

\begin{proof}\,\\
First, we may assume that $A, B, C$ are disjoint. Otherwise, after noting that the intersection of any two sets is the same as the intersection of all three, pick $s$ in the intersection and $a\in A$. Then $\langle s, a\rangle$ has nonempty intersection with $B$ since it contains $s$, and, by counting elements, must contain at least one other element of $B$. Since $\langle s,a\rangle$ is only two-dimensional, that means $\langle s, a\rangle$ is generated by its intersection with $B$, i.e. $a\in \langle B\rangle$ for all $a\in A$, contradicting our assumptions.\\

Pick now $a\in A\backslash \langle B \rangle$, $b\in B\backslash \langle A \rangle$ and split $\langle A\cup B \cup C \rangle$ into a direct sum $\langle a\rangle \oplus \langle b\rangle \oplus \langle A\rangle \cap\langle B\rangle$. To see that this is indeed the decomposition, note that $\langle B \cup a \rangle$ contains all of $A$ and $C$ by Lemma \ref{graphlemma}(2).

Now any element $\omega\in \langle A\cup B \cup C \rangle$ can be written as a sum $\omega = \omega|_a a + \omega|_b b + R$ for some scalars $\omega|_a, \omega|_b$ defined by this equation and some $R\in \langle A\rangle \cap\langle B\rangle$.\\

Written with respect to this decomposition, we have $a_i|_b=b_i|_a=0$ and $a_i|_a, b_i|_b\ne 0$ for all $i$. The first is clear from the definition, for the second note again that $\langle A\cup B \cup C \rangle$ is spanned by any choice of one full weight set and one weight from another weight set. If then $a_i|_a=0$, $\langle a_i, B\rangle$ does not contain $a$, a contradiction, and similarly if $b_i|_b=0$. For the same reason, $c_i|_a\ne 0, c_i|_b\ne 0$ for all $i$.

Order the elements of $A, B, C$ such that $a_1=a$, $b_1=b$. Scale now $a_i, c_i$ such that $a_i|_a=c_i|_a=1$.
Order $C$ such that $c_i \in \langle a, b_i\rangle$, i.e. $c_i = a + \gamma_i b_i$. Furthermore scale $b_i$ such that $b_i|_b=1$ for all $i$.

Assume now there is some $b_i=b+R_i$ with $R_i\ne 0$. The span of $b_i$ and $c_1$ gives an element of $A$ of the form $a - \gamma_1 R_i$, the span of this element and $c_i$ gives an element of $B$ of the form $b + (1 + \frac{\gamma_1}{\gamma_i})R_i$.

Iterating this construction gives elements in $B$ of the form $b + (1+ \frac{\gamma_1 }{\gamma_i} + \dots + (\frac{\gamma_1}{\gamma_i})^k)R_i$. Since there are only finitely many elements of $B$, this implies $\frac{\gamma_1}{\gamma_i}=0$ or a root of unity except 1, i.e. $\frac{\gamma_1}{\gamma_i}\in \{-1,0\}$. It cannot be 0 since $\gamma_1=c_1|_b\ne 0$, so $\gamma_i = -\gamma_1$ for all $i$ with $R_i\ne 0$. 

Scaling $b$ by $-\gamma_1$ and then again scaling $b_i$ such that $b_1=b, b_i|_b=1$ for $i>1$, we get $\gamma_i=1$ whenever $R_i\ne 0$, $c_1=a-b$.\\

If $\mu(a)=2$, i.e. there is a second edge with $R_i=0$, say $R_2$, repeat the above construction replacing $c_1$ with $c_2$ to find $c_2=c_1$ if there is some $R_i\ne 0$. \\

In total now we find the same $R_i$ in $A, B, C$: If $b_i = b + R_i$, by the previous paragraph we have $c_i = a + b + R_i$ and $\langle c_i, b_1\rangle$ gives us an $a_* = a + R_i$; reorder $A$ such that this is $a_i$.

Assume now there are some $R_i \ne R_j$. If $R_i \ne -R_j$, then in the span of $a_i$ and $b_j$ there is an element in $C$ of the form $a + b + R_i + R_j$, since after the last rescaling the only possible values for $\gamma_i$ are $\{+1, -1\}$ and $-1$ only appears at $c_k$ where $R_k=0$ (this appears in $\langle a_i, b_j\rangle$ when $R_i=R_j$). But this means $R_i+R_j$ appears as an $R_k$ somewhere, iteratively producing infinitely many $R_i$, a contradiction.

If $R_i = -R_j$, then in $\langle b_j, c_i \rangle$ we find some $a_*=a+2R_i$. From here we can go to the above case.\\

Now we look at all the possibilities:\\
If all $R_i$ are zero and $\mu(a)=1$, the weights can be scaled to $(\{a\}, \{b\}, \{a+b\})$.\\

If all $R_i$ are zero and $\mu(a)=2$, the weights are $(\{a,a\},\{b,b\},\{a+\gamma_1 b, a+\gamma_2 b\})$.

Using that $a^2, b^2$ and $(a+\gamma_1 b)(a+\gamma_2 b)$ are linearly dependent by Lemma \ref{lindep}, we see that $\gamma_1=-\gamma_2$ and we can scale such that $\gamma_1=-1, \gamma_2=1$.\\

If there is a nonzero $R_i$ and $\mu(a)=2$, we have by the above that $c_1= c_2=a-b$. Restricting to $\langle a,b \rangle$ gives an action with weights $(\{a,a\},\{b,b\},\{a-b, a-b\})$, contradicting Lemma \ref{lindep}.\\

We are left with there being exactly one nonzero $R_i=R$ and $\mu(a)=1$. This means the weights are $(\{a,a+R\},\{b,b+R\},\{a-b, a+b+R\})$, as claimed.
\end{proof}

With this in hand, we can go on to the main structural Lemmas:\\

\begin{Lemma}\label{atmostfour}Assume we are in the situation of Lemma \ref{graphlemma} and $M$ is not a rational sphere.

Then any weight set $A$ satisfies $dim\langle A \rangle \le 4$.
\end{Lemma}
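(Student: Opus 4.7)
The plan is to argue by contradiction. Suppose some weight set has $\dim\langle\cdot\rangle \ge 5$, and choose $A = A_{ij}$ of maximal possible span among all weight sets in the graph; in particular $\dim\langle A\rangle \ge 5$. The first step is a reduction to the case $\langle A\rangle = \mathfrak{t}^*$. Since $\dim\langle A\rangle \ge 4$, Lemma \ref{transversalweights} forces $\langle A\rangle$ to be comparable to $\langle A_{ik}\rangle$ for every other vertex $F_k$ (otherwise we would be in the transversal case with $\dim\langle A\rangle \le 3$). Maximality then yields $\langle A_{ik}\rangle \subseteq \langle A\rangle$ for every $k$, and the same argument at $F_j$ shows all weight sets at $F_j$ lie in $\langle A\rangle$. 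Lemma \ref{omegaspans} then forces $\langle A\rangle = \mathfrak{t}^*$, so $d = \dim\langle A\rangle \ge 5$.

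The second step is to extract combinatorial constraints on the projectivization $\tilde A \subset \mathbb{P}(\mathfrak{t}^*)$. Applying Lemma \ref{graphlemma}(2) to $1$-dimensional $V$ gives that each individual weight has multiplicity at most $2$ in $A$, and applying it to $2$-dimensional $V$ gives that any projective line of $\mathbb{P}(\mathfrak{t}^*)$ contains at most two points of $\tilde A$ (counted with multiplicity). Moreover, for any $2$-plane $V = \langle a_1, a_2\rangle$ spanned by two weights of $A$, constancy of the edge count in the connected component of the reduced graph $G_V$ containing $F_i$ and $F_j$ forces $m_V = 2$, and hence $|A_{ik}\cap V|, |A_{jk}\cap V| \in \{0,2\}$ for every third vertex $F_k$ whose corresponding component of $G_V$ meets that of $F_i, F_j$. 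Symmetric constraints apply exchanging the roles of $A$ and the other weight sets. In particular one obtains, for the triangles $(F_i, F_j, F_k)$, a rich incidence structure between the sets $\tilde A,\tilde A_{ik}, \tilde A_{jk}$ in $\mathbb{P}(\mathfrak t^*)$ generalizing the two-color configuration studied in Lemma \ref{s2comb} and Corollary \ref{s2cor}.

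The third step is to feed this configuration into a Sten Hansen-type theorem from real projective combinatorics. Corollary \ref{s2cor} used Skjelbred's Theorem~8 in \cite{skjel78} for the bipartite setting $(\tilde\Omega,\tilde N)$ to obtain $\dim\langle \Omega\rangle \le 3$. Here one needs the analogue for the ``tripartite'' configuration coming from a triangle of weight sets, plus possible loop contributions when one or more of the $F_\ell$ have positive dimension; the strengthened version of Skjelbred's theorem, promised in the discussion after Corollary \ref{s2cor}, yields the bound $\dim\langle A\rangle \le 4$ from these constraints. This contradicts the reduction $\dim\langle A\rangle \ge 5$ from Step~1 and finishes the proof.

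The main obstacle is the combinatorial step: the incidence conditions obtained from Lemma \ref{graphlemma}(2) alone are consistent with caps (point sets with no three collinear) of arbitrarily high projective dimension, so one must carefully use the multi-coloured interaction between $\tilde A$ and the other weight sets $\tilde A_{ik}, \tilde A_{jk}$ (and, when available, loops) to rigidify the configuration. The remaining steps are essentially bookkeeping, but this combinatorial rigidity statement is the true content of the lemma and is where the strengthened Sten Hansen input is needed.
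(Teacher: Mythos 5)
Your overall strategy --- turn the weight sets of a triangle into a projective incidence configuration and invoke Sylvester--Gallai / Hansen-type results --- is the same as the paper's, and your Step 1 reduction via Lemmas \ref{transversalweights} and \ref{omegaspans} is correct (though the paper does not need it). The proposal nevertheless has a genuine gap, and it sits exactly where you locate ``the true content of the lemma'': Step 3 appeals to an unstated ``strengthened, tripartite Sten Hansen-type theorem'' that you neither formulate nor prove. No such off-the-shelf statement exists, and as you yourself observe, the incidence data collected in Step 2 (each line meets $\tilde A$ in at most two points, reduced multiplicity $m_V\le 2$, etc.) are consistent with high-dimensional caps, so they cannot by themselves force $\dim\langle A\rangle\le 4$. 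Deferring the rigidity statement to an unproved black box leaves the proof incomplete.

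The missing idea is a projection from a point, which converts the three-colour triangle data into a \emph{two}-colour configuration to which Hansen's actual theorem applies. The paper fixes a triangle with weight sets $A,B,C$, not all equal by Corollary \ref{smallintersection}. If $T=A\cap B=A\cap B\cap C$ is nonempty, it projects $\mathbb{P}(\langle A\cup B\cup C\rangle)$ from a point of $T$; the image of $(A\cup B\cup C)\setminus T$ has the property that every line through two of its points contains a third, and Sylvester--Gallai already gives $\dim\langle A\rangle\le 3$. If $A,B,C$ are disjoint, it projects from a point $a\in A$ and sets $\Omega=\pi(B\cup C)$ and $N=\pi(A\setminus\{a\})\setminus\Omega$; lifting points back to the triangle shows (i) every line through two points of $\Omega$ meets $\Omega\cup N$ again, and (ii) every line through a point of $\Omega$ and a point of $N$ contains a second point of $\Omega$. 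Hansen's theorem applied to $\Omega$ then produces a hyperplane meeting $\Omega$ in one point plus a subset of a codimension-two subspace, and a short argument using (i) and (ii) shows that codimension-two part is a single point, whence $k\le 2$ and $\dim\langle A\rangle\le k+2\le 4$. This projection step and the final Hansen bookkeeping are the substance of the proof; your plan identifies the right ingredients but does not supply them.
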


For the proof of this, we use some tools from real projective combinatorics:

\begin{theorem*}[Sylvester-Gallai, '44]\,\\
Let $\Omega$ be a finite subset of some real projective space.\\
Then either there is a line containing exactly two points of $\Omega$, or all points in $\Omega$ are collinear.
\end{theorem*}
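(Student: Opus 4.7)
The plan is to give Kelly's 1948 proof: reduce to the Euclidean plane via an affine chart, and then apply a minimum-distance argument.

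First I would reduce to the plane. If $\Omega$ is not all collinear, fix three non-collinear points $a,b,c \in \Omega$, and let $P \cong \mathbb{RP}^2$ be the projective plane they span. Since every projective line $\ell \subset P$ satisfies $\ell \cap \Omega = \ell \cap (\Omega \cap P)$, an ordinary line for $\Omega \cap P$ inside $P$ is automatically an ordinary line for $\Omega$ in the ambient projective space. So I may assume $\Omega \subset \mathbb{RP}^2$. Since $\Omega$ is finite I may further choose an affine chart $\mathbb{R}^2 \subset \mathbb{RP}^2$ containing $\Omega$; projective lines through two points of $\Omega$ then correspond to affine lines with the same incidence data.

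For the Euclidean step, let $S$ be the set of pairs $(p,\ell)$ with $p \in \Omega$, $\ell$ a line through at least two points of $\Omega$, and $p \notin \ell$. Non-collinearity gives $S \ne \emptyset$, and $S$ is finite. Pick $(p_0,\ell_0) \in S$ minimizing the Euclidean distance $d(p_0,\ell_0)$, and suppose for contradiction that $\ell_0$ contains at least three points of $\Omega$. Let $f$ be the foot of the perpendicular from $p_0$ to $\ell_0$; by pigeonhole, two of these three points, say $a',b'$, lie on the same closed ray of $\ell_0$ from $f$ with $|fa'| \le |fb'|$. Let $\ell_0'$ be the line through $p_0$ and $b'$. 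Then $a' \notin \ell_0'$, since otherwise $\ell_0$ and $\ell_0'$ would share the two distinct points $a',b'$ and coincide, forcing $p_0 \in \ell_0$. Dropping a perpendicular from $a'$ to $\ell_0'$ with foot $g$, the right triangles $\triangle p_0 f b'$ and $\triangle a' g b'$ are similar, which gives
\[
|a'g| \;=\; |p_0 f| \cdot \frac{|a'b'|}{|p_0 b'|} \;<\; |p_0 f|,
\]
where the strict inequality uses $|a'b'| \le |fb'| < |p_0 b'|$ (the hypotenuse strictly exceeds either leg, since $|p_0 f| > 0$). Then $(a',\ell_0') \in S$ has strictly smaller distance, contradicting minimality.

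The only place requiring real care is the reduction step, which needs that lines inside the spanning plane $P$ do not accidentally pick up additional $\Omega$-points from outside $P$; this is automatic because a projective line contained in $P$ meets $\Omega$ only in $\Omega \cap P$. The Euclidean step is then elementary once the similar-triangles inequality is set up with the correct strict inequalities.
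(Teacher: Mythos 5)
Your argument is correct: it is Kelly's classical minimal-distance proof, and all the delicate points are handled properly — the reduction to a spanning projective plane (using that a line contained in $P$ meets $\Omega$ only in $\Omega\cap P$, so an ordinary line for $\Omega\cap P$ is ordinary for $\Omega$), the choice of an affine chart avoiding the finite point set, the pigeonhole placement of two of the three points on one closed ray from the foot $f$, and the strict inequality $|a'b'|\le |fb'| < |p_0b'|$ which makes the new incident pair strictly closer. Note, however, that the paper does not prove this statement at all: it quotes Sylvester--Gallai as a classical black box (alongside Hansen's higher-dimensional generalization) and only uses it inside the proof of Lemma \ref{atmostfour}, so there is no internal proof to compare against. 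Your write-up is a self-contained and standard proof of the cited result; the only stylistic remark is that one can shortcut the similar-triangles computation by observing that the distance to $\ell_0'$ is affine along the segment $[f,b']$ and hence maximized at $f$, where it equals the altitude of the right triangle $p_0fb'$ onto its hypotenuse, which is strictly shorter than the leg $|p_0f|$ — but your version is equally valid.
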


as well as its higher-dimensional generalization

\begin{theorem*}[Sten Hansen]\cite{sh}\,\\
Let $\Omega$ be a finite point set in $d$-dimensional real projective space $\mathbb{P}^d$ which is not contained in a hyperplane. Then among the hyperplanes $\mathbb{P}^{d-1}$ determined by points of $\Omega$ there is at least one such that the points of $\Omega$ which it contains, except exactly one, are contained in a $(d-2)$-dimensional subspace $\mathbb{P}^{d-2}$.
\end{theorem*}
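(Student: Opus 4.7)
The plan is to prove the theorem by induction on the ambient dimension $d$, with the Sylvester–Gallai theorem serving as the base case $d = 2$. For Sylvester–Gallai itself I would use Kelly's classical closest-pair argument: in an affine chart of $\mathbb{R}^2$, suppose for contradiction that every line through two points of $\Omega$ contains a third, and pick a pair $(p, \ell)$ with $p \in \Omega$ and $\ell$ a line spanned by two points of $\Omega$ not containing $p$, minimizing the Euclidean distance from $p$ to $\ell$. The supposed third point of $\Omega$ on $\ell$ forces two of the three collinear points to lie on the same closed half-line from the foot of the perpendicular, and replacing $(p, \ell)$ by the pair consisting of the closer of these and the line joining $p$ to the further one yields a strictly smaller distance, a contradiction. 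In Hansen's formulation at $d = 2$, the resulting ordinary line contains exactly two distinct points of $\Omega$: one of them is the ``exception'', and the other is the $0$-dimensional subspace containing the rest.

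For the inductive step I would assume the theorem for $\mathbb{P}^{d-1}$ and take a finite $\Omega \subset \mathbb{P}^d$ spanning $\mathbb{P}^d$. Picking a point $p \in \Omega$ and a hyperplane $K \cong \mathbb{P}^{d-1}$ of $\mathbb{P}^d$ not containing $p$, I would project from $p$ to obtain $\Omega' := \pi_p(\Omega \setminus \{p\}) \subset K$. A short dimension count, exploiting the fact that $\Omega$ spans $\mathbb{P}^d$, shows that $\Omega'$ spans $K$ regardless of whether $\Omega \setminus \{p\}$ itself does. The inductive hypothesis then supplies a hyperplane $H' \subset K$ spanned by $\Omega'$, a unique exceptional point $q' \in \Omega' \cap H'$, and a $(d-3)$-subspace $L' \subset H'$ containing $(\Omega' \cap H') \setminus \{q'\}$. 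Pulling back along $\pi_p$, we obtain a hyperplane $H := \pi_p^{-1}(H') \cup \{p\}$ of $\mathbb{P}^d$ spanned by points of $\Omega$, and a $(d-2)$-subspace $L := \pi_p^{-1}(L') \cup \{p\}$ of $H$ containing $p$ together with all preimages of $\Omega' \cap L'$.

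The main obstacle is that $\pi_p^{-1}(q') \cap \Omega$ may contain several points, producing multiple ``exceptions'' in $H$ lying on a common line $\ell$ through $p$ rather than the single exception required. To handle this I would choose the projection center $p$ carefully so that no line through $p$ contains more than one other point of $\Omega$; then $\pi_p|_{\Omega \setminus \{p\}}$ is injective, the fibre over $q'$ has size one, and the argument above closes. Such a $p$ exists for generic configurations; in the pathological case where every point of $\Omega$ lies on some rich line, I would run a secondary induction on $|\Omega|$, removing a point that is collinear with two others of $\Omega$ (so that the span of $\Omega$ is preserved) and reapplying the projection argument to the smaller set. A separate short check handles the degenerate case $|\Omega| = d + 1$, in which $\Omega$ is in linearly general position and every hyperplane spanned by $d$ of the points is automatically ordinary. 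Managing this case distinction, rather than the projection itself, is the technical heart of the proof.
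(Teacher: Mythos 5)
First, note that the paper does not prove this statement: it is quoted as an external result of Sten Hansen \cite{sh} and used as a black box in the proof of Lemma \ref{atmostfour}, so your proposal can only be judged on its own merits. On those merits it has a genuine gap, and it sits exactly at the point you yourself flag as "the technical heart of the proof." The projection-and-induct scheme is the natural first attempt, and your analysis of it is correct up to the fibre problem: the pulled-back hyperplane $H=\pi_p^{-1}(H')$ is Hansen-ordinary for $\Omega$ only if the line through $p$ and the exceptional point of $\Omega'$ meets $\Omega$ in exactly two points. Neither of your two proposed repairs closes this. A projection centre $p$ through which every $\Omega$-line is ordinary need not exist for \emph{any} point of $\Omega$ (take two skew lines in $\mathbb{P}^3$ carrying three or more points each: every point of the configuration lies on a line containing at least three points of $\Omega$, yet the set spans $\mathbb{P}^3$ and is in no way "pathological" for the theorem). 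And the secondary induction on $|\Omega|$ does not transfer the conclusion back: if you delete a point $x$ collinear with two others and obtain a Hansen-ordinary hyperplane $H$ for $\Omega\setminus\{x\}$, then whenever $x\in H$ the set $\Omega\cap H$ may have two points outside the witnessing $(d-2)$-flat, and nothing in your argument produces a different hyperplane in that event. Since the inductive hypothesis hands you a single hyperplane with no control over whether the deleted point lies on it, the induction simply does not restart.

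This is not a cosmetic issue: the failure of the naive projection/deletion induction is precisely why Hansen's theorem is a theorem and not an exercise. Already the case $d=3$ (Motzkin's problem on ordinary planes) resisted the obvious reduction to Sylvester--Gallai, and Hansen's actual proof is a substantially longer argument (a careful extremal/minimization analysis in the spirit of Kelly's proof, but carried out for point--hyperplane pairs with additional case analysis), not a three-line pullback. Your base case via Kelly's closest-pair argument is fine, and the dimension counts in the inductive step (that $\Omega'$ spans $K$, that $H$ is spanned by points of $\Omega$, that the cone over $L'$ is a $(d-2)$-flat absorbing $p$) are all correct; what is missing is the entire mechanism for handling the case where every candidate projection centre lies on a rich line, which is the generic situation rather than the pathological one. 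To salvage the write-up you would either need to reproduce Hansen's extremal argument or find a genuinely new way to select $p$ and the exceptional fibre simultaneously; as it stands, the proposal is an accurate description of why the theorem is hard rather than a proof of it.
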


Using these Theorems for proving results about weight sets of rational torus actions is something that goes back at least to work of Skjelbred, cf the following proof with \cite[Theorem 8]{skjel78}.

\begin{proof}[Proof of Lemma \ref{atmostfour}\,]\,

Pick any triangle where $A$ is the set of weights at one edge. Let $B, C$ denote the weight sets at the other two edges. By Cor \ref{smallintersection} we may assume $A, B, C$ are not all the same.

If $T:=A \cap B = A \cap B \cap C$ is nonempty, pick $t\in T$ and pass to $\mathbb{P}^k = \mathbb{P}(T_t\mathbb{P}(\langle A\cup B\cup C\rangle))$.

 Here $k = \dim\mathbb{P}(\langle A\cup B\cup C\rangle)-1\ge \dim\langle A\rangle -2$.\\
 Denote by $\Omega$ the image of $(A\cup B \cup C)\backslash T$ in $\mathbb{P}^k$. Then $\Omega$ spans $\mathbb{P}^k$: Pick $s\in T, a \in A\backslash T$. Then $\langle a, s \rangle$ contains an element $b\in B\backslash T$ by counting and we see $s\in \langle a, b \rangle$, so $\langle (A\cup B \cup C)\backslash T\rangle = \langle A\cup B \cup C\rangle$.
 
 Set $\tilde{A}:=A\backslash T$, define $\tilde{B}, \tilde{C}$ likewise.
 
 We may lift each $\omega\in \Omega$ to each of the three sets $\tilde{A}, \tilde{B}, \tilde{C}$: Let $a\in \tilde{A}$ be a lift of $\omega$. Then $\langle t, a\rangle$ has nonempty intersection with $B$ and $C$ since they contain $t$, so the intersection with $B, C$ must contain the same number of elements as the intersection with $A$, which is more than the number of elements in $\langle t,a\rangle \cap T$. These additional elements are lifts of $\omega$ in the sets $\tilde{B}$, $\tilde{C}$.
 
 Pick $\omega_1 \ne \omega_2 \in \Omega$. Pick now a lift $a_1\in \tilde{A}$ of $\omega_1$ and a lift $b_2 \in \tilde{B}$ of $\omega_2$. in $\langle a_1, b_2\rangle$ there is an element $c_3\in \tilde{C}$. The corresponding $\omega_3\in \Omega$ is an element on the line through $\omega_1$ and $\omega_2$ in $\Omega$, which can be neither $\omega_1$ nor $\omega_2$, since by construction $\tilde{A}, \tilde{B}$ and $\tilde{C}$ don't intersect. Since $\omega_{1,2}$ were arbitrary, by the Theorem of Sylvester-Gallai $k\le 1$, i.e. in this case $\dim\langle A\rangle \le 3$.\\
 
 Assume now that $A, B, C$ are disjoint. 
 
 Pick $a\in A$ and move to $\mathbb{P}^k = \mathbb{P}(T_a\mathbb{P}(\langle A\cup B\cup C\rangle))$.\\
 Let the projection be called $\pi$ and define the sets $\Omega := \pi(B\cup C), N:=\pi(A\backslash \{a\})\backslash \Omega$.\\
 Once more, by construction, $\Omega$ generates $\mathbb{P}^k$. We claim the following:
 \begin{itemize}
 \item On any line through two points of $\Omega$ lies a third point of $\Omega \cup N$.
 \item On any line through a point of $\Omega$ and a point of $N$ lies another point of $\Omega$.
 \end{itemize}
For the first, lift the two points of $\Omega$ to $B$ and $C$ respectively. Their span contains a point of $A$, whose projection is the desired point of $\Omega \cap N$.

For the second, lift the point of $\Omega$ to $B$, the point of $N$ to $A$. Their span contains a point of $C$, whose projection is the desired point of $\Omega$.\\

Now apply Sten Hansen's Theorem \cite{sh} to the set $\Omega$ to get a codimension 1 subspace $\mathbb{P}^{k-1} $ and a codimension 2 subset $\mathbb{P}^{k-2}$ such that $\Omega_i := \Omega \cap \mathbb{P}^{k-i}$ satisfy $\Omega_1 = \Omega_2 \dot{\cup} \{\omega\}$ and where $\Omega_i$ generates $\mathbb{P}^{k-i}.$

Denote by $xy$ the line through two points $x, y$. If now $\Omega_2$ contains more than one element, say two different points $\omega_1 \ne \omega_2$, pick a point $\nu \in \omega_1\omega \cap (\Omega \cup N)$. By construction of $\Omega_1$ we have $\nu \in N$. Now there must be a point $\omega_3 \in \omega_2 \nu \cap \Omega$, but $\omega \notin \omega_2 \nu$ and $\omega_2 \nu \cap \Omega_2 = \{\omega_2\}$, a contradiction to $\omega_3 \ne \omega_2$. 

Therefore $\mathbb{P}^{k-2}$ can contain at most one point, i.e. $\dim\mathbb{P}^{k-2} \le 0$, so  $\dim\langle A\rangle \le k+2 \le 4$.
\end{proof}

\begin{Lemma}\label{smallorbig}Assume we are in the situation of Lemma \ref{graphlemma} and $M$ is not a rational sphere.

Then all weight sets $A$ satisfy $\dim\langle A \rangle\in \{1,2,d-1,d\}$. If $d>3$, either all are in $\{1,2\}$ or all are in $\{d-1,d\}$. Furthermore, if $d>3$ and there is a weight set of dimension $d-1$ at a component $F$, then there is also one of dimension $d$.
\end{Lemma}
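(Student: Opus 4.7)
The plan exploits Lemma \ref{transversalweights} and Lemma \ref{omegaspans} to establish a chain structure on the weight-set spans, from which all three assertions follow.

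First I reduce to the nontrivial case: by Lemma \ref{graphlemma}(1), every weight set has cardinality $m$, so if $m\leq 2$ every $\dim\langle A\rangle\leq 2$ and all claims hold trivially. So assume $m\geq 3$. Then the non-nesting hypothesis of Lemma \ref{transversalweights} would give $m\leq 2$, a contradiction; hence in every triangle the weight-set spans are pairwise nested. Applied to triangles $F_l, F_k, F_{k'}$, this forces the family $\{\langle A_{lk}\rangle\}_k$ at each vertex $F_l$ to be totally ordered by inclusion -- a chain. By Lemma \ref{omegaspans}, the weights on edges from $F_l$ span $\mathfrak t^*$, and by total ordering they all lie in the chain's maximum, so this maximum equals $\mathfrak t^*$. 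Thus every vertex has an incident weight set of dimension exactly $d$.

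The third assertion now follows immediately: if $\dim\langle A_{lk}\rangle = d-1$ at $F_l$ with $d>3$, then $m\geq d-1\geq 3$, and the chain argument at $F_l$ produces a weight set of dimension $d$. For the first assertion, suppose for contradiction $3\leq r:=\dim\langle A\rangle\leq d-2$. Then $m\geq r\geq 3$, so a dim-$d$ weight set exists, but Lemma \ref{atmostfour} caps this at $4$, forcing $d\leq 4$ and hence $r\leq d-2\leq 2$, a contradiction. So $\dim\langle A\rangle\in\{1,2,d-1,d\}$.

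Finally, the dichotomy. For $d\geq 5$, any weight set of dimension $\geq 3$ would give $m\geq 3$ and hence a dim-$d\geq 5$ weight set by the chain argument, violating Lemma \ref{atmostfour}; so all dimensions lie in $\{1,2\}$. The remaining subcase $d=4$ with $m\geq 3$ is the main obstacle: the chain argument gives a dim-$4$ weight set at every vertex but does not on its own exclude coexistence of small ones. Here I would apply Lemma \ref{lindep} to a triangle $F_l, F_m, F_{k^\star}$ with $A_{lm}$ small and $\langle A_{lk^\star}\rangle=\mathfrak t^*$: the polynomial identity among the three products forces $\prod A_{mk^\star}$ to factor either so that $A_{mk^\star}$ coincides with $A_{lk^\star}$ up to scaling (dim $d$) or so that $A_{mk^\star}\subseteq\langle A_{lm}\rangle$ (small); propagating this rigid alternative through the connected graph then yields a global contradiction.
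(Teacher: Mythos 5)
Your chain argument is sound and, for what it proves, is a genuinely cleaner route than the paper's: from Lemma \ref{transversalweights} with $m\ge 3$ you get that the spans of the weight sets at a fixed vertex are pairwise nested, hence totally ordered, and Lemma \ref{omegaspans} forces the maximum of that chain to be all of $\mathfrak{t}^*$, so every vertex carries a weight set of full dimension $d$. Combined with Lemma \ref{atmostfour} this correctly yields the first assertion, the third assertion, and the dichotomy for $d\ge 5$. (The paper instead increments the dimension one step at a time using a second weight $\omega_2$ and does not need Lemma \ref{atmostfour} for the first assertion; your use of that lemma is legitimate, since it is proved beforehand and independently.)

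The genuine gap is exactly where you flag it: the dichotomy for $d=4$. The sketch via Lemma \ref{lindep} does not work as written: that lemma concerns actions on a rational $\s{}$, $\cp$ or $\hp$, so it does not apply to $M$ directly, and linear dependence of the three products of linear forms does not by itself force the factorization alternative you describe. The missing ingredient is quantitative rather than the mere nesting you extracted from Lemma \ref{transversalweights}: for any two weight sets $X,Y$ of a common triangle and any single $\omega\in Y$ one has $Y\subset\langle X\cup\{\omega\}\rangle$ --- this is the consequence of Lemma \ref{graphlemma}(2) already used at the start of the proof of Lemma \ref{transversalweights} --- and therefore $|\dim\langle X\rangle-\dim\langle Y\rangle|\le 1$ whenever $X$ and $Y$ share a triangle. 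This bound immediately rules out a weight set of dimension $\le 2$ meeting a weight set of dimension $4$ at a common vertex when $d=4$; and if the small set $A$ sits at $(F_A,F_A')$ and the large one at $(F_B,F_B')$ with no common vertex, one inspects the weight set $C$ on the connecting edge $(F_A,F_B)$: the gap-at-most-one bound against $A$ at $F_A$ forces $\dim\langle C\rangle\le 3$, while the same bound against the full-dimensional set at $F_B$ forces $\dim\langle C\rangle\ge 3$, and then upgrading $C$ at $F_A$ to dimension $4$ (your third assertion) contradicts the bound against $A$. Without this containment the chain structure alone cannot exclude the coexistence of dimensions $2$ and $4$ in a single chain, so the $d=4$ case of the dichotomy remains unproved in your proposal.
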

\begin{proof}\,\\
Choose an edge $(F_A, F_{A}')$ whose weight set $A$ has $3\le \dim\langle A\rangle\le d-1$.

By Lemma \ref{omegaspans}, there is another weight $\omega$ at $F_A$ not in $\langle A\rangle$. By Lemma \ref{transversalweights}, we have $\langle A \rangle \subset \langle B \rangle$, where $B$ is a weight set at $F_A$ containing $\omega$. Since we have $B \subset \langle A\cup \omega\rangle$, we get $\dim\langle B \rangle=\dim\langle A \rangle +1$, proving the last claim. If now $\dim\langle A\rangle\le d-2$, pick another weight $\omega_2$ at $F_A$ not in $\langle A\cup \omega \rangle$. By the same reasoning, we must have $\langle A \rangle \subset \langle C \rangle$ for the corresponding weight set $C$ containing $\omega_2$, and $\dim\langle C \rangle=\dim\langle A\rangle +1$.

Now the triangle spanned by $B$ and $C$ satisfies the assumptions of Lemma \ref{transversalweights}, so $m\le 2$. This is a contradiction since $A$ has at least $\dim\langle A\rangle \ge 3$ elements.\\

Assume now $d>3$ and that there are two weight sets $A, B$ at edges $(F_A, F_{A}'), (F_B, F_{B}')$ with $\dim\langle A \rangle\le 2$ and $\dim\langle B \rangle\ge d-1$. 

As above, if $\dim\langle B \rangle = d-1$, we can find another weight set at $F_B$ where this dimension is $d$ and replace $B$; we assume $\dim\langle B \rangle = d$ going forward.

Consider now the weight set $C$ at $(F_A, F_B)$. If $\dim\langle C \rangle\le 2$, we obtain a contradiction at $F_B$, since $\dim\langle B \rangle$ and $\dim\langle C \rangle$ differ by at least two by construction. Similarly, if $\dim\langle C \rangle\ge d-1$, we find another weight set $C'$ at $F_A$ where this dimension is $d$ and get a contradiction at $F_A$ using $C'$ and $A$.    
\end{proof}

With these Lemmas in hand, the proof of Theorem \ref{maincomb} is straightforward:

\begin{proof}[Proof of Theorem \ref{maincomb}]\,\\
Since $d\ge 5$, Lemmas \ref{atmostfour} and \ref{smallorbig} combine to show $\dim(\langle A \rangle)\le 2$ for any set of weights $A$. Pick a weight set $A$ at some $(F_A, F_{A}')$ and $\omega_1, \omega_2$ at $F_A$ with $\omega_1\notin \langle A \rangle, \omega_2 \notin \langle \omega_1 \cup A \rangle$. Then the weight sets containing $\omega_1$ and $\omega_2$ span a triangle satisfying the condition of Lemma \ref{transversalweights}, so $m\le2$ and the claim follows by Lemma \ref{graphlemma}(5) (i.e.\cite[Lemmas 5.3-5.5]{kww}).
\end{proof}

\begin{Rem}[On the case d=4]\label{t4remark}\,\\
If $d=4$ in Theorem \ref{maincomb}, Lemma \ref{smallorbig} still applies. If there is a fixed point component of type $S^2$, we recover that $M$ is standard by use of Corollary \ref{s2cor}. If all fixed points are isolated and we are in the first case of Lemma \ref{smallorbig}, i.e. all weight sets have dimension at most two, we may proceed as before to recover $M$ as a complex or quaternionic projective space. Otherwise, we must have a weight set with $4$-dimensional span, in particular $m\ge 4$. If we let $k\ge 3$ denote the number of fixed points, recovery of the Cayley Plane amounts to showing that in fact $m=4, k=3$ is the only possible configuration. While this is an open problem in general, it follows for example from the additional assumption that there is a rank one symmetric space of the same dimension and Euler characteristic as $M$, as one might check by examination ($M$ has Euler characteristic $k$ and dimension $2m(k-1)$). Using this, we are able to recover the second case of \cite[Theorem C]{kww} in the case $d=4$.
\end{Rem}

\newpage
\section{Appendix A}
\renewcommand*{\arraystretch}{0.8}
In this section our goal is to give an elementary proof of

\begin{Lemma}\label{polylemma}
Let $c_k$ be the mod $p$ reduction of the Chern class in degree $2k$, \\$k=\lambda p^i, p\nmid \lambda>p$. \\

Then $c_k$ is, modulo products of lower degree Chern classes, nonzero multiple of $P^{p^i}(c_{(\lambda-p+1)p^i})$ if $p$ is odd, or of \\
$P^{2^i}(c_{(\lambda-1)2^i}), \lambda = 3 \mod 4$ resp.
$P^{2^{i+1}}(c_{(\lambda-3)2^i}), \lambda = 1 \mod 4$, if $p=2$.
\end{Lemma}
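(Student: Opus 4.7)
The approach is via the splitting principle. Embed $H^*(BU; \mathbb{F}_p)$ into the ring of symmetric polynomials $\mathbb{F}_p[x_1, x_2, \ldots]^{S_\infty}$ with $|x_j| = 2$, so that $c_r$ becomes the $r$-th elementary symmetric polynomial $e_r$. The Steenrod axioms determine $P^a$ on each generator: $P^0(x_j) = x_j$, $P^1(x_j) = x_j^p$, and $P^a(x_j) = 0$ for $a \geq 2$. Encoding these in the total Chern class $c(t) = \prod_j (1 + x_j t)$, iterated Cartan gives
$$P^{p^i}(c(t)) = \sum_{|S| = p^i} t^{p^i} \Bigl(\prod_{j \in S} x_j^p\Bigr) \prod_{j \notin S}(1 + x_j t).$$
Extracting the $t^m$-coefficient with $m = (\lambda - p + 1)p^i$ produces an explicit symmetric polynomial expression for $P^{p^i}(c_m)$, which expands uniquely in the basis $\mathbb{F}_p[c_1, c_2, \ldots]$ by the fundamental theorem of symmetric functions.

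The goal is then to isolate the coefficient $\alpha$ of $c_k$ in this expansion, since all remaining terms are products of two or more $c_r$ with $r < k$, i.e.\ products of lower-degree Chern classes. A careful combinatorial bookkeeping, or equivalently a direct appeal to the classical Wu formula ($p=2$) and its Brown--Peterson analogue (odd $p$), identifies $\alpha$ (up to a conventional sign) as the binomial coefficient $\binom{(\lambda - p + 1)p^i - 1}{p^i}$ modulo $p$. The subtle point here is that the monomials appearing in the displayed formula for $P^{p^i}(c_m)$ all have exponent pattern $(\underbrace{p, \ldots, p}_{p^i}, \underbrace{1, \ldots, 1}_{m - p^i})$, so in particular none is squarefree; the coefficient of $e_k$ therefore emerges only after a nontrivial change of basis from monomial symmetric functions to elementary ones. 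I would carry this out by induction on $\lambda$, reducing successively smaller cases using the Cartan formula and the Frobenius identity $e_{p^i}(x_1^p, x_2^p, \ldots) = e_{p^i}(x_1, x_2, \ldots)^p$ in characteristic $p$.

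The final step is Lucas's theorem. Writing $\lambda - p + 1$ in base $p$ with lowest digit $m_0$, the hypothesis $p \nmid \lambda$ translates to $m_0 \neq 1$, and an easy case split (whether $m_0 \geq 2$ or $m_0 = 0$, the latter forcing a borrow) shows that the base-$p$ digit of $N = (\lambda - p + 1)p^i - 1$ at position $i$ is nonzero. Since $p^i$ has its unique nonzero base-$p$ digit equal to $1$ at position $i$, Lucas yields $\binom{N}{p^i} \not\equiv 0 \pmod p$. For $p = 2$, one proceeds identically with $P^i = Sq^{2i}$; the case distinction between $\lambda \equiv 1$ and $\lambda \equiv 3 \pmod 4$ arises because the 2-adic digit of $\lambda - 1$ at position $1$ governs the minimal exponent $j$ for which the leading coefficient of $P^{2^j}$ applied to the corresponding lower Chern class is odd. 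The main obstacle throughout is the symmetric-function calculation that identifies $\alpha$ as the stated binomial; once that is in hand, the Lucas step is routine.
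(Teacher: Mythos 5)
Your route is genuinely different from the paper's. Appendix~A deduces Lemma~\ref{polylemma} from a stronger statement about arbitrary monomial symmetric functions, proved by a downward induction on the lexicographic order, precisely so as to avoid quoting (or recomputing) the Wu/Brown--Peterson leading coefficient; you instead compute $P^{p^i}(c_m)$, $m=(\lambda-p+1)p^i$, as the single monomial symmetric function of exponent pattern $(p,\dots,p,1,\dots,1)$, extract the coefficient of $c_k$ in its expansion in elementary symmetric functions, and finish with Lucas. Your endgame is correct and is a nice addition: writing $N=(\lambda-p+1)p^i-1=(\lambda-p)p^i+(p^i-1)$, the base-$p$ digit of $N$ in position $i$ is $\lambda-p\equiv\lambda\not\equiv 0$, so $\binom{N}{p^i}\equiv\lambda\pmod p$ is a unit. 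But the heart of the lemma is the identification of the leading coefficient with $\binom{m-1}{p^i}$, and your proposal does not contain it. Either you cite Wu and Brown--Peterson --- in which case your argument is essentially the paper's short proof of Lemma~\ref{appalemma} in Section~2, not an independent proof of Lemma~\ref{polylemma}, whose purpose is exactly to avoid that citation --- or you prove it yourself, and ``I would carry this out by induction on $\lambda$ using Cartan and Frobenius'' is not yet an argument. What would close the gap is the classical transition-matrix fact that, modulo decomposables, $m_\mu\equiv(-1)^{n-\ell(\mu)}\,n(\ell(\mu)-1)!\,/\prod_i m_i(\mu)!\cdot e_n$ for $\mu\vdash n$; applied to $\mu=(p^{p^i},1^{m-p^i})$ this reproduces $\binom{m-1}{p^i}$ up to sign, but it has to be stated and proved or properly referenced.

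Your $p=2$ discussion does not work as described. Running your own method at $p=2$, Lucas gives that the $i$-th binary digit of $(\lambda-1)2^i-1=(\lambda-2)2^i+(2^i-1)$ equals $\lambda\bmod 2=1$, so the coefficient of $c_k$ in $Sq^{2\cdot 2^i}(c_{(\lambda-1)2^i})$ is odd for \emph{every} odd $\lambda$; your approach therefore produces no case distinction modulo $4$ at all, and the mechanism you propose for it (parity of a digit of $\lambda-1$ controlling the minimal admissible $j$) does not correspond to anything that happens in either computation. The paper's case split is an artifact of its inductive reduction to monomial symmetric functions of the form $((n)^{2^l})$, where the relevant residue is that of the exponent $n$, not of $\lambda$. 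Note also that the $\lambda\equiv 1\pmod 4$ clause of the statement as printed, $P^{2^{i+1}}(c_{(\lambda-3)2^i})$, lands in degree $2(\lambda-1)2^{i}$ rather than $2\lambda 2^{i}$ and so cannot be read literally; your computation, if completed, would in fact establish the cleaner assertion that for $p=2$ one always has $c_k\equiv$ a unit times $P^{2^i}(c_{(\lambda-1)2^i})$ modulo decomposables, which is stronger than what you set out to prove but should be flagged as a deviation from the stated lemma.
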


This implies directly 

\begin{cor*}
The cohomology ring $H^*(BU, \Zp)$ is generated, as $\mathcal{A}_p$-algebra, in degrees $\{2\lambda p^k | k\ge 0, \lambda\le p-1\}$.

In particular, a generating system is given by the mod $p$ reduction of the Chern classes $\{c_{\lambda p^k} | k\ge 0, \lambda \le p-1\}$.
\end{cor*}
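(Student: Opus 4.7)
The plan is to exploit the splitting principle: view $H^*(BU, \Zp)$ as the ring of symmetric polynomials $\Zp[x_1, x_2, \ldots]^{S_\infty}$ (where $|x_l|=2$) and identify $c_k$ with the elementary symmetric polynomial $e_k$. On the degree-$2$ generators the Steenrod action is fully determined by $P^0(x_l)=x_l$, $P^1(x_l)=x_l^p$ and $P^j(x_l)=0$ for $j\ge 2$; everything else follows from the Cartan formula. The upshot is that
\[
P^{p^i}(c_m) \;=\; m_{(p^{p^i},\,1^{m-p^i})}(x_1,x_2,\dots),
\]
the monomial symmetric polynomial obtained by raising $p^i$ of the variables to the $p$-th power and leaving $m-p^i$ of them linear. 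Plugging in $m=(\lambda-p+1)p^i$ gives a symmetric polynomial of total weight $\lambda p^i = k$, which is therefore a $\Zp$-linear combination of $e_\mu$ for partitions $\mu$ of $k$.

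The central task is purely symmetric-function-theoretic: expand $m_{(p^{p^i},\,1^{m-p^i})}$ in the polynomial basis $\{e_\mu\}$ and isolate the coefficient of the \emph{pure} term $e_{(k)}=e_k$; every other $e_\mu$ is by definition a product of lower Chern classes. I plan to do this iteratively using the elementary identity
\[
p_p\cdot m_{(p^{j-1},\,1^{m-j})} \;=\; m_{(p^j,\,1^{m-j})} \;+\; (\text{terms with a part strictly larger than }p),
\]
together with Newton's identities to rewrite the occurring power sums $p_n$ as polynomials in $e_1,\dots,e_n$. Tracking only the $e_k$-contribution through this recursion (and discarding anything that visibly decomposes into products of lower $e_j$'s) collapses the computation to a single explicit binomial coefficient; for odd $p$ a direct unwinding produces, up to a nonzero sign, $\binom{(\lambda-p+1)p^i-1}{p^i}$.

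The final step is to show this binomial is nonzero in $\Zp$, which is where the hypotheses $p\nmid\lambda$ and $\lambda>p$ enter. By Lucas's theorem, $\binom{n}{p^i}\bmod p$ equals the $i$-th base-$p$ digit of $n$, so one expands $(\lambda-p+1)p^i-1$ in base $p$. Since $p\nmid\lambda$ we have $\lambda-p+1\not\equiv 1\pmod p$, and since $\lambda>p$ the integer $\lambda-p+1$ is at least $2$; tracking the borrow produced by the $-1$ shows that the base-$p$ digit in position $i$ is nonzero mod $p$, which is exactly what is required. The case $p=2$ is structurally the same, except that the analog of this binomial coefficient \emph{does} vanish mod $2$ when $\lambda\equiv 1\pmod 4$; this is precisely why the statement must split into the two residues and use a different Steenrod operation in each sub-case, and the parallel calculation produces a binomial coefficient that is now nonzero by a short base-$2$ digit check.

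The main obstacle is the bookkeeping in the second paragraph: expressing a monomial symmetric polynomial in the $e_\mu$-basis and cleanly extracting a single coefficient is notoriously messy, with many intermediate cancellations among products of power sums. A possibly cleaner alternative, which I would fall back on if the direct recursion proves unwieldy, is to work with the total Steenrod power via the generating-function identity $P\!\left(\sum_m c_m t^m\right)=\prod_l\bigl(1+t(x_l+x_l^p)\bigr)$, extract the coefficient of $t^m$ in the appropriate cohomological degree, and read off the $e_k$-contribution as a residue — at which point the problem again reduces to checking nonvanishing of one binomial coefficient modulo $p$.
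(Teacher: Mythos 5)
Your proposal is correct in substance, but it reaches the corollary by a genuinely different route than the paper. The paper deduces the corollary in one line from Lemma \ref{polylemma}, and proves that lemma by the self-contained lexicographic induction of Appendix A, using the ``matching'' calculus to show that \emph{every} monomial symmetric polynomial in the relevant degrees is hit modulo products --- a stronger, basis-by-basis statement. You instead isolate the single coefficient that matters: the coefficient of $e_k$ in $P^{p^i}(e_m)=m_{(p^{p^i},1^{m-p^i})}$ with $m=(\lambda-p+1)p^i$. This works, and your generating-function fallback is in fact the clean way to do it: applying the specialization $e_j\mapsto 0$ for $j<k$, $e_k\mapsto 1$ to $\prod_l(1+tx_l+sx_l^p)$ and using the Newton relation gives the coefficient as $\pm\tfrac{k}{m}\binom{m}{p^i}=\pm\bigl(\binom{m-1}{p^i}+p\binom{m-1}{p^i-1}\bigr)\equiv\pm\binom{m-1}{p^i}\pmod p$, and your Lucas digit check correctly evaluates this to $\pm\lambda\not\equiv 0$. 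This is precisely the Wu/Brown--Peterson leading coefficient that the paper itself cites in the proof of Lemma \ref{appalemma} as the alternative to its elementary argument; your route buys brevity and a conceptual closed formula at the cost of importing standard symmetric-function identities, while the paper's route is longer but avoids them and classifies all generators, not just $c_k$.

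Two small corrections. First, carry out the coefficient extraction over $\Z$ and reduce mod $p$ only at the end: under the specialization killing $e_1,\dots,e_{k-1}$ one has $p_k=(-1)^{k-1}k$, which is $0$ mod $p$ whenever $i\ge 1$, so Newton's identities applied directly over $\Zp$ degenerate (likewise, your recursion $p_p\cdot m_{(p^{j-1},1^{m-j})}=j\,m_{(p^j,1^{m-j})}+\dots$ carries a multiplicity $j$ you must track). Second, your claim that the relevant binomial vanishes mod $2$ for $\lambda\equiv 1\pmod 4$ is not correct: the $i$-th binary digit of $(\lambda-1)2^i-1=(\lambda-2)2^i+(2^i-1)$ equals $\lambda\bmod 2=1$ for every odd $\lambda$, so $P^{2^i}(c_{(\lambda-1)2^i})$ contains $c_{\lambda 2^i}$ with coefficient $1$ in all cases (e.g.\ $Sq^2(c_4)=c_5+c_1c_4$). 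The mod-$4$ dichotomy in Lemma \ref{polylemma} is an artifact of the paper's induction passing through the powers $((n)^{2^l})$ rather than through $c_k$ itself; on your direct route the case split disappears, which only makes your argument simpler.
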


Some preliminaries:

Chern classes of complex vector bundles over arbitrary manifolds are defined as pullbacks of the universal Chern classes $c_k$ in the cohomology of the classifying space $BU(k)$, where $k$ is the rank of the vector bundle, by a classifying map $M \to BU(k)$ of the vector bundle.

In order to prove the claim for arbitrary vector bundles, it is therefore enough to verify it in the cohomology ring of $BU(k)$. To simplify things further, there are maps, induced by the inclusion $U(k) \into U(k+1)$, from $BU(k)$ to $BU(k+1)$, that are surjective on cohomology. Passing to the (co-)limit, we obtain from the inclusion maps $U(k)\into U$ maps $BU(k)\to BU$ that are surjective on cohomology. It therefore suffices to verify the claim of the Lemma in the cohomology ring $H^*(BU, \Zp)$.

Using the map $\bigoplus U(1) \into U$ gives us a map $H^*(BU, \Zp) \into \Pi H^*(BU(1),\Zp)=\Pi_{i\in\mathbb{N}}\Zp[x_i]$.

This map identifies $H^*(BU, \Zp)$ injectively with the ring of symmetric polynomials on countably many variables $x_i$ in degree $|x_i|=2$, embedded in the ring $\Pi_{i\in\mathbb{N}}\Zp[x_i]$ of bounded degree formal sums of polynomials on those variables.\\

We will work in this framework as it has several nice properties:\\

If we identify $H^*(BU(k))$ with its image via the inclusion $\bigoplus_{i=1, ..., k} U(1)\into U(k)$ in $H^*(B\big(\bigoplus_{i=1, ..., k} U(1)\big))=\Zp[x_1, ..., x_k]$, we recover $H^*(BU(k))$ as the ring of symmetric polynomials on $k$ variables $x_i$ in degree $|x_i|=2$ and find that the restriction map $H^*(BU)\to H^*(BU(k))$ is given by setting all variables except the first $k$ to zero.\\

The $k$-th Chern class $c_k$ is given by the $k$-th elementary symmetric function on the $x_i$.\\

The action of the Steenrod algebra $\mathcal{A}_p$  can be obtained from the relations\\ $P^i(x_k)= \begin{cases}x_k, i=0\\ x_k^p, i=1\\ 0, \text{ otherwise}\end{cases}$
by use of the Cartan formula.\\

In the following, we will need to do some computation involving symmetric polynomials. To this end, we shall require some compact notation for symmetric polynomials and a calculus for determining products of two such polynomials.

\subsection{Notation for symmetric polynomials}
\,\\
For the symmetric polynomial obtained by permuting the variables of $x_1^{n_1}\dots x_r^{n_k}$ such that the coefficient of $x_1^{n_1}\dots x_k^{n_k}$ remains one, we write $(n_1, ..., n_k)$. We write $((n_1)^{r_1}, ..., (n_k)^{r_k})$ for $(n_1, ..., n_1, ..., n_k, ..., n_k)$, where each $n_i$ is repeated $r_i$ times.

\begin{Ex}\,\\
\begin{itemize}
\item $(1)=x+y+z+...$, 
\item $(2)=x^2+y^2+z^2+...$, 
\item$((1)^2)=(1,1)=xy+xz+yz+...$.\\

\item$(1)\cdot(1)=(x+y+z+...)(x+y+z+...)=x^2+y^2+z^2+2xy+2yz+2xz+...=(x^2+y^2+z^2+...)+2(xy+xz+yz+...)
=(2)+2(1,1)$.\\

\item $c_k = x_1\cdot ... \cdot x_k + ... = (1, ..., 1)=((1)^k)$.

\end{itemize}
\end{Ex}

When using notation like $((n_1)^{r_1}, ..., (n_k)^{r_k})$, we shall always assume $n_i > n_{i+1}$. When using notation of the form $(n_1, ..., n_k)$, we will assume only $n_i \ge n_{i+1}$.\\

It is clear that polynomials of the form $(n_1, ..., n_k)$ give an additive basis of the space of symmetric polynomials. We put an order on this basis lexicographically from the left, i.e. $(n_1, ..., n_k)\le (m_1, ..., m_l)$ if $n_1 < m_1$ or $n_1=m_1$ and $(n_2, ..., n_k)\le (m_2, ..., m_l)$. Here all strings should be considered extended to the right infinitely by zeroes.

\subsection{Computation of Products in $H^*(BU)$}\,\\

In this section we want to describe a way to calculate products of symmetric polynomials in a reasonably compact way. Doing this is, in practice, usually labor-intensive at least for complicated polynomials. However, the results of this section will be enough to let us prove some structural results regarding multiplication in $H^*(BU, \Zp)$ that will suffice for our purposes.\\

Let $p_n=(n_1, ..., n_k), p_m=(m_1, ..., m_r)$.

Multiplying these out, one sees immediately that possible contributions in the product come in the form of $x_{\iota_1}^{n_1}...x_{\iota_k}^{n_k}x_{\sigma_1}^{m_1}... x_{\sigma_r}^{m_r}$, where the $\iota$ and likewise the $\sigma$ are pairwise distinct, but possibly with pairs $\iota_i=\sigma_j$. Since the result of $p_n p_m$ is necessarily a symmetric polynomial, we can restrict our attention to terms where the set of occurring $x_i$ is $\{1, ..., i_{max}\}$. We then write
\begin{align*}
x_{\iota_1}^{n_1}...x_{\iota_k}^{n_k}x_{\sigma_1}^{m_1}... x_{\sigma_r}^{m_r} \cong \begin{bmatrix}
a_{kj}
\end{bmatrix}_{k\le 2, j\le\max(\sigma_*, \iota_*)},
\end{align*}
where $a_{1j}=n_i$ if $\iota_i=j$ and $0$ otherwise, and $a_{2j}=m_i$ if $\sigma_i=j$ and $0$ otherwise. 

The notation should become clear when looking at an example: In $(2,1)\cdot(1)$, there is a contribution $x_1^2 x_2 \cdot x_2$, which we would write as $\begin{bmatrix}
2 & 1\\ 0 & 1
\end{bmatrix}$, since $x_1$ comes with exponent 2 from $(2,1)$ and 0 from $(1)$ and $x_2$ comes with exponent 1 from each. Here the other contributions are
\begin{align*}
x_1^2 x_2 \cdot x_3 \cong \begin{bmatrix} 2&1&0\\0&0&1\end{bmatrix}, x_1^2 x_2 \cdot x_1 \cong \begin{bmatrix}2&1\\1&0\end{bmatrix},
 \end{align*}
and terms obtained by permuting variables, like $x_1 x_2^2 \cdot x_1 \cong \begin{bmatrix}1&2\\1&0\end{bmatrix}$. Note that permutation of variables on the left corresponds to permutations of columns on the right.\\

We shall call contributing terms written in this form \textit{matchings}. Our work will be in enumerating all matchings, or more precisely equivalence classes of matchings, that contribute to the product polynomial, and to determine how many matchings fall in a given class, where we say two matchings are in the same class if they are the same up to permutation of variables. For another example, in the product $(1)\cdot(1)$ we want there to be two contributing classes $x_1 \cdot x_1 \cong \begin{bmatrix}1\\1\end{bmatrix}$ and $x_1 \cdot x_2 \cong \begin{bmatrix}1 & 0 \\ 0 & 1\end{bmatrix}$, and, since we know $(1)(1)=(2)+2(1,1)$, we want a way to obtain the coefficients $1, 2$ from $\begin{bmatrix}1\\1\end{bmatrix}$ and $\begin{bmatrix}1 & 0\\ 0&1\end{bmatrix}$.
Note here that the matching class of $\begin{bmatrix}1\\1\end{bmatrix} \cong x_1\cdot x_1$ produces in the product (up to coefficient) a term of type $x_1^2 + ... =(2)$, while the matching $\begin{bmatrix}1&0\\0&1\end{bmatrix} \cong x_1 \cdot x_2$ produces one of type $x_1 x_2+... =(1,1)$. This is clear from the definitions: A matching $\begin{bmatrix}n_{a_i}\\ m_{b_i}\end{bmatrix}_{i}$ produces a term of type $(n_{a_1}+m_{b_1}, n_{a_2}+m_{b_2}, ...)$, where we take the convention that $n_i=0$ for $i>k$, $a_i, b_i$ are permutations of $\mathbb{N}$ and we ignore columns $\begin{bmatrix}0\\0\end{bmatrix}$ in matching notation.

As we want our polynomials to be sorted in $(o_1, ... )$-notation, a natural choice for a canonical representative of a given matching class should satisfy $n_{a_i}+m_{b_i}\ge n_{a_{i+1}}+m_{b_{i+1}}$. To finally completely fix a permutation of the columns, we shall require $n_i \ge n_{i+1}$ within each block where $n_{a_i}+m_{b_i}$ is constant. It is clear that this defines a unique permutation of columns (where we do not distinguish equal columns), thereby giving a canonical representative in matching notation of each class.\\

Now, let us continue with determining the total coefficient of $(\hat{o}_i=n_{a_i}+m_{b_i})_i$ coming from all matchings with a shared canonical form $\begin{bmatrix}n_{a_i}\\m_{b_i}\end{bmatrix}_i$. It is clear from the definition that the coefficient of $(\hat{o}_i)_i$ in the product is the same as the coefficient of $x_1^{\hat{o}_1}x_2^{\hat{o}_2}\cdot ...$.

Therefore we need to count the number of matchings in our given class with $n_{\tilde{a}_i}+m_{\tilde{b}_i}=\hat{o}_i$. These are the matchings that have the same blocks (sets of columns with constant $n_{\tilde{a}_i}+m_{\tilde{b}_i}=\hat{o}_i$) as the canonical form, but where the columns may be permuted within each block. Let $r_{i,j}$ be the number of columns equal to $\begin{bmatrix}i\\j\end{bmatrix}$ and $(\hat{o}_i)_i=(o_i^{r_i})_i$, where now the $o_i$ are distinct. Then the number of such permutations for the block with sum $o_i$ is $\begin{pmatrix}
r_i \\ (r_{a,b} | a+b=o_i)
\end{pmatrix}$ where we use this notation to denote the multinomial coefficient $\begin{pmatrix}
n \\ (k_1, ..., k_l)
\end{pmatrix}=\frac{n!}{k_1!...k_l!}$, where $\sum k_i=n$. Here the perhaps more familiar case of binomial coefficients is included as $\begin{pmatrix}
n \\ k
\end{pmatrix}=\begin{pmatrix}
n \\ (k, n-k)
\end{pmatrix}$.
The total number of permutations contributing is then the product of these numbers for the individual blocks, i.e. $\prod_i \begin{pmatrix}
r_i \\ (r_{a,b} | a+b=o_i)
\end{pmatrix}$.

\begin{Ex}\,\\
\begin{itemize}
\item $(1)\cdot(1)=\begin{bmatrix}1\\1\end{bmatrix}+\begin{bmatrix}1&0\\0&1\end{bmatrix} = \begin{pmatrix}
1\\1 \end{pmatrix} (2) + \begin{pmatrix}
2\\ 1,1 \end{pmatrix} (1,1) = (2) + 2(1,1)$.
\item $(2,1,1)\cdot (2,1)\\=\begin{bmatrix}2&1&1&0&0\\0&0&0&2&1\end{bmatrix}+\begin{bmatrix}2&1&1&0&0\\0&1&0&2&0\end{bmatrix}+\begin{bmatrix}2&1&1&0&0\\0&2&0&1&0\end{bmatrix}+\begin{bmatrix}2&1&1&0&0\\0&2&1&0&0\end{bmatrix}\\+\begin{bmatrix}2&1&1&0&0\\1&0&0&2&0\end{bmatrix}+\begin{bmatrix}2&1&1&0&0\\1&2&0&0&0\end{bmatrix}+\begin{bmatrix}2&1&1&0&0\\2&0&0&1&0\end{bmatrix}+\begin{bmatrix}2&1&1&0&0\\2&1&0&0&0\end{bmatrix}\\
=\begin{bmatrix}2&0&1&1&0\\0&2&0&0&1\end{bmatrix}+\begin{bmatrix}2&1&0&1\\0&1&2&0\end{bmatrix}+\begin{bmatrix}1&2&1&0\\2&0&0&1\end{bmatrix}+\begin{bmatrix}1&2&1\\2&0&1\end{bmatrix}+\begin{bmatrix}2&0&1&1\\1&2&0&0\end{bmatrix}\\+\begin{bmatrix}2&1&1\\1&2&0\end{bmatrix}+\begin{bmatrix}2&1&1&0\\2&0&0&1\end{bmatrix}+\begin{bmatrix}2&1&1\\2&1&0\end{bmatrix}\\
=6(2^2\,1^3)+6(2^3\,1)+2(3\,2\,1^2)+2(3\,2^2)+(3 \,2\, 1^2)+2(3^2\, 1)+3(4\, 1^3)+ (4\,2\,1)\\
=(4\,2\,1) + 3(4\, 1^3) + 2(3^2\, 1) + 2(3\,2^2) + 3(3 \,2\, 1^2) + 6(2^3\,1) + 6(2^2\,1^3)$\\
\end{itemize}
Note here two things: First, different matchings can contribute to the same term in the product, here $(3\,2\,1^2)$. Second, note that while the 'canonical' form of a matching is most convenient to determine coefficients, for the enumeration of all occurring matchings it is much more convenient to instead leave the top row sorted, and break ties by sorting the bottom. Then one can get all occurring matchings by iterating through all partitions of the bottom row into descending subsets like above, and then go to canonical form by sorting and deleting zero columns.\\
\end{Ex}

As a final note in this section, sometimes one is not interested in calculating the full product, but rather only in certain coefficients in the result (e.g. one may only want to determine the coefficient of $(3\,2\,1^2)$ in the above example without calculating all terms. In this case, one needs only to find all matchings that add up to the desired result column-wise, i.e. in the example the columns would have to add up to $(3\,2\,1\,1)$. This tends to be less difficult in the relevant situations, as one can use reasoning to find these matchings, i.e. here one sees quickly that any matching must start either $\begin{bmatrix}
1&2&*\\2&0&*
\end{bmatrix}$ or $\begin{bmatrix}
2&0&*\\1&2&*
\end{bmatrix}$ and find that the contributing terms are $\begin{bmatrix}1&2&1&0\\2&0&0&1\end{bmatrix}+\begin{bmatrix}2&0&1&1\\1&2&0&0\end{bmatrix}=3(3\,2\,1^2)$. Reasoning like this will be used extensively in the following proofs, where usually only single coefficients need to be calculated explicitly.\\

\subsection{Proofs of the results}\,\\
As an application, we prove

\begin{Lemma}
For every $p,l$, $S_l:=\{((n_1)^{r_1},..., (n_k)^{r_k})| \exists i: p^l\nmid r_i\}$ generates additively an ideal of $H^*(BU, \Zp)$.
\end{Lemma}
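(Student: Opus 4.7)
The plan is to show that the additive span of $S_l$ is closed under multiplication by arbitrary elements of $H^*(BU, \Zp)$. Let $T_l := \{((n_1)^{r_1},\ldots,(n_k)^{r_k}) \mid p^l \mid r_i \text{ for all } i\}$ be the complementary subset, so that $S_l \cup T_l$ is the monomial symmetric basis of $H^*(BU, \Zp)$. It then suffices to prove the following coefficient statement: for every basis element $s \in S_l$ and every basis element $h$, the coefficient of every $t \in T_l$ in the product $s \cdot h$ is divisible by $p$.

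Using the matching calculus developed in the previous subsection, the coefficient of a fixed output $t=((o_1)^{q_1},\ldots,(o_u)^{q_u})$ in $s\cdot h$ is a sum, over all matchings with canonical output $t$, of $\prod_{o}\binom{q_o}{(c_{a,b}\mid a+b=o)}$, where $c_{a,b}$ is the number of columns $\binom{a}{b}$ in the given matching. My strategy is to prove the stronger statement that each individual such matching contributes a coefficient divisible by $p$; no cancellation between matchings is required.

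Fix such a matching and assume for contradiction that every multinomial factor is nonzero modulo $p$. By the multinomial version of Lucas's theorem, this means that for each output value $o$, the column counts $\{c_{a,b} : a+b=o\}$ add to $q_o$ with no carries in base $p$. Since $t\in T_l$ forces $p^l \mid q_o$, the bottom $l$ base-$p$ digits of $q_o$ all vanish, and the no-carry condition then forces each $c_{a,b}$ appearing in the block for $o$ to have its own bottom $l$ digits equal to zero, i.e. $p^l \mid c_{a,b}$. Hence every column count $c_{a,b}$ appearing anywhere in the matching is a multiple of $p^l$.

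But now, writing $s=((n_1)^{r_1},\ldots,(n_k)^{r_k})$ and fixing an index $i_0$ witnessing $s\in S_l$ (so $p^l \nmid r_{i_0}$), the top-row constraint of the matching reads $r_{i_0}=\sum_{b} c_{n_{i_0},b}$; as a sum of multiples of $p^l$, this gives $p^l\mid r_{i_0}$, the desired contradiction. The main obstacle is arranging the Lucas-type divisibility bookkeeping across all blocks of the matching simultaneously, but once it is phrased inside the matching calculus, the argument is transparent and requires neither an enumeration of matchings nor any cancellation of coefficients across matchings.
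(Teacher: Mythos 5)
Your proof is correct and follows essentially the same route as the paper: both arguments show that each individual matching contributes a coefficient divisible by $p$, by combining the Kummer/Lucas carry criterion for the block multinomials (whose totals are divisible by $p^l$ since the target lies outside $S_l$) with the row-sum constraint that the columns with top entry $n_{i_0}$ number $r_{i_0}\not\equiv 0 \pmod{p^l}$. The paper phrases this directly (exhibiting one column count not divisible by $p^l$, hence one vanishing multinomial factor) rather than by contradiction, but the content is identical.
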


\begin{proof}
Let $p_n=((n_1)^{r_1},..., (n_k)^{r_k})\notin S_l$, i.e. $p^l|r_i$ for all $i$. \\
Let $p_m=((m_1)^{s_1},..., (m_l)^{s_l})\in S_l$ and consider the coefficient of $p_n$ in the product of $p_m$ with some other basis polynomial $p_o$. Let $i$ be chosen such that $p^l\nmid s_i$.\\
For every contributing matching we consider the columns of the form $\begin{bmatrix}
m_i\\
o_j
\end{bmatrix}$, where $j$ may vary. Of course there are in total $s_i$ of these. In particular, one of these occurs a number of times $r_{i,j}$ not divisible by $p^l$. This implies the coefficient coming from this matching comes with a factor of $\begin{pmatrix}r\\
..., r_{i,j}, ...
\end{pmatrix}$, where $r$ is the multiplicity of $m_i+o_j$ in $p_n$ and the bottom ranges over $r_{a,b}$ with $m_a+o_b=m_i+o_j$. However, since $p^l|r$ by assumption on $p_n$ and $p^l\nmid r_{i,j}$ by construction of $i,j$, this multinomial coefficient is zero mod $p$.

In total we see that every contribution comes with a factor of zero, so $p_n$ does not appear in the product of $p_m$ with any symmetric polynomial.
\end{proof}

\begin{proof}[Proof of Lemma \ref{polylemma}\,]\,\\
We prove the following:
Let $p_n=((n_1)^{r_1},..., (n_k)^{r_k})$ be any polynomial in degree $2\sum n_i r_i=2\hat{\lambda} p^{\hat{l}}, p\nmid \hat{\lambda} > p$. Then $p_n$ is, modulo products of lower degree elements, in the image of $P^{p^{\hat{l}}}$.

Since $H^{|p_n|-2(p-1)p^{\hat{l}}}(BU, \Zp)$ is generated by the Chern class in this degree and products, and $P^*$ of a product decomposes as a sum of products by the Cartan formula, this implies the Lemma as stated.\\

Suppose $p_n=((n_1)^{r_1},..., (n_k)^{r_k})$ is a lexicographically maximal counterexample of minimal degree. We perform a sequence of reductions:\\

If $k>1$, consider $((n_1)^{r_1},..., (n_{k-1})^{r_{k-1}})\cdot ((n_k)^{r_k})=((n_1)^{r_1},..., (n_k)^{r_k})+R$, where all terms in $R$ are lexicographically larger than $p_n$. \\
We may assume $k=1, ((n_1)^{r_1})=:((n)^r)$.\\

Write $r=\lambda p^l + \hat{r}$, where $0 \ne \lambda \le p-1$, $p^{l+1}|\hat{r}$.\\
If $\hat{r}\ne 0$, consider $((n)^{\hat{r}})((n)^{\lambda p^l})=((n)^r)+R$ with terms in $R$ that are lexicographically larger than $((n)^r)$. We may assume $\hat{r}=0$.\\

If $l\ne \hat{l}$, then since $|p_n|=2\lambda p^l n = 2\hat{\lambda}p^{\hat{l}}$ and $p$ does not divide $\lambda$ or $\hat{\lambda}$, we must have $p|n$. \\
In this case $((n)^r)=((\frac{n}{p})^r)^p$ by $\Zp$-linearity of the Frobenius-homomorphism. 

In particular we claim this shows $S_k$ is generated by products for $k<\hat{l}+1$: If $p_n\in S_k$ for such a $k$, one may check to see the product terms above are also in $S_k$, which means so are the terms in the $R$ at each step. Since $l+1=min\{k |\,\, p_n \in S_k\}$, all these terms arrive in the case $l\ne \hat{l}$ by the assumption $k\ne \hat{l}+1$ and are seen to be products.\\

We return to the proof and may now assume $l=\hat{l}$. We reduce to $\lambda=1$ by considering \\
$((n)^{p^l})^{\lambda}=\begin{pmatrix}
\lambda p^l \\ p^l, ..., p^l
\end{pmatrix}((n)^{\lambda p^{l}})+R = \lambda! ((n)^{\lambda p^{l}})+R$  with lexicographically larger $R$ if $\lambda >1$. (Note that $\lambda! \ne 0$ mod $p$ if and only if $\lambda <p$).\\

Since $nr=np^l=\hat{\lambda}p^l$, we may write $n=k_n p + \lambda_n$ for some $k_n\ge 1, 0<\lambda_n \le p-1$.\\
If $\lambda_n \ne p-1$, we consider\\
$P^{p^l}(((n-p+1)^{p^l})) = \underline{(n-p+1)^{p^l}} ((n)^{p^l})=\underline{(n-p+1)} ((n)^{p^l})$, where the first equality holds modulo $S_l$ and underscored terms are supposed to be read as scalars.\\

For the remaining case $n= k_n p + p - 1$ we must consider the cases of odd and even primes separately: \\If $p$ is odd, we have
\begin{align*}
P^{2p^l}((k_n p -(p-1))^{p^l}) = \begin{pmatrix}
p-1 \\ 2
\end{pmatrix} ((n)^{p^l}) + R, \text{ if } k_n\ge 2,\\
P^{p^l}((1)^{p^{l+1}})=((p)^{p^l}(1)^{(p-1)p^l}) = ((n)^{p^l}) + R , \text{ if } k_n=1.
\end{align*}
Here the $R$-terms are in $S_l$ in the first line, and are products or in $S_l$ in the second. Note here that the image of $P^{2p^l}$ lies in the image of $P^{p^l} \mod S_l$ since by the Adem relation for $P^{p^l}\circ P^{p^l}$ we have $P^{2p^l}=\frac{1}{2} P^{p^l}\circ P^{p^l} + \sum \alpha_{a,b}P^a\circ P^b$, where $p^l$ divides none of $a,b$, which implies all terms in the sum have image in $S_l$.\\

For $p=2$, 
\begin{align*}
&\text{if } 2k_n+1 =1 \mod 4, ((2k_n+1)^{2^l}) = P^{2\cdot 2^l}((2k_n-1)^{2^l}),\\
&\text{if } 2k_n+1 =3 \mod 4, ((2k_n+1)^{2^l}) = P^{2^l}((k_n)^{2\cdot 2^l}) + ((k_n)^{2^l})((k_n)^{2^l} (1)^{2^l}) + R,
\end{align*}
where terms in $R$ are in $S_l$.
\end{proof}

\renewcommand*{\arraystretch}{1}

\newpage
\section{Appendix B}
In this section we show how we classified the isotropy weight sets of effective $T^4$-actions without finite isotropy groups (near a chosen fixed point) to arrive at:\\

\begin{table}[h!]
\renewcommand*{\arraystretch}{0.9}
  \begin{center}
    \caption*{Table \ref{tab:t4list}. List of connected isotropy $T^4$ weight sets}
    \begin{tabular}{r|c|l} 
      \textbf{Size} & \textbf{weights not including standard basis} & \textbf{Splitting $T^3$?}\\
      \hline
      4 & $\emptyset$ & \checkmark\\
      5 & $\begin{pmatrix}1\\ 0\\ 0\\ -1\end{pmatrix},\begin{pmatrix}1\\ 0\\ -1\\ -1\end{pmatrix},\begin{pmatrix}1\\ -1\\ -1\\ -1\end{pmatrix}$ & \checkmark\\
      6 & $\begin{pmatrix}1 & 1\\ 0 & 0\\ 0 & -1 \\ -1 & 0\end{pmatrix},\begin{pmatrix}1 & 1\\ 0 & -1\\ 0 & -1 \\ -1 & 0\end{pmatrix},\begin{pmatrix}1 & 1\\ 0 & -1\\ -1 & 0 \\ -1 & -1\end{pmatrix},\begin{pmatrix}1 & 0\\ -1 & 0\\ 0 & 1 \\ 0 & -1\end{pmatrix}$ & all except last\\
      7 & $\begin{pmatrix}1 & 1 & 1\\ 0 & 0 & 0\\ 0 & -1 & -1 \\ -1 & 0 & -1\end{pmatrix}, \begin{pmatrix}1 & 1 & 1\\ 0 & 0 & -1\\ 0 & -1 & 0 \\ -1 & 0 & 0\end{pmatrix}, \begin{pmatrix}1 & 1 & 1\\ 0 & 0 & -1\\ 0 & -1 & 0 \\ -1 & 0 & -1\end{pmatrix}, \begin{pmatrix}1 & 1 & 1\\ 0 & 0 & -1\\ 0 & -1 & -1 \\ -1 & 0 & -1\end{pmatrix}$ & \checkmark\\
      8 & $\begin{pmatrix}1 & 1 & 1 & 1\\ 0 & 0 & 0 & -1\\ 0 & -1 & -1 & 0 \\ -1 & 0 & -1 & 0\end{pmatrix} , \begin{pmatrix}1 & 1 & 1 & 1\\ 0 & 0 & -1 & -1\\ 0 & -1 & 0 & -1 \\ -1 & 0 & -1 & 0\end{pmatrix}$ & \checkmark\\
      9 & $\begin{pmatrix} 1 & 1 & 1 & 1 & 1\\ 0 & 0 & 0 & -1 & -1\\ 0 & -1 & -1 & 0 & 0 \\ -1 & 0 & -1 & 0 & -1\end{pmatrix}, \begin{pmatrix} 1 & 1 & 1 & 1 & 1\\ 0 & 0 & -1 & -1 & -1\\ 0 & -1 & 0 & -1 & -1 \\ -1 & 0 & -1 & 0 & -1\end{pmatrix}$ & \checkmark\\
      10 & $\begin{pmatrix} 0 & 0 & 0 & 1 & 1 & 1\\ 0 & 1 & 1 & 0 & 0 & -1\\ 1 & 0 & -1 & 0 & -1 & 0 \\ -1 & -1 & 0 & -1 & 0 & 0\end{pmatrix}$ & $\times$
    \end{tabular}
  \end{center}
\renewcommand*{\arraystretch}{1}
\end{table}

In order to achieve this, we identify the weight space with $\Z^4$. Change of identification $Hom(T^4, S^1)\equiv \Z^4$ corresponds to basis change on the right side.\\

We call the set of weights of the isotropy representation $\Omega$. The condition of effectivity of the action now corresponds to the span of $\Omega$ having full rank in $\Z^4$. The condition on isotropy groups means that any rank 4 sublattice of $\Z^4$ spanned by elements of $\Omega$ has index 1. In particular, the span of $\Omega$ itself has index 1, so $\Omega$ contains a basis of $\Z^4$. We may assume $\Omega$ contains the standard basis.\\

The above condition on sublattices is equivalent to the condition that for any four $\omega_1, ..., \omega_4 \in \Omega$, $\det(\omega_1, ..., \omega_4)\in \{-1,0,1\}$ ($\star$). This is what we will actually check for later. Note that if $\Omega$ contains the standard basis, this condition immediately implies the entries of $\omega\in \Omega$ can only be $0$ or $\pm 1$.\\

Furthermore one should note that, if $\Omega$ satisfies these conditions, then any set $\bar{\Omega}$ with $\{e_i\} \subset \bar{\Omega} \subset \Omega$ does as well. This implies all $\Omega$ with these properties can be obtained by iteratively adding single elements to the standard basis.\\

We use the following algorithm to classify all such $\Omega$:\\

\begin{enumerate}
\item Assume we have a list $L_k$ of all sets $\Omega$ containing the standard basis satisfying ($\star$) containing exactly $k$ elements, starting at $k=4, L_4=\{\{e_i|1\le i\le 4\}\}$.
\item After putting an order on $V:=\{-1,0,1\}^4\backslash \{\pm e_i, 0\}$, check ($\star$) for $\Omega \cup v$ for each $\Omega \in L_k, v\in \{v\in V, v>\omega  \, \forall \omega \in \Omega\}$. If so, add $\Omega \cup v$ to a new list $\hat{L}_{k+1}$.
\item check the $\hat{\Omega}\in \hat{L}_{k+1}$ for isomorphisms by basis change. If multiple $\hat{\Omega}$ belong to the same isomorphism class, keep only the representative minimal with regard to the lexicographic ordering induced by the chosen order on $V$.
\item The obtained list $L_{k+1}$ will now contain a lexicographically minimal representative of each isomorphism class of sets $\hat{\Omega}$ containing the standard basis and satisfying ($\star$) with exactly $k+1$ elements.
\item Repeat the above until $L_{k}=\emptyset$.
\end{enumerate}

Implementing this algorithm in code, one computationally generates a complete classification of such $\Omega$ agreeing with \ref{tab:t4list}.


\begin{Rem} We also make claims about which actions "contain a splitting $T^3$", i.e. admit a quotient $T^4/S^1$ whose isotropy near $p\in \Fix(S^1)$ splits as a direct sum of three circle actions. The weight set of such a quotient action is given (under suitable identifications) by the intersection of the original weight set with the 3-dimensional subspace of weights that contain the lie-algebra of the $S^1$ in their kernel. Looking for a "splitting $T^3$" now corresponds to finding a 3-dimensional subspace whose intersection with the weight set is exactly three linearly independent weights.\\
In our case, the ordering of $V$ is chosen in such a way that, if there is such a subspace, the lexicographically minimal representative of any isomorphism class will have splitting $\{e_2, e_3, e_4\}$. Checking whether this indeed gives a splitting $T^3$ of course only amounts to verifying that the first coordinate of all other weights is non-zero, which can be done by eye. It only remains to check that the two weight sets where this is not the case really do not admit any splitting $T^3$, which is not difficult.
\end{Rem}

\clearpage 

It is also possible to give Table 1 in a basis that makes all entries positive:\\

\begin{table}[h!]
  \begin{center}
    \caption{List of connected isotropy $T^4$ weight sets in alternate basis}
    \label{tab:t4listalt}
    \begin{tabular}{l|c|r} 
      \textbf{Size} & \textbf{weights not including standard basis} & \textbf{Splitting $T^3$?}\\
      \hline
      4 & $\emptyset$ & \checkmark\\
      5 & $\begin{pmatrix}1\\ 0\\ 0\\ 1\end{pmatrix},\begin{pmatrix}1\\ 0\\ 1\\ 1\end{pmatrix},\begin{pmatrix}1\\ 1\\ 1\\ 1\end{pmatrix}$ & \checkmark\\
      6 & $\begin{pmatrix}1 & 1\\ 0 & 0\\ 0 & 1 \\ 1 & 0\end{pmatrix},\begin{pmatrix}1 & 1\\ 0 & 1\\ 0 & 1 \\ 1 & 0\end{pmatrix},\begin{pmatrix}1 & 1\\ 0 & 1\\ 1 & 0 \\ 1 & 1\end{pmatrix},\begin{pmatrix}1 & 0\\ 1 & 0\\ 0 & 1 \\ 0 & 1\end{pmatrix}$ & all except last\\
      7 & $\begin{pmatrix}1 & 1 & 1\\ 0 & 0 & 0\\ 0 & 1 & 1 \\ 1 & 0 & 1\end{pmatrix}, \begin{pmatrix}1 & 1 & 1\\ 0 & 0 & 1\\ 0 & 1 & 0 \\ 1 & 0 & 0\end{pmatrix}, \begin{pmatrix}1 & 1 & 1\\ 0 & 0 & 1\\ 0 & 1 & 0 \\ 1 & 0 & 1\end{pmatrix}, \begin{pmatrix}1 & 1 & 1\\ 0 & 0 & 1\\ 0 & 1 & 1 \\ 1 & 0 & 1\end{pmatrix}$ & \checkmark\\
      8 & $\begin{pmatrix}1 & 1 & 1 & 1\\ 0 & 0 & 0 & 1\\ 0 & 1 & 1 & 0 \\ 1 & 0 & 1 & 0\end{pmatrix} , \begin{pmatrix}1 & 1 & 1 & 1\\ 0 & 0 & 1 & 1\\ 0 & 1 & 0 & 1 \\ 1 & 0 & 1 & 0\end{pmatrix}$ & \checkmark\\
      9 & $\begin{pmatrix} 1 & 1 & 1 & 1 & 1\\ 0 & 0 & 0 & 1 & 1\\ 0 & 1 & 1 & 0 & 0 \\ 1 & 0 & 1 & 0 & 1\end{pmatrix}, \begin{pmatrix} 1 & 1 & 1 & 1 & 1\\ 0 & 0 & 1 & 1 & 1\\ 0 & 1 & 0 & 1 & 1 \\ 1 & 0 & 1 & 0 & 1\end{pmatrix}$ & \checkmark\\
      10 & $\begin{pmatrix} 1 & 0 & 0 & 1 & 0 & 1\\ 1 & 1 & 0 & 1 & 1 & 1\\ 0 & 1 & 1 & 1 & 1 & 1 \\ 0 & 0 & 1 & 0 & 1 & 1\end{pmatrix}$ & $\times$
    \end{tabular}
  \end{center}
\end{table}

\clearpage

\clearpage

\printbibliography

\end{document}